\pgfplotsset{compat=newest}
\newcommand{\N}{\ensuremath{\mathbb{N}}}
\newcommand{\T}{\ensuremath{\mathbb{T}}}
\newcommand{\Z}{\ensuremath{\mathbb{Z}}}
\newcommand{\R}{\ensuremath{\mathbb{R}}}
\newcommand{\C}{\ensuremath{\mathbb{C}}}
\newcommand{\ii}{\textnormal{i}}
\newcommand{\e}{\textnormal{e}}
\newcommand{\ceil}[1]{\left\lceil#1\right\rceil}
\newcommand{\floor}[1]{\left\lfloor#1\right\rfloor}
\newcommand{\zb}[1]{\ensuremath{\boldsymbol{#1}}}
\renewcommand{\ln}{\mathrm{ln\,}}
\DeclareMathOperator*{\argmin}{arg\,min}
\DeclareMathOperator*{\diag}{diag}
\newcommand{\cond}{\mathrm{cond}}
\newtheorem{theorem}{Theorem}[section]
\newtheorem{lemma}[theorem]{Lemma}
\newtheorem{remark}[theorem]{Remark}
\newtheorem{generalisation}[theorem]{Generalisation}
\newtheorem{definition}[theorem]{Definition}
\newtheorem{example}[theorem]{Example}
\newtheorem{corollary}[theorem]{Corollary}
\newtheorem{proposition}[theorem]{Proposition}
\newenvironment{Theorem}{\goodbreak \begin{theorem}\normalfont \slshape}{\end{theorem}}
\newenvironment{Example}{\goodbreak \begin{example}\normalfont \rmfamily}{\bend\end{example}}
\newenvironment{Corollary}{\goodbreak \begin{corollary}\normalfont \slshape}{\end{corollary}}
\def\imod#1{\allowbreak\mkern10mu({\operator@font mod}\,\,#1)}
\numberwithin{equation}{section}
\numberwithin{table}{section}
\numberwithin{figure}{section}
\newcommand{\bend}{\hspace*{0ex} \hfill \hbox{\vrule height
    1.5ex\vbox{\hrule width 1.4ex \vskip 1.4ex\hrule  width 1.4ex}\vrule
    height 1.5ex}}
\long\def\symbolfootnote[#1]#2{\begingroup
\def\thefootnote{\fnsymbol{footnote}}\footnote[#1]{#2}\endgroup}
\newcommand{\sspan}{\textnormal{span}}
\newcommand{\OO}[1]{\mathcal{O}\left(#1\right)}
\renewcommand{\mathbf}[1]{\ensuremath{\boldsymbol{#1}}}
\renewcommand{\textbf}[1]{{\ensuremath{\boldsymbol{#1}}}}
\renewcommand{\thefootnote}{\fnsymbol{footnote}}
\title{Constructing spatial discretizations for sparse multivariate trigonometric polynomials
that allow for a fast discrete Fourier transform}
\date{\today}
\author{
Lutz K\"ammerer\footnotemark[1]}
\newif\ifshowextendedpaperversion
\begin{document}

\maketitle

\begin{abstract}
\small
The paper discusses the construction of high dimensional spatial discretizations
for arbitrary multivariate trigonometric polynomials, where the frequency support of the trigonometric polynomial is known.
We suggest a construction based on
the union of several rank\mbox{-}1 lattices as sampling scheme.
We call such schemes multiple rank\mbox{-}1 lattices.
This approach automatically makes available a fast discrete Fourier transform (FFT)
on the data.

The key objective of the construction of spatial discretizations is the
unique reconstruction of the trigonometric polynomial using the sampling values
at the sampling nodes. We develop different construction methods
for multiple rank\mbox{-}1 lattices that allow for this unique reconstruction.
The symbol $M$  denotes the total number of sampling nodes within the multiple rank\mbox{-}1 lattice.
In addition, we assume that the multivariate trigonometric polynomial
is a linear combination of $T$ trigonometric monomials.
The ratio of the number $M$ of sampling points that are sufficient for the unique reconstruction
to the number $T$ of distinct monomials is called oversampling factor in this context.
The presented construction methods for multiple rank\mbox{-}1 lattices allow for estimates of
this number $M$.
Roughly speaking, the oversampling factor $M/T$ is independent of the spatial dimension and, with high probability,
only logarithmic in $T$, which is much better than the oversampling
factor that is expected for a sampling method that uses one single rank\mbox{-}1 lattice. 

The newly developed approaches for the construction of spatial discretizations are probabilistic methods.
The arithmetic complexity of these algorithms depend only linearly on the spatial dimension and,
with high probability, only linearly on $T$ up to some logarithmic factors.

Furthermore, we analyze the computational complexities of the resulting FFT algorithms,
that exploits the structure of the suggested multiple rank-1 lattice spatial discretizations, in detail and obtain
upper bounds in $\OO{M\log M}$, where the constants depend only linearly on the spatial dimension.
With high probability, we construct spatial discretizations where $M/T\le C\log T$ holds,
which implies that the complexity of the corresponding FFT converts to $\OO{T\log^2 T}$.
\medskip

\noindent {\textit{Keywords and phrases}} : 
sparse multivariate trigonometric polynomials, lattice rule,  multiple rank\mbox{-}1 lattice, fast Fourier transform

\medskip

{\small
\noindent {\textit{2010 AMS Mathematics Subject Classification}} : 
60B15, 
65T50, 
68Q25, 
68W40, 
94A20  
}
\end{abstract}
\footnotetext[1]{
  Chemnitz University of Technology, Faculty of Mathematics, 09107 Chemnitz, Germany\\
  kaemmerer@mathematik.tu-chemnitz.de
}

\medskip

\ifshowextendedpaperversion
\tableofcontents
\newpage
\fi

\section{Introduction}

Since almost 60 years rank\mbox{-}1 lattice rules as Quasi-Monte Carlo type cubature rules were investigated 
in  the field of numerical integration. An overview on the early work on lattice rules can be found in \cite{Nie78}. A lot of
meaningful theoretical facts in the field of numerical integration
could be proved using rank\mbox{-}1 lattices as sampling schemes.
The crucial breakthrough for practical applications
was the development of component--by--component constructions for rank\mbox{-}1 lattices.
Under specific assumptions, this construction method is very practicable and the corresponding cubature rules guarantee 
optimal worst case error rates for the numerical integration of specific multivariate functions, cf., e.g., \cite{CoNu07}.
Already in the late 1950s, N.\,M.~Korobov developed the ideas and a substantial theory
on that topic in, e.g., \cite{Ko59,Ko63}. Unfortunately, N.\,M.~Korobov published in Russian, which
is at least one reason for the lack of awareness of his results.
In 2002, 
the component--by--component idea was re-invented in
\cite{SlRe02}. This paper can be regarded as the initiation of the subsequent
activities of a lot of researchers on component--by--component constructions.

The book of N.\,M.~Korobov, cf.~\cite[Chapter IV]{Ko63}, already contains estimates for
the error of approximation methods based on sampling along single rank-1 lattices.
The main focus was on functions of dominating mixed smoothness, which is still
a highly topical research field. In the 1980's, V.\,N.~Temlyakov \cite{Tem86} improves the 
results of N.\,M.~Korobov using number theoretic argumentations, which
lead to existence results but does not allow for the construction of
suitable rank-1 lattices.
Later, in \cite{KuSlWo06,KaPoVo13,KaPoVo14} similar considerations led to
upper bounds on the worst case error in terms of the number of used sampling values, that were similar to those in \cite{Tem86} and still unsatisfactory in relation to, e.g., sparse grid approximation errors~\cite{DuTeUl16}.
However, the results in \cite{KuSlWo06,KaPoVo13} provides practicable methods for the construction
of suitable rank-1 lattices.
A more recent paper, cf.~\cite{ByKaUlVo16}, discusses the aforementioned non-optimal approximation errors in detail and presents a lower bound on the worst case approximation error for rank\mbox{-}1 lattice sampling that is essentially the same as the already known upper bound.
Accordingly, the optimal worst case errors for approximation cannot even nearly be reached using rank\mbox{-}1 lattice sampling for specific approximation problems, e.g., hyperbolic cross approximations.

Nevertheless rank\mbox{-}1 lattices as sampling schemes provide
stability, available efficient algorithms for computing the discrete Fourier transform \cite{kaemmererdiss}, and
good approximation properties, cf., e.g., \cite{ByKaUlVo16}, in particular for high dimensional
approximation problems. For specific frequency sets $I\subset\Z^d$, the crucial disadvantage of rank\mbox{-}1 lattices
is the large number $M$ of sampling values 
one necessarily needs in order to uniquely reconstruct a trigonometric
polynomial with frequencies supported on the frequency set $I$.
Depending on the structure of the frequency set $I$, the number $M$ is bounded from below by a term $M\ge C|I|^2$, cf. \cite{kaemmererdiss}.

In order to overcome the limitations of the single rank\mbox{-}1 lattice approach, the author presented in \cite{Kae16} the idea to use multiple rank\mbox{-}1 lattices as spatial discretizations for multivariate trigonometric polynomials.
A first simple construction method for multiple rank\mbox{-}1 lattices
is published therein, where the construction is mainly based on
the random determination of generating vectors of single rank\mbox{-}1 lattices.
Various numerical experiments illustrate the advantages of these
spatial discretizations, such as
\begin{itemize}
\item the existence of a fast discrete Fourier transform (FFT),
\item low oversampling factors of sampling sets that allow for a unique reconstruction of multivariate trigonometric polynomials from their sampling values,
\item small condition numbers of the corresponding Fourier matrices,
\item excellent approximation properties in specific problems.
\end{itemize}

Supposing that the above advantages actually hold in general --- or at least with high probability ---
we will overcome limitations of sparse grid sampling as well as limitations of single rank\mbox{-}1 lattice sampling,
which are unbounded condition numbers, cf. \cite{KaKu10}, and necessarily huge oversampling factors, cf. \cite{kaemmererdiss}, respectively.

While the fast discrete Fourier transform algorithms are already presented in \cite{Kae16},
all other above listed advantages are not proven yet.

In this paper, we incorporate ideas from \cite{ArGiRo16}
into the algorithm that is developed in \cite[Alg. 5]{Kae16} such that
the construction of multiple rank\mbox{-}1 lattices will allow for estimates of the sizes and the number of single rank\mbox{-}1 lattices 
that are joined to a spatial discretization.

We reflect some basics on multivariate trigonometric polynomials and (multiple) rank\mbox{-}1 lattices from \cite{kaemmererdiss,Kae16}.
At first, we define the torus $\T\simeq[0,1)$ and the multivariate trigonometric polynomial
\begin{align*}
p\colon\T^d\rightarrow \C,\qquad
p(\zb x):=\sum_{\zb k\in I}\hat{p}_{\zb k}\e^{2\pi\ii\zb k\cdot\zb x},
\end{align*}
where $I\subset\Z^d$ is called frequency set, its cardinality $|I|<\infty$ is finite,
and $\hat{p}_{\zb k}$ is named Fourier coefficient to the frequency $\zb k$ of the multivariate trigonometric polynomial $p$.
For an arbitrary set of sampling nodes $\mathcal{X}\subset\T^d$, $|\mathcal{X}|<\infty$, the Fourier matrix
\begin{align}
\zb A(\mathcal{X},I):=\left( \e^{2\pi\ii\zb k\cdot\zb x}\right)_{\zb x\in\mathcal{X},\zb k\in I}\label{eq:Fourier_matrix_X}
\end{align}
allows for the computation of the evaluation of $p$ at all nodes of $\mathcal{X}$ using the matrix vector product
\begin{align*}
\zb p=\zb A(\mathcal{X},I)\zb{\hat{p}},
\end{align*}
where $\zb p=(p(\zb x))_{\zb x\in\mathcal{X}}$ and $\zb{\hat{p}}=(\hat{p}_{\zb k})_{\zb k\in I}$
are vectors that contain the sampling values of $p$ at all sampling nodes from $\mathcal{X}$ and the Fourier coefficients of $p$, respectively.
At this point, we need to assume that the elements of $I$ and $\mathcal{X}$ are in a fixed order to interrelate
the matrix with the vectors. Knowing the Fourier coefficients $\hat{p}_{\zb k}$, the matrix vector product will compute the sampling values
of $p$ at all nodes of $\mathcal{X}$. On the other hand, we can also uniquely reconstruct the Fourier coefficients from known
sampling values, iff the matrix $\zb A(\mathcal{X},I)$ is of full column rank.
In that case, the matrix $\zb A(\mathcal{X},I)$ and, synonymously, the sampling scheme ¸$\mathcal{X}$
allow for the unique reconstruction of all trigonometric polynomials $p\in\Pi_I:=\sspan\{\e^{2\pi\ii\zb k\cdot\circ}\colon\zb k\in I\}$.
Usually, one uses
a normal equation $\zb{\hat{p}}=\left(\zb A(\mathcal{X},I)^*\zb A(\mathcal{X},I)\right)^{-1}\zb A(\mathcal{X},I)^*\zb p$ in order to compute all Fourier coefficients $\hat{p}_{\zb k}$, $\zb k\in I$, of the trigonometric polynomial $p$, where
$\zb A^*$  is the adjoint matrix of $\zb A$.

The central building block of our construction of spatial discretizations are so-called rank\mbox{-}1 lattices
\begin{align*}
\Lambda(\zb z,M)=\left\{\frac{j}{M}\zb z\bmod \zb 1\colon j=0,\ldots,M-1\right\}\subset\T^d,
\end{align*}
where $\zb z\in\Z^d$ and $M\in\N$ are called generating vector and lattice size of $\Lambda(\zb z,M)$, respectively.
The modulo $\zb 1$ operation denotes a component-wise operation $x\bmod 1=x-\floor{x}\in\T$
and implies that $\Lambda(\zb z,M)\subset[0,1)^d$ holds.
A multiple rank\mbox{-}1 lattice
\begin{align*}
\Lambda(\zb z_1,M_1,\ldots,\zb z_s,M_s):=\bigcup_{\ell=1}^s\Lambda(\zb z_{\ell},M_{\ell})
\end{align*}
is a union of a set of rank\mbox{-}1 lattices.
Obviously, we obtain 
\begin{align}
|\Lambda(\zb z_1,M_1,\ldots,\zb z_s,M_s)|\le1-s+\sum_{\ell=1}^{s}M_{\ell},\label{eq:Fourier_matrix_mr1l}
\end{align}
where at least for the case that all $M_{\ell}$ are pairwise coprime and $\zb z_\ell\not\equiv\zb 0\imod{M_{\ell}}$, $\ell=1,\ldots,s$, the equality holds, cf. \cite[Cor. 2.3]{Kae16}.
Since we use multiple rank\mbox{-}1 lattices as spatial discretization of multivariate trigonometric polynomials, we consider
the corresponding Fourier matrix with  $\mathcal{X}=\Lambda(\zb z_1,M_1,\ldots,\zb z_s,M_s)$
\begin{align}
\zb A&:=\zb A(\Lambda(\zb z_1,M_1,\ldots,\zb z_s,M_s),I):=
\left(\begin{array}{c}
\left(\e^{2\pi\ii \frac{j}{M_1}\zb k\cdot\zb z_1}\right)_{j=0,\ldots,M_1-1,\,\zb k\in I}\\
\left(\e^{2\pi\ii \frac{j}{M_2}\zb k\cdot\zb z_2}\right)_{j=1,\ldots,M_2-1,\,\zb k\in I}\\
\vdots\\
\left(\e^{2\pi\ii \frac{j}{M_s}\zb k\cdot\zb z_s}\right)_{j=1,\ldots,M_s-1,\,\zb k\in I}
\end{array}\right). \label{eqn:Fourier_matrix}
\end{align}
We stress on the fact, that this construction does not comply with the general construction in
\eqref{eq:Fourier_matrix_X} since we allow for identical rows within
$\zb A(\Lambda(\zb z_1,M_1,\ldots,\zb z_s,M_s),I)$ in the case that the equality in
\eqref{eq:Fourier_matrix_mr1l} does not hold.
The crucial advantage of this construction of the Fourier matrix and the corresponding vector $\zb p$ as well is the simple
application of $s$ rank\mbox{-}1 lattice FFTs,
where each of them is the fast computation of a matrix vector product $\zb A(\Lambda(\zb z_\ell,M_\ell),I)\zb{\hat{p}}$, $\ell\in\{1,,\ldots,s\}$, instead of using the matrix $\zb A(\Lambda(\zb z_1,M_1,\ldots,\zb z_s,M_s),I)$
for computing the sampling values of $p$ at all nodes within $\Lambda(\zb z_1,M_1,\ldots,\zb z_s,M_s)$.
The corresponding complexity is in $\OO{M\log M+sd|I|}$, where $M=\sum_{\ell=1}^{s}M_{\ell}$, cf. \cite[Alg. 3]{Kae16}.

Even the solution of the normal equation $\zb{\hat{p}}=\zb A^{\dagger}\zb p$, where $\zb A^{\dagger}:=\left(\zb A^*\zb A\right)^{-1}\zb A^*$ is the pseudo-inverse of $\zb A$,
for given sampling values $\zb p$ can be efficiently computed by means of a conjugate gradient method that uses rank\mbox{-}1 lattice FFTs and
its adjoint FFTs, cf. \cite[Sec. 4]{Kae16}.
Additional requirements on the multiple rank\mbox{-}1 lattice allow for a direct reconstruction method, cf. \cite[Alg. 6]{Kae16},
that has a total complexity in $\OO{M\log M+s(d+\log{|I|})|I|}$.
At this point, we would like to emphasize that all constructions in the present paper, if successful,
allow for the application of this direct fast reconstruction algorithm.
We call a multiple rank\mbox{-}1 lattice that allows for a unique reconstruction of all trigonometric polynomials $p\in\Pi_I$, i.e., the matrix $\zb A(\Lambda(\zb z_1,M_1,\ldots,\zb z_s,M_s),I)$ has full column rank, 
\emph{reconstructing multiple rank\mbox{-}1 lattice} for the frequency set $I$ or, synonymously, for all polynomials $p\in\Pi_I$.

Obviously, the aforementioned complexities are all bounded by terms that are at most linear in $M$ up to a logarithmic factor
plus linear in $|I|$ up to a logarithmic factor, which is clearly much less than the complexity $\OO{M|I|}$ of a matrix vector product using
the Fourier matrix~$\zb A(\Lambda(\zb z_1,M_1,\ldots,\zb z_s,M_s),I)$. Nevertheless, the reconstruction problem will only provide unique solutions if the Fourier matrix $\zb A$ is of full column rank, i.e., 
a unique reconstruction may require a huge number of sampling values, cf. e.g. \cite{kaemmererdiss}, where we proved that $M\sim |I|^2$ is necessary for a single rank\mbox{-}1 lattice as spatial discretization $\mathcal{X}$, which can be interpreted as the currently considered approach with $s=1$, in the worst case.

In this  paper, we present algorithms that  determine reconstructing multiple rank\mbox{-}1 lattices
for given frequency sets $I$. The crucial difference to the already published algorithm, cf. \cite[Alg. 5]{Kae16},
is that the new algorithms allow for estimates on the total number of sampling nodes of the multiple rank\mbox{-}1 lattices that arise.
Under the weak assumption that the frequency set $I\subset\Z^d$, $|I|>3$, is contained in a box of edge length $2\,(|I|-1)$, we prove that there exists at least one reconstructing multiple rank\mbox{-}1 lattice for $I$ with a number of sampling nodes that is bounded
by $C|I|\ln|I|$, where $C<10$ is a fixed constant.
Moreover, the presented algorithms are based on the theoretical considerations and their constructive proofs.
The determined reconstructing multiple rank\mbox{-}1 lattices for frequency sets $I$
contain at most $C_{\delta}|I|\log |I|$ sampling nodes with probability at least $1-\delta$,
where $C_{\delta}\lesssim \log{\delta}$.

The last-mentioned constants $C$ and $C_\delta$  do not depend on the dimension $d$.
If we assume the cardinality $M$ of the reconstructing multiple rank\mbox{-}1 lattice for $I$ fulfills
$M\le C|I|\ln|I|$, we achieve computational complexities in $\OO{|I|(d+\log|I|)\log|I|}$ of the fast algorithms computing
the matrix vector products $\zb  A\zb{\hat{p}}$ and $(\zb A^*\zb A)^{-1}\zb A^*\zb p$, cf. \cite{Kae16}.
At this point, we stress on the dimension $d$, that is only a linear factor, and the 
cardinality of the frequency set $I$, that arises as a linear factor times a polynomial of degree two in $\log |I|$.

\begin{table}
\centering
\scalebox{0.875}{
\begin{tabular}{ccccc}
\toprule
&&\multicolumn{2}{c}{computational complexities}\\
$I$ &
$\mathcal{X}$ &
$\zb A(\mathcal{X},I)\zb p$ &
$\zb A^{\dagger}(\mathcal{X},I)\zb{\hat{p}}$ &
references\\
\midrule
\midrule
full grid & full grid & 
\multicolumn{2}{c}{\multirow{2}{*}{$\OO{N^d\log N}$}}&
\multirow{2}{*}{\cite{loan}}
\\
$|I|=N^d$ & $|\mathcal{X}|=N^d$&\\
\midrule
\midrule
hyperbolic cross & sparse grid & 
\multicolumn{2}{c}{\multirow{2}{*}{$\OO{N\log^d N}$}}&
\multirow{2}{*}{\cite{Halla92}}
\\
$|I|\simeq C_dN\log^{d-1}N$ & $|\mathcal{X}|\simeq C_dN\log^{d-1}N$&\\
\midrule
hyperbolic cross & rank\mbox{-}1 lattice & 
\multicolumn{2}{c}{\multirow{2}{*}{$\OO{N^2\log^{d-1} N}$}}&
\multirow{2}{*}{\cite{Kae2012}}
\\
$|I|\simeq C_dN\log^{d-1}N$ & $|\mathcal{X}|\lesssim c_dN^2\log^{d-2}N$&\\
\midrule
hyperbolic cross & multiple rank\mbox{-}1 lattice & 
\multicolumn{2}{c}{\multirow{2}{*}{$\OO{N\log^{d+1} N}$}}&
\multirow{2}{*}{\cite{Kae16} \& Thm. \ref{thm:prob_bound_T}}
\\
$|I|\simeq C_dN\log^{d-1}N$ & $|\mathcal{X}|\lesssim c_dN\log^{d}N$&\\
\midrule
\midrule
arbitrary& rank\mbox{-}1 lattice & 
\multicolumn{2}{c}{\multirow{2}{*}{$\OO{T^2\log T}$}}&
\multirow{2}{*}{\cite{kaemmererdiss}}
\\
$|I|=T$, $N_I\lesssim T$  & $|\mathcal{X}|< T^2$&\\
\midrule
arbitrary & random & 
\multicolumn{2}{c}{\multirow{2}{*}{$\OO{T^2\log T}$}}&
\multirow{2}{*}{\cite{GrPoRa07}}
\\
$|I|=T$ & $|\mathcal{X}| \lesssim T\log{T}$&\\
\midrule
arbitrary & multiple rank\mbox{-}1 lattice & 
\multicolumn{2}{c}{\multirow{2}{*}{$\OO{T\log^2 T}$}}&
\multirow{2}{*}{\cite{Kae16} \& Thm. \ref{thm:prob_bound_T}}
\\
$|I|=T$, $N_I\lesssim T$  & $|\mathcal{X}| \lesssim T\log{T}$&\\
\bottomrule
\end{tabular}}
\caption{Different types of frequency sets and corresponding spatial discretization schemes in comparison. In addition, the computational complexities of known evaluation and reconstruction algorithms and references.}\label{tab:intro}
\end{table}

In Table \ref{tab:intro} we present different types of frequency sets $I$ and corresponding spatial discretization schemes $\mathcal{X}$ and 
also the computational complexity of known fast algorithms for the computation of the evaluation $\zb p=\zb  A\zb{\hat{p}}$ and the reconstruction $\zb{\hat{p}}=\zb  A^\dagger\zb p$ of the corresponding multivariate trigonometric polynomials in order to classify the result of this paper.
We stress the fact that we have left out terms that depend only on $d$ in the computational complexities since
they are not analyzed in specific cases.

The full grid case is well known as FFT of multidimensional arrays
and only provided as reference in the table.
Considering hyperbolic crosses as frequency sets, there already exist
two approaches for spatial discretizations that allow
for a fast Fourier transform. First, sparse grids as spatial discretizations
and the corresponding hyperbolic cross FFT provide a complexity that is almost
linear in the number of frequencies up to a logarithmic factor.
Unfortunately, the associated discrete Fourier transform is not stable, cf. \cite{KaKu10}.
Second, a single rank\mbox{-}1 lattice discretization for hyperbolic cross trigonometric polynomials
requires at least $N^2$ sampling nodes and we are able to construct a single rank\mbox{-}1 lattice of size $c_dN^2\log^{d-2}N$
that allows for a unique reconstruction of hyperbolic cross trigonometric
polynomials, cf. \cite{kaemmererdiss}. The complexity of the corresponding fast Fourier transform is
linear in the number of sampling nodes within the spatial discretization up to a logarithmic factor.
The bottleneck of this approach is the necessarily huge number of sampling nodes of the spatial discretization.

In this paper, we show that the concept of multiple rank\mbox{-}1 lattices allows for spatial discretizations of
hyperbolic cross trigonometric polynomials of much lower cardinality than single rank\mbox{-}1 lattices,
i.e., we prove oversampling factors that are bounded by $C\log N$. For single rank\mbox{-}1 lattices we observed
oversampling factors of approximately $CN/\log{N}$. 
Regarding only the oversampling of the last mentioned spatial discretizations for hyperbolic cross trigonometric polynomials,
we should prefer sparse grids.
However, a critical look at the numerical tests and, in particular, on the condition numbers of the
Fourier matrices may lead to another assessment, cf. Section~\ref{sec:num_test:hc}.
Apart from this, the general concept of sparse grids requires a specific kind of structure of the frequency sets $I$ called ``downward closed''.
If this structure is seriously violated the sparse grid approach is not successfully applicable.

Last, Table \ref{tab:intro} shows results for trigonometric polynomials with arbitrary frequency sets $I$ of cardinality $T$, where the listed sampling
methods based on structured sampling sets, i.e., the single rank-1 lattice approach as well as the multiple rank-1 lattice approach, require some restriction on the expansion $N_I$,
cf. \eqref{eq:def_expansion}, of the frequency set $I$ under consideration.
A spatial discretization using a single rank\mbox{-}1 lattice structure needs less than $T^2$ sampling nodes and more than
$c\,T^2$, $c>0$, sampling nodes in the worst case.
The corresponding fast Fourier transform is linear in the number of used sampling nodes up to a logarithmic factor.
Here again, the number of necessary sampling nodes is the bottleneck of the single rank\mbox{-}1 lattice approach.
On the contrary, random sampling requires only a few oversampling for stable  Fourier matrices.
Since the corresponding
Fourier matrices suffer from a lack of structure, the discrete Fourier transform
is realized using the matrix vector product and, if necessary, a conjugate gradient method.
Hence, the computation is not efficient.
The present paper shows that a multiple rank\mbox{-}1 lattice discretization
involves oversampling that is similar in terms of $T$ to the oversampling that arises at random sampling.
The fast Fourier transform algorithms in \cite{Kae16} exploit the structure
of multiple rank\mbox{-}1 lattices and yield lower computational complexities compared to a random
sampling approach.

The remainder of this paper is structured as follows.
In Section~\ref{sec:basics}, we collect some general basic facts
about spatial discretizations of multivariate trigonometric polynomials.
Subsequently, we present a simple probabilistic strategy that allows for the construction
of spatial discretizations of multivariate trigonometric polynomials in Section~\ref{sec:main_results}.
In particular, carefully chosen lattice sizes $M_1,\ldots, M_s$ and randomly chosen generating vectors $\zb z_\ell\in[0,M_\ell-1]^d\cap\Z^d$, $\ell=1,\ldots,s$,
allow for an estimate of the success probability of this approach.
In addition, we give upper bounds on the number $s$ of such rank-1 lattices
that need to be joined 
in order to obtain -- with a certain probability -- specific properties of the resulting multiple rank-1 lattice that ensure the required full column rank of the Fourier matrix.
We use these bounds on $s$ in order to estimate the number of sampling nodes within the constructed spatial discretizations.
In Algorithms~\ref{alg:construct_mr1l_uniform} and~\ref{alg:construct_mr1l_uniform_different_primes}, we present
an almost non-intrusive approach, i.e., only three basic facts, the dimension $d$, an upper bound $N$ on the expansion $N_I$, and an
upper bound $T$ on the cardinality $|I|$ of the frequency set $I$, need to be known in order to construct a spatial discretization with high probability.
The knowledge about the concrete locations of the elements in the frequency set $I$ allow for some slight modifications on the already developed algorithms
and will decrease the number of used lattices and, with this, the number of sampling nodes within the spatial discretizations in practice, cf.
Algorithms~\ref{alg:construct_mr1l_I} and~\ref{alg:construct_mr1l_I_distinct_primes}. 
In Section~\ref{sec:heu_upgr}, we discuss some improvement ideas on the construction methods. We develop
Algorithms~\ref{alg:construct_mr1l_3} and~\ref{alg:construct_mr1l_4} that are based on an iterative application of the
ideas from Section~\ref{sec:main_results} which allows for the reduction of the lattice sizes $M_\ell$ and leads to slightly larger theoretical bounds on the number $s$ of joined rank-1 lattices.
Moreover, we present an approach, cf. Algorithm~\ref{alg:construct_mr1l_5},
that successively constructs spatial discretizations for parts of the frequency set $I$ under consideration
which has the advantage that the used lattice sizes $M_\ell$ may be further reduced.
In Section~\ref{sec:num_test}, we present extensive numerical tests that confirm the theoretical results.
Moreover, we observe stability of the spatial discretizations, i.e., the numerically determined condition numbers of the Fourier matrices $\zb A$, cf. \eqref{eqn:Fourier_matrix}, are outstandingly small.

\section{Basics}\label{sec:basics}

We collect some basic observations about sampling schemes,
first for multivariate trigonometric monomials and second for multivariate
trigonometric polynomials with frequencies supported on shifted frequency sets.

\begin{lemma}\label{lem:one_fi}
We consider a trigonometric polynomial consisting of at most one scaled monomial, i.e.,
$p(\zb x)=\hat{p}_{\zb k}\e^{2\pi\ii\zb k\cdot\zb x}$.
The corresponding frequency set $I\subset\Z^d$ contains one element $\zb k\in I$.
Then, the single Fourier coefficient $\hat{p}_{\zb k}$, $\zb k\in I$, can be reconstructed using an
arbitrary sampling set that consists of at least  one sampling node $\zb x_0\in \T^d$.
\end{lemma}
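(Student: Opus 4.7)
The claim is essentially a one-line observation about a $1 \times 1$ Fourier matrix, so the plan is to apply the full-column-rank criterion discussed just before the lemma statement to the trivially small system that arises here.

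Concretely, I would pick an arbitrary $\zb x_0 \in \T^d$, set $\mathcal{X} := \{\zb x_0\}$, and form the Fourier matrix $\zb A(\mathcal{X}, I) = \bigl( \e^{2\pi\ii \zb k \cdot \zb x_0} \bigr) \in \C^{1 \times 1}$ according to \eqref{eq:Fourier_matrix_X}. Since $|\e^{2\pi\ii \zb k \cdot \zb x_0}| = 1 \neq 0$, the matrix is nonzero and thus has full column rank. By the general discussion preceding the lemma, this already guarantees the unique reconstructibility of every $p \in \Pi_I$, and the reconstruction is explicitly given by
\begin{equation*}
\hat{p}_{\zb k} \;=\; \e^{-2\pi\ii \zb k \cdot \zb x_0}\, p(\zb x_0).
\end{equation*}

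There is really no technical obstacle here; the only thing worth emphasizing is that the statement holds for \emph{any} choice of $\zb x_0 \in \T^d$, i.e., no genericity or coprimality assumption on $\zb x_0$ is required, because the single scalar entry of $\zb A(\mathcal{X}, I)$ cannot vanish. I would conclude the proof by remarking that this trivial base case is the reason why later constructions only need to worry about distinguishing pairs of distinct frequencies $\zb k \neq \zb k'$ rather than about isolating single frequencies. \qed
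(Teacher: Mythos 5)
Your proof is correct and is essentially the paper's own argument: the paper simply notes that $\e^{2\pi\ii\zb k\cdot\zb x_0}\neq 0$ and writes $\hat{p}_{\zb k}=p(\zb x_0)/\e^{2\pi\ii\zb k\cdot\zb x_0}$, which is exactly your explicit reconstruction formula, with your full-column-rank framing of the $1\times 1$ matrix being only a cosmetic repackaging of the same observation.
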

\begin{proof}
Due to the fact that $\e^{2\pi\ii\zb k\cdot\zb x_0}\neq0$, we achieve
$\hat{p}_{\zb k}=\frac{p(\zb x_0)}{\e^{2\pi\ii\zb k\cdot\zb x_0}}$.
\end{proof}
\begin{lemma}\label{lem:shifting_I}
Let the numbers $a_j,b_j\in\Z$ with $a_j\le b_j$, $j=1,\ldots,d$, be given.
We consider the frequency set $I\subset\{a_1,\ldots, b_1\}\times\cdots\times \{a_d,\ldots,b_d\}\subset\Z^d$
and a sampling set $\mathcal{X}$ that allow for the unique reconstruction of all trigonometric polynomials
supported on the frequency set $I_0:=\{\zb k\in \Z^d\colon\zb k=\zb h-\zb a,\,\zb h\in I\}$, $\zb a=(a_1,\ldots,a_d)^\top$.
Then, sampling along the sampling set $\mathcal{X}$ also
allows for the unique reconstruction of all trigonometric polynomials within $\Pi_{I}$.
\end{lemma}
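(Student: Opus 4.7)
The plan is to exhibit an explicit bijection between polynomials in $\Pi_I$ and polynomials in $\Pi_{I_0}$ given by multiplication with the monomial $\e^{2\pi\ii\zb a\cdot\zb x}$, and then transfer the reconstruction property through this bijection. Concretely, for any $p\in\Pi_I$ with $p(\zb x)=\sum_{\zb k\in I}\hat p_{\zb k}\e^{2\pi\ii\zb k\cdot\zb x}$, I would factor out $\e^{2\pi\ii\zb a\cdot\zb x}$ to obtain
\begin{equation*}
p(\zb x)=\e^{2\pi\ii\zb a\cdot\zb x}\sum_{\zb k\in I}\hat p_{\zb k}\e^{2\pi\ii(\zb k-\zb a)\cdot\zb x}=\e^{2\pi\ii\zb a\cdot\zb x}\,q(\zb x)
\end{equation*}
with $q\in\Pi_{I_0}$ whose Fourier coefficients are $\hat q_{\zb k-\zb a}=\hat p_{\zb k}$.

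The reconstruction argument is then immediate: given the sample vector $\zb p=(p(\zb x))_{\zb x\in\mathcal{X}}$, multiply each entry by the known nonzero factor $\e^{-2\pi\ii\zb a\cdot\zb x}$ to obtain the sample vector $\zb q=(q(\zb x))_{\zb x\in\mathcal{X}}$ of a polynomial in $\Pi_{I_0}$. By hypothesis, $\mathcal{X}$ uniquely reconstructs every such $q$, so we recover all $\hat q_{\boldh}$, $\boldh\in I_0$, and hence all $\hat p_{\zb k}=\hat q_{\zb k-\zb a}$, $\zb k\in I$.

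Equivalently, at the matrix level one observes the factorization
\begin{equation*}
\zb A(\mathcal{X},I)=\boldD\,\zb A(\mathcal{X},I_0),\qquad \boldD:=\diag\bigl(\e^{2\pi\ii\zb a\cdot\zb x}\bigr)_{\zb x\in\mathcal{X}},
\end{equation*}
since the $(\zb x,\zb k)$-entry of the left side is $\e^{2\pi\ii\zb a\cdot\zb x}\e^{2\pi\ii(\zb k-\zb a)\cdot\zb x}$. The diagonal matrix $\boldD$ is unitary (all diagonal entries have modulus one), hence invertible, so $\zb A(\mathcal{X},I)$ has full column rank if and only if $\zb A(\mathcal{X},I_0)$ does. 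The latter holds by assumption, which is exactly the claim.

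There is essentially no obstacle: the crucial observation is simply that multiplication by a single monomial is an invertible operation on sample values at arbitrary nodes, so reconstructability is preserved under integer translation of the frequency set. The assumption $a_j\le b_j$ and the enclosure of $I$ in a box are not used in the proof itself; they merely provide the context in which the translation $\zb a$ is a well-defined integer shift.
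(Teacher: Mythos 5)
Your proof is correct and follows essentially the same route as the paper: both reduce the claim to the diagonal factorization $\zb A(\mathcal{X},I)=\zb D\,\zb A(\mathcal{X},I_0)$ with $\zb D=\diag(\e^{2\pi\ii\zb a\cdot\zb x})_{\zb x\in\mathcal{X}}$ invertible, so full column rank transfers from $\zb A(\mathcal{X},I_0)$ to $\zb A(\mathcal{X},I)$. The polynomial-level bijection you describe first is just this same observation phrased in terms of sample values, so nothing differs in substance.
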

\begin{proof}
We consider the matrix 
\begin{align*}
\zb A(\mathcal{X},I_0)&=\left(\e^{2\pi\ii\zb k\cdot\zb x_j}\right)_{\zb k\in I_0,\,\zb x_j\in\mathcal{X}}=\left(\e^{2\pi\ii(\zb h-\zb a)\cdot\zb x_j}\right)_{\zb h-\zb a\in I_0,\,\zb x_j\in\mathcal{X}}\\
&=\left(\e^{-2\pi\ii\zb a\cdot\zb x_j}\e^{2\pi\ii\zb h\cdot\zb x_j}\right)_{\zb h\in I,\,\zb x_j\in\mathcal{X}}
=\zb D\left(\e^{2\pi\ii\zb h\cdot\zb x_j}\right)_{\zb h\in I,\,\zb x_j\in\mathcal{X}}=\zb D\zb A(\mathcal{X},I),
\end{align*}
where $\zb D:=\diag\left(\e^{-2\pi\ii\zb a\cdot\zb x_j}\right)_{\zb x_j\in\mathcal{X}}$.
The matrix $\zb  D$ is of full rank. Consequently, the full column rank of $\zb A(\mathcal{X},I_0)$
implies a full column rank of $\zb A(\mathcal{X},I)$.
\end{proof}
Similar argumentations yield the general result, that shifting the frequency set does
not affect the reconstruction property of a sampling set.
Moreover, for the specific structure of rank\mbox{-}1 lattices $\Lambda(\zb z,M)$ we can show that even
the modulo operation applied to each component of the vectors within $I$ does not affect
the reconstruction property of the sampling set $\Lambda(\zb z,M)$ provided that
the modulo operation on the frequencies in $I$ does not lead to collisions, i.e.,
the inequalities 
$$
\zb h\bmod M\not\equiv\zb k\bmod M
$$
hold for all  $\zb  h,\zb k\in I$, $\zb h\neq \zb k$.
In this context the modulo operation on vectors $\zb k$ is
given by
$$
\zb k\bmod M:=\left(
\begin{array}{c}
k_1\bmod M\\
\vdots\\
k_d\bmod M
\end{array}
\right),
$$
where
$k\bmod M=\min\{l\in\N_0\colon l\equiv k\imod M\}$
is the unique smallest non-negative integer representative of the
residue class of $k$ modulo $M$.

\begin{lemma}\label{lem:ImodM}
We consider the frequency set $I\subset\Z^d$, $|I|<\infty$,
and we fix a prime number $M$.
In addition, we define the frequency set
\begin{align}
I_{\bmod M}:=\{\zb h_{\zb k}:=\zb k\bmod M\colon \zb k\in I\}\label{eq:def_ImodM}
\end{align}
and choose a generating vector  $\zb z\in[0,M-1]^d\cap\Z^d$ at random.
If the equality $|I_{\bmod M}|=|I|$ holds, then the rank\mbox{-}1 lattice $\Lambda(\zb z,M)$ allows for the 
unique reconstruction of $\hat{p}_{\zb k}$ for $p\in \Pi_I$ iff the rank\mbox{-}1 lattice $\Lambda(\zb z,M)$
allows for the unique reconstruction of $\hat{\tilde{p}}_{\zb h_{\zb k}}$ for $\tilde{p}\in \Pi_{I_{\bmod M}}$.
\end{lemma}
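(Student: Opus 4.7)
The plan is to show that the two Fourier matrices $\zb A(\Lambda(\zb z,M),I)$ and $\zb A(\Lambda(\zb z,M),I_{\bmod M})$ agree up to a permutation of their columns, so that one has full column rank if and only if the other does. Since the reconstruction property for $\Pi_I$ (resp. $\Pi_{I_{\bmod M}}$) is equivalent to full column rank of the corresponding Fourier matrix, the claim will follow at once. Note that primality of $M$ plays no role in this reduction; it is only the congruence $\zb k\equiv\zb h_{\zb k}\imod{M}$ that is needed.

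First I would fix any sampling node $\frac{j}{M}\zb z\bmod\zb 1\in\Lambda(\zb z,M)$ and any $\zb k\in I$. Writing $\zb h_{\zb k}=\zb k\bmod M$, each component $k_i-(h_{\zb k})_i$ is an integer multiple of $M$, hence $\frac{j}{M}(\zb k-\zb h_{\zb k})\cdot\zb z\in\Z$. This yields the key identity
\begin{align*}
\e^{2\pi\ii\frac{j}{M}\zb k\cdot\zb z}=\e^{2\pi\ii\frac{j}{M}\zb h_{\zb k}\cdot\zb z}
\qquad\text{for all } j=0,\ldots,M-1,
\end{align*}
so that the column of $\zb A(\Lambda(\zb z,M),I)$ indexed by $\zb k$ is \emph{identical} to the column of $\zb A(\Lambda(\zb z,M),I_{\bmod M})$ indexed by $\zb h_{\zb k}$.

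Next I would use the hypothesis $|I_{\bmod M}|=|I|$, which is precisely the statement that the map $\zb k\mapsto\zb h_{\zb k}$ is a bijection from $I$ onto $I_{\bmod M}$. Combined with the previous step, this exhibits the two Fourier matrices as column permutations of one another, hence they possess the same column rank. In particular, $\zb A(\Lambda(\zb z,M),I)$ has full column rank if and only if $\zb A(\Lambda(\zb z,M),I_{\bmod M})$ does.

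Finally, invoking the characterisation that $\Lambda(\zb z,M)$ reconstructs all elements of $\Pi_I$ exactly when $\zb A(\Lambda(\zb z,M),I)$ has full column rank (and analogously for $I_{\bmod M}$), the equivalence asserted in the lemma is immediate. The statement contains no real obstacle: its entire content is the periodicity identity above together with the bijection supplied by the non-collision assumption $|I_{\bmod M}|=|I|$.
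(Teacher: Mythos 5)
Your proposal is correct and follows essentially the same route as the paper: the paper writes $\zb k=M\zb a_{\zb k}+\zb h_{\zb k}$ and shows the two Fourier matrices coincide entrywise under the column identification $\zb k\leftrightarrow\zb h_{\zb k}$, which is exactly your periodicity identity plus the bijection coming from $|I_{\bmod M}|=|I|$. Your added observation that primality of $M$ is not needed here is also consistent with the paper's argument, which nowhere uses it in this proof.
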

\begin{proof}
Since we assume $|I|=|I_{\bmod M}|$, we find for each $\zb k\in I$ unique vectors $\zb a_{\zb k}\in\Z^d$ and $\zb h_{\zb k}\in [0,M-1]^d\cap\Z^d$ such that
$$
\zb k:=M\zb a_{\zb k}+\zb h_{\zb k},
$$
where $\zb h_{\zb k_1}\neq \zb h_{\zb k_2}$ for all $\zb k_1,\zb k_2\in I$, $\zb k_1\neq \zb k_2$.
The set  $I_{\bmod M}\subset[0,M-1]^d$ is given by $I_{\bmod M}=\{\zb h_{\zb k}\colon \zb  k\in I\}$.
The corresponding Fourier matrices read as
\begin{align*}
\zb A(\Lambda(\zb z,M),I_{\bmod M})&=
\left(
\e^{2\pi\ii\zb h_{\zb k}\cdot\zb z\frac{j}{M}}
\right)_{\zb h_{\zb k}\in I_{\bmod M},\,j=0,\ldots,M-1}\\
&=\left(
\e^{2\pi\ii(\zb h_{\zb k}\cdot\zb z\frac{j}{M}+\frac{Mj}{M}\zb a_{\zb k}\cdot\zb z)}
\right)_{\zb h_{\zb k}\in I_{\bmod M},\,j=0,\ldots,M-1}\\
&=\left(
\e^{2\pi\ii\zb k\cdot\zb z\frac{j}{M}}
\right)_{\zb k\in I,\,j=0,\ldots,M-1}=\zb A(\Lambda(\zb z,M),I),
\end{align*}
which yields the assertion.
\end{proof}
Due to the last result, we collect all prime numbers $M$ such that
$|I_{\bmod M}|=|I|$ holds. The set of these prime numbers is denoted by
\begin{align}
P^I:=\{M\in\N\colon M \text{ prime with }|I_{\bmod M}|=|I|\}\label{eq:collision_free_primes}.
\end{align}
In addition, we define the \emph{expansion of a frequency set}  $I$ by
\begin{align}
N_I:=\max_{j=1,\ldots, d}\{\max_{\zb k\in I}k_j-\min_{\zb l\in I}l_j\}.\label{eq:def_expansion}
\end{align}
In fact, the number $N_I$ is the smallest number $N$ such that we can shift the set $I$
in $[0,N]^d$. Moreover, we observe the next Lemma.

\begin{lemma}\label{lem:M_g_NI}
The equality $|I_{\bmod M}|=|I|$ is satisfied for all $M\in\N$, $M>N_I$.
\end{lemma}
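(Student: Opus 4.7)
The plan is to show that the component-wise modulo-$M$ map is injective on $I$ whenever $M > N_I$, which immediately gives $|I_{\bmod M}| = |I|$ since $I_{\bmod M}$ is by definition the image of this map.

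First I would take two distinct frequencies $\zb k, \zb l \in I$ with $\zb k \neq \zb l$ and pick a coordinate index $j \in \{1,\ldots,d\}$ for which $k_j \neq l_j$. The central observation is that, by the definition of the expansion $N_I$ in \eqref{eq:def_expansion}, both $k_j$ and $l_j$ lie in the interval $[\min_{\zb h \in I} h_j,\, \max_{\zb h \in I} h_j]$, whose length is at most $N_I$. Hence
\begin{equation*}
0 < |k_j - l_j| \le \max_{\zb h \in I} h_j - \min_{\zb h \in I} h_j \le N_I < M.
\end{equation*}
In particular $k_j - l_j \not\equiv 0 \imod{M}$, so $k_j \bmod M \neq l_j \bmod M$, and therefore $\zb k \bmod M \neq \zb l \bmod M$.

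Consequently the map $\zb k \mapsto \zb k \bmod M$ from $I$ to $I_{\bmod M}$ is injective, so $|I_{\bmod M}| \ge |I|$. Since this map is surjective onto $I_{\bmod M}$ by the very definition \eqref{eq:def_ImodM}, equality holds. No step here is delicate; the only thing to be careful about is extracting a single coordinate at which the two frequencies differ and then using that the component-wise differences are bounded by $N_I$, which is a direct consequence of the definition of the expansion.
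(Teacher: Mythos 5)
Your proof is correct and uses essentially the same argument as the paper: both rest on the observation that two distinct frequencies differ in some coordinate $j$, that this difference satisfies $0<|k_j-l_j|\le N_I<M$ by the definition of the expansion, and that such a difference cannot be $\equiv 0\imod{M}$. The paper merely phrases this as a proof by contradiction (assuming a collision and deriving $|k_{j_0}-l_{j_0}|\ge M>N_I$), whereas you state it directly as injectivity of the modulo-$M$ map; the content is identical.
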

\begin{proof}
We assume $M>N_I$ with $|I_{\bmod M}|<|I|$, which implies
that there is at least a pair $\zb k,\zb l\in I$, $\zb k\neq\zb l$, with 
$\zb h_{\zb k}=\zb k\bmod M=\zb l\bmod M=\zb h_{\zb l}$.
According to
$0\neq \zb k-\zb l$ and $\zb k-\zb l\equiv\zb 0\mod M$,
there exists at least one $j_0\in\{1,\ldots,d\}$ with
$|k_{j_0}-l_{j_0}|=\ell M\ge M>N_I$, $\ell\in\N$,
which contradicts the definition of $N_I$.
\end{proof}
\begin{lemma}\label{lem:PI_NI}
The set of prime numbers $P^I$ contains all prime numbers larger than $N_I$.
\end{lemma}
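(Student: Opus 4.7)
The plan is to observe that this lemma is essentially an immediate corollary of the preceding Lemma~\ref{lem:M_g_NI} combined with the definition~\eqref{eq:collision_free_primes} of $P^I$. Recall that $P^I$ consists of all primes $M$ satisfying the collision-freeness condition $|I_{\bmod M}|=|I|$, and Lemma~\ref{lem:M_g_NI} has already established that this equality holds for every integer $M > N_I$, regardless of primality.

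Concretely, I would take an arbitrary prime $M$ with $M > N_I$ and verify the two membership conditions for $P^I$: by hypothesis $M$ is prime, and by Lemma~\ref{lem:M_g_NI} applied to this $M$ we immediately obtain $|I_{\bmod M}|=|I|$. Hence $M \in P^I$, and since $M$ was an arbitrary prime exceeding $N_I$, the containment claimed in the lemma follows.

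There is essentially no obstacle here; the statement is a packaging of the previous lemma into the language of the set $P^I$, presumably stated separately because later sections will want to invoke ``there exist primes in $P^I$ of a given size'' without re-deriving the collision-freeness each time. The only thing to keep in mind when writing the proof is that one should not claim any kind of converse (small primes can of course also lie in $P^I$); the lemma only asserts one direction of inclusion, and the proof should reflect this.
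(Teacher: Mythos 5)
Your proof is correct and matches the paper's own argument, which simply cites Lemma~\ref{lem:M_g_NI}: every prime $M>N_I$ satisfies $|I_{\bmod M}|=|I|$ by that lemma, hence lies in $P^I$ by definition \eqref{eq:collision_free_primes}. Your spelled-out version (including the remark that only one direction of inclusion is claimed) is exactly the intended reasoning.
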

\begin{proof}
See Lemma \ref{lem:M_g_NI}.
\end{proof}

\section{Generate reconstructing multiple rank\mbox{-}1 lattices}\label{sec:main_results}

We consider a frequency set $I\subset\Z^d$ of finite cardinality $T=|I|$
and fix one element $\zb k\in I$.
The probability that another element $\zb h\in I\setminus\{\zb k\}$ aliases to this specific $\zb k$, 
while sampling along a random rank\mbox{-}1 lattice is  estimated in the next lemma.

\begin{lemma}\label{lem:count_aliasing_probability1}
We fix a frequency $\zb k\in I\subset\Z^d$, $|I|=T<\infty$, and a prime number $M$ such that $|I_{\bmod M}|=|I|$, cf. \eqref{eq:def_ImodM}. In addition, we choose a generating vector $\zb z\in[0,M-1]^d$ at random. Then, with probability of at most
$\frac{T-1}{M}$ the frequency $\zb k$ aliases to at least one other frequency $\zb h\in I\setminus\{\zb k\}$.
\end{lemma}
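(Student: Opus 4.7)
The plan is to translate the aliasing event into a linear congruence on the generating vector and then apply a union bound over the $T-1$ potential collision partners.

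First, I would recall that two frequencies $\zb k,\zb h\in I$ alias with respect to the rank\mbox{-}1 lattice $\Lambda(\zb z,M)$ exactly when the corresponding rows of the Fourier matrix $\zb A(\Lambda(\zb z,M),I)$ coincide, i.e.\ when
\begin{equation*}
(\zb k-\zb h)\cdot\zb z\equiv 0\imod M.
\end{equation*}
So the event that $\zb k$ aliases to some $\zb h\in I\setminus\{\zb k\}$ is the union, over $\zb h\in I\setminus\{\zb k\}$, of the events $E_{\zb h}:=\{\zb z\in[0,M-1]^d\cap\Z^d\colon (\zb k-\zb h)\cdot\zb z\equiv 0\imod M\}$.

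Next I would bound $|E_{\zb h}|$ for a single fixed $\zb h\neq\zb k$. Because $M$ is assumed to be a prime satisfying $|I_{\bmod M}|=|I|$, the difference $\zb k-\zb h$ cannot be the zero vector modulo $M$ (otherwise $\zb k$ and $\zb h$ would coincide in $I_{\bmod M}$, contradicting $|I_{\bmod M}|=|I|$). Hence at least one component $k_{j_0}-h_{j_0}$ is nonzero modulo $M$, and since $M$ is prime this component is a unit in $\Z/M\Z$. The congruence $(\zb k-\zb h)\cdot\zb z\equiv 0\imod M$ therefore becomes a single nontrivial linear equation in the $d$ unknowns $z_1,\ldots,z_d\in\Z/M\Z$. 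Fixing the remaining $d-1$ components arbitrarily, the component $z_{j_0}$ is uniquely determined modulo $M$, so there are exactly $M^{d-1}$ solutions. The probability that a uniformly chosen $\zb z\in[0,M-1]^d\cap\Z^d$ lies in $E_{\zb h}$ is thus $M^{d-1}/M^d=1/M$.

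Finally I would apply the union bound over the at most $T-1$ choices of $\zb h\in I\setminus\{\zb k\}$, giving
\begin{equation*}
\Pr\!\left(\bigcup_{\zb h\in I\setminus\{\zb k\}}E_{\zb h}\right)\le\sum_{\zb h\in I\setminus\{\zb k\}}\Pr(E_{\zb h})\le\frac{T-1}{M},
\end{equation*}
which is the claimed bound. The only real subtlety is the use of primality of $M$ to ensure that the single linear congruence modulo $M$ has exactly $M^{d-1}$ solutions in $(\Z/M\Z)^d$; without primality one would have to argue via the invariant factors of $\zb k-\zb h$ modulo $M$, which would weaken the bound. Together with the collision-free hypothesis $|I_{\bmod M}|=|I|$ (via Lemma~\ref{lem:ImodM}), this is precisely what makes the argument clean.
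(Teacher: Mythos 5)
Your proposal is correct and follows essentially the same route as the paper's proof: both reduce the aliasing of a fixed pair to the linear congruence $(\zb k-\zb h)\cdot\zb z\equiv 0\imod M$, use the hypothesis $|I_{\bmod M}|=|I|$ together with the primality of $M$ to invert a nonzero component and conclude that exactly $M^{d-1}$ of the $M^d$ generating vectors satisfy it, and then finish with the union bound over the $T-1$ remaining frequencies. The only cosmetic difference is that the paper first passes to the residue set $I_{\bmod M}$ via Lemma~\ref{lem:ImodM}, whereas you argue directly that $\zb k-\zb h\not\equiv\zb 0\imod M$.
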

\begin{proof}
First we take advantage of $|I_{\bmod M}|=|I|$ and determine the probability that one specific $\zb h_{\zb k'}\in I_{\bmod M}\setminus\{\zb h_{\zb k}\}$
aliases to the fixed $\zb h_{\zb k}$, i.e.,
\begin{align}
\left(\zb h_{\zb k}-\zb h_{\zb k'}\right)\cdot \zb z\equiv 0\bmod M\label{eq:equiv_one_fi1}
\end{align}
holds.
Since $\zb k\neq\zb k'$ and $\zb h_{\zb k}\neq\zb h_{\zb k'}$ there exists a dimension index $r$ such that
$h_{\zb k,r}\neq h_{\zb k',r}$.
For any choice of
the $d-1$ components $z_1,\ldots,z_{r-1},z_{r+1},\ldots, z_{d}$ of the generating vector $\zb z$, we determine exactly one $z_{r}$ with
$$
[0,M-1]\ni z_{r}=b\sum_{\substack{u=1\\u\neq r}}^d(h_{\zb k,u}-h_{\zb k',u})z_u\bmod M,
$$
where $b:=\{t\in[1,M-1]\cap\N\colon(h_{\zb k', r}-h_{\zb k, r})t\equiv 1\imod M\}$ is the uniquely defined multiplicative inverse of $h_{\zb k', r}-h_{\zb k, r}$ modulo $M$. Accordingly, we observe \eqref{eq:equiv_one_fi1}.
Consequently, a proportion of $1/M$ choices of vectors $\zb z$ will cause an aliasing of the frequencies
$\zb h_{\zb k}$ and $\zb h_{\zb k'}$ or $\zb k$ and $\zb k'$, cf. Lemma \ref{lem:ImodM}. By the union bound, the probability that a vector $\zb z$, that is uniformly chosen at random from $[0,M-1]^d\cap\Z^d$, yields an aliasing of the frequency $\zb k$ to at least one other frequency within $I$ is bounded from above by $\frac{T-1}{M}$.
\end{proof}
Using a specific number of rank\mbox{-}1 lattices as sampling scheme provides for the unique reconstruction
of the trigonometric polynomial $p\in\Pi_I$ with a certain probability,
which can be beneficially estimated by the aid of Hoeffding's inequality \cite {Hoeff63}.
\begin{theorem}\label{thm:gen_mr1l}
We consider the frequency set $I\subset\Z^d$ of cardinality $T$ and fix an element $\zb k\in I$.
In addition,  we fix $\delta\in(0,1)$ and we determine two numbers
\begin{align}
\lambda&\ge c(T-1),\qquad c>1,\label{eq:def_lambda1}\\
s&=\ceil{\left(\frac{c}{c-1}\right)^2\frac{\ln T-\ln \delta}{2}}\label{def:s}
\intertext{and the set of the $n\in\N$ smallest prime numbers in $P^I$, cf. \eqref{eq:collision_free_primes}, larger than $\lambda$}
P^{I}_{\lambda,n}&:=\left\{p_j\in P^I\colon p_j=\begin{cases}
\min\{p \in P^I\colon p>\lambda\}&\colon j=1\\
\min\{p \in P^I\colon p>p_{j-1}\}&\colon j=2,\ldots,n.
\end{cases}
 \right\}\label{eq:def_PIlambdan}
\end{align}
We choose $s$ numbers $M_\ell$, $\ell=1,\ldots,s$, randomly from $P^{I}_{\lambda, n}$.
For each of the numbers $M_\ell$, which are not necessarily distinct, we choose
a generating vector $\zb z_\ell\in [0,M_\ell-1]^d$ at random.
Then, the probability that the frequency $\zb k\in I$ aliases to any other frequency within $I$ for each rank\mbox{-}1 lattice $\Lambda(\zb z_\ell,M_\ell)$ is bounded from above by $\frac{\delta}{T}$.
\end{theorem}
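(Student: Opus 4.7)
The plan is to reduce the claim to Hoeffding's inequality applied to independent $\{0,1\}$-valued indicators, where the per-lattice aliasing probability has already been bounded by Lemma~\ref{lem:count_aliasing_probability1}.

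For each $\ell\in\{1,\ldots,s\}$ I would introduce the indicator random variable $X_\ell\in\{0,1\}$ that equals one iff the frequency $\zb k$ aliases to at least one $\zb h\in I\setminus\{\zb k\}$ when sampling along the rank\mbox{-}1 lattice $\Lambda(\zb z_\ell,M_\ell)$. Since every $M_\ell$ lies in $P^I_{\lambda,n}\subset P^I$, we have $|I_{\bmod M_\ell}|=|I|$, so Lemma~\ref{lem:count_aliasing_probability1} applies conditionally on $M_\ell$ and gives $\E[X_\ell\mid M_\ell]\le \frac{T-1}{M_\ell}$. Because every $M_\ell>\lambda\ge c(T-1)$ by construction, unconditioning yields $\E[X_\ell]\le \frac{1}{c}$.

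Next, since the $s$ pairs $(M_\ell,\zb z_\ell)$ are drawn independently, the $X_\ell$ are independent Bernoulli random variables taking values in $[0,1]$. The event whose probability we want to control is $\{X_1=\cdots=X_s=1\}=\{\sum_{\ell=1}^{s}X_\ell=s\}$, and on this event the deviation from the mean satisfies
\begin{equation*}
\sum_{\ell=1}^{s}X_\ell-\E\Bigl[\sum_{\ell=1}^{s}X_\ell\Bigr]\ \ge\ s-\frac{s}{c}\ =\ \frac{s(c-1)}{c}.
\end{equation*}
Applying Hoeffding's inequality \cite{Hoeff63} with $a_\ell=0$, $b_\ell=1$ gives
\begin{equation*}
\Pr\Bigl[\sum_{\ell=1}^{s}X_\ell=s\Bigr]\ \le\ \exp\!\Bigl(-\frac{2\bigl(s(c-1)/c\bigr)^2}{s}\Bigr)\ =\ \exp\!\Bigl(-\frac{2s(c-1)^2}{c^2}\Bigr).
\end{equation*}
Substituting the choice $s\ge \bigl(\tfrac{c}{c-1}\bigr)^2\tfrac{\ln T-\ln\delta}{2}$ from~\eqref{def:s} reduces this bound precisely to $\exp(\ln\delta-\ln T)=\delta/T$, which is the assertion.

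The only point requiring some care is making the Hoeffding setup clean: the indicators $X_\ell$ must be genuinely independent (which follows from the independence of the random draws of the pairs $(M_\ell,\zb z_\ell)$), and the unconditional expectation bound $\E[X_\ell]\le 1/c$ must be derived from the conditional bound of Lemma~\ref{lem:count_aliasing_probability1} together with $M_\ell>c(T-1)$. Once these are in place, the rest is just a calibration of the Hoeffding tail; the value of $s$ in~\eqref{def:s} has been picked exactly so that the exponential bound matches the target $\delta/T$, so no further estimates are needed.
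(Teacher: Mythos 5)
Your proposal is correct and follows essentially the same route as the paper's own proof: both introduce the per-lattice aliasing indicators, bound their mean by $\frac{1}{c}$ via Lemma~\ref{lem:count_aliasing_probability1} together with $M_\ell>\lambda\ge c(T-1)$, and apply Hoeffding's inequality to the event $\bigl\{\sum_{\ell=1}^{s}X_\ell=s\bigr\}$ before calibrating with the choice of $s$ in~\eqref{def:s}. Your explicit conditioning on $M_\ell$ and subsequent unconditioning is a slightly more careful phrasing of the mean bound than the paper's, but the argument is the same.
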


\begin{proof}
For the fixed frequency $\zb k\in I$, we define the random variables
$$
Y_\ell^{\zb k}:=\begin{cases}
0&\colon \zb k \textnormal{ does not alias to another frequency within $I$ using $\Lambda(\zb z_\ell,M_\ell)$},\\
1&\colon \zb k \textnormal{ aliases to at least one other frequency within $I$ using $\Lambda(\zb z_\ell,M_\ell)$}.\\
\end{cases}
$$
The random variables $Y_\ell^{\zb k}$, $\ell=1,\ldots, s$, are independent and identically distributed with a specific mean $\mu\le\frac{T-1}{\min_{p\in P^I_{\lambda,n}}p}<\frac{T-1}{\lambda}\le\frac{1}{c}$, due to Lemma \ref{lem:count_aliasing_probability1}.
Hoeffding's inequality allows for the estimate
\begin{align}
\mathcal{P}\left\{\sum_{\ell=1}^sY_\ell^{\zb k}=s\right\} &=\mathcal{P}\left\{s^{-1}\sum_{\ell=1}^sY_\ell^{\zb k}-\mu=1-\mu\right\}\le\mathcal{P}  \left\{s^{-1}\sum_{\ell=1}^sY_\ell^{\zb k}-\mu\ge 1-\varepsilon-\mu\right\}\\
&\le \e^{-2s\left(1-\varepsilon-\mu\right)^2}= \e^{-2s\left(\frac{c-1}{c}\right)^2}
\le\e^{\ln \delta-\ln T}=\frac{\delta}{T}\label{eq:proof_of_thm:gen_mr1l}
\end{align}
for the specific choice $\varepsilon=\frac{1}{c}-\mu>0$.
\end{proof}

If we choose the rank\mbox{-}1 lattice sizes $M_1,\ldots,M_s$ relatively prime, the resulting multiple rank\mbox{-}1 lattice
$\Lambda(\zb z_1,M_1,\ldots,\zb z_s,M_s)$  is actually a structured subsampling of the rank\mbox{-}1 lattice 
$\Lambda(\zb z',M')$, where  the generating vector and the lattice size are given by 
$\zb z'=\sum_{r=1}^s(\prod_{\substack{l=1\\l\neq r}}^{s}M_l)\zb z_r$ and $M'=\prod_{r=1}^sM_r$, respectively, cf. \cite[Cor. 2.2]{Kae16}. Since specific properties of rank\mbox{-}1 lattices are extensively investigated, our studies on multiple rank\mbox{-}1 lattices as subsampling schemes of single rank\mbox{-}1 lattices may profit from this.
On this account, we modify the results from Theorem \ref{thm:gen_mr1l} to these requirements.

\begin{corollary}\label{cor:distinct_mr1l}
Doing the same as in Theorem \ref{thm:gen_mr1l} but without replacement of the $M_\ell$, i.e., the used $M_\ell$ are pairwise distinct which requires $n\ge s$, we get the analogous result.
\end{corollary}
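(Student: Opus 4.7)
The plan is to reduce the corollary to Theorem~\ref{thm:gen_mr1l} by conditioning on the (random but now pairwise distinct) selection $M_1,\ldots,M_s$. Concretely, I would first observe that the sampling of the $M_\ell$ and the sampling of the generating vectors $\zb z_\ell$ are done in two independent stages: once the $M_\ell$ have been drawn without replacement from $P^I_{\lambda,n}$, the vectors $\zb z_\ell\in [0,M_\ell-1]^d$ are still picked uniformly and mutually independently. Hence, conditional on any fixed realisation $(M_1,\ldots,M_s)$, the aliasing indicators $Y_\ell^{\zb k}$ from the proof of Theorem~\ref{thm:gen_mr1l} are independent (though no longer identically distributed).

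Next, I would invoke Lemma~\ref{lem:count_aliasing_probability1} for each $\ell$: conditional on $M_\ell$, the probability that the fixed $\zb k\in I$ aliases to some $\zb h\in I\setminus\{\zb k\}$ under $\Lambda(\zb z_\ell,M_\ell)$ is at most $(T-1)/M_\ell$. Since every $M_\ell$ is drawn from $P^I_{\lambda,n}$, this is bounded by $(T-1)/\lambda<1/c$. Hence each $Y_\ell^{\zb k}\in\{0,1\}$ has conditional mean $\mu_\ell<1/c$, uniformly in $\ell$.

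Finally, I would apply Hoeffding's inequality in the slightly more general form for independent, bounded, not necessarily identically distributed random variables. Writing $\bar\mu:=s^{-1}\sum_{\ell=1}^s\mu_\ell<1/c$ and choosing the same cut-off $\varepsilon=1/c-\bar\mu>0$ as in the proof of Theorem~\ref{thm:gen_mr1l}, the identical calculation
\[
\mathcal{P}\!\left\{\sum_{\ell=1}^sY_\ell^{\zb k}=s\;\Big|\;M_1,\ldots,M_s\right\}
\le\e^{-2s\left(\frac{c-1}{c}\right)^2}\le\frac{\delta}{T}
\]
goes through line by line, using $Y_\ell^{\zb k}\in[0,1]$. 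As this conditional bound is independent of the concrete realisation of $(M_1,\ldots,M_s)$, taking the expectation over the draw of the $M_\ell$ (without replacement from $P^I_{\lambda,n}$, which requires $n\ge s$) yields the same unconditional bound $\delta/T$.

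I do not expect a genuine obstacle here: the whole argument is a routine conditioning step. The only subtlety worth stating explicitly is that one must use the non-i.i.d.\ version of Hoeffding's inequality, but since all variables are $[0,1]$-valued and the estimate $\mu_\ell<1/c$ is uniform in $\ell$, the exponent $2s((c-1)/c)^2$ and hence the choice of $s$ from~\eqref{def:s} remain unchanged.
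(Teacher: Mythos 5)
Your proof is correct, but it takes a genuinely different route from the paper. The paper disposes of the corollary in one sentence by citing \cite[Theorem 4]{Hoeff63}, the result that Hoeffding's probability bounds for sums of random samples drawn \emph{with} replacement remain valid for samples drawn \emph{without} replacement from a finite population; the i.i.d.\ argument of Theorem \ref{thm:gen_mr1l} is then declared to carry over verbatim. You instead condition on the realisation $(M_1,\ldots,M_s)$, observe that the indicators $Y_\ell^{\zb k}$ are then independent Bernoulli variables with means $\mu_\ell\le (T-1)/M_\ell<1/c$ uniformly in $\ell$ (by Lemma \ref{lem:count_aliasing_probability1}), apply the non-identically-distributed form of Hoeffding's inequality to get the bound $\e^{-2s((c-1)/c)^2}\le\delta/T$, and integrate out the $M_\ell$. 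Your argument is longer but buys two things: it is self-contained at the level of the standard Hoeffding inequality, sidestepping the question of whether the two-stage sampling (lattice sizes without replacement, generating vectors independently on top) literally fits the finite-population setting of Hoeffding's Theorem 4; and it proves something slightly stronger, namely that the conclusion holds for \emph{any} joint distribution of the $M_\ell$ supported on $P^I_{\lambda,n}$ --- the without-replacement structure is irrelevant, since only the uniform bound $\mu_\ell<1/c$ and the conditional independence of the $\zb z_\ell$ enter. The paper's citation-based proof, in turn, buys brevity and keeps the with/without-replacement parallel explicit, which is the conceptual point the corollary is meant to record.
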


\begin{proof}
Due to \cite[Theorem 4]{Hoeff63} the bounds on probabilities for random samples without replacement coincide with those of similar random variables with replacement.
\end{proof}

Another simple conclusion allows for the estimate of the probability
that we can uniquely determine all Fourier coefficients $\hat{p}_{\zb k}$, $\zb k\in I$, from the sampling values at the nodes of the multiple rank\mbox{-}1 lattice constructed by the approach of Theorem \ref{thm:gen_mr1l}.

\begin{Theorem}\label{thm:prob_bound_T}
Choosing $s$, $\lambda$ and $\Lambda(\zb z_\ell,M_\ell)$, $\ell=1,\ldots,s$, as stated in Theorem \ref{thm:gen_mr1l} or Corollary \ref{cor:distinct_mr1l}, the probability that  the Fourier matrix $\zb A\left(\bigcup_{\ell=1}^s\Lambda(\zb z_\ell,M_\ell),I\right)$ has full column rank is bounded from below by $1-\delta$.
\end{Theorem}
\begin{proof}
We refer to the proof of Theorem \ref{thm:gen_mr1l} and estimate
$$
1-\mathcal{P}\left(\bigcap_{\zb k\in I}\left\{\sum_{\ell=1}^sY_\ell^{\zb k}<s\right\}\right)=\mathcal{P}\left(\bigcup_{\zb k\in I}\left\{\sum_{\ell=1}^sY_\ell^{\zb k}=s\right\}\right)\le
\sum_{\zb k\in I}\mathcal{P}\left(\sum_{\ell=1}^sY_\ell^{\zb k}=s\right)\le  T\frac{\delta}{T}.
$$
using the union bound. Let us assume that
$\bigcap_{\zb k\in I}\left\{\sum_{\ell=1}^sY_\ell^{\zb k}<s\right\}$ occurs.
Then, the proof of \cite[Thm 4.4]{Kae16}
yields the full column rank of the Fourier matrix $\zb A\left(\bigcup_{\ell=1}^s\Lambda(\zb z_\ell,M_\ell),I\right)$.
\end{proof}

\begin{algorithm}[tb]
\caption{Determining reconstructing multiple rank\mbox{-}1 lattices (Theorems \ref{thm:gen_mr1l} and \ref{thm:prob_bound_T})}\label{alg:construct_mr1l_uniform}
  \begin{tabular}{p{1.4cm}p{4.05cm}p{8.8cm}}
    Input: 	& $T\in \N$ 		& upper bound on the cardinality of a frequency set $I$\\
    		& $d\in \N$			& dimension of the frequency set $I$\\
    		& $N\in \N$			& upper bound on the expansion of the frequency set $I$\\
    		& $\delta\in (0,1)$	& upper bound on failure probability\\
    		& $c\in\R$, $c>1$	& minimal oversampling factor
  \end{tabular}
		
  \begin{algorithmic}[1]
	\State $c=\max\left\{c,\frac{N}{T-1}\right\}$\label{alg:construct_mr1l_uniform:line_c_readjust}
  	\State $\lambda=c(T-1)$
  	\State $s=\ceil{\left(\frac{c}{c-1}\right)^2\frac{\ln T-\ln \delta}{2}}$
  	\State $M=\argmin_{p \textnormal{ prime}}\{p>\lambda\}$
  	\For{$\ell=1 \textnormal{ to }s$}
		\State choose $\zb z_\ell$ from $[0,M-1]^d\cap\Z^d$ uniformly at random
  	\EndFor
  \end{algorithmic}
  \begin{tabular}{p{1.4cm}p{4.05cm}p{8.84cm}}
    Output: & $M$ 									& lattice size of rank\mbox{-}1 lattices and\\
		    & $\zb z_1,\ldots,\zb z_s$ 				& generating vectors of rank\mbox{-}1 lattices such that\\
		    & $\Lambda(\zb z_1,M,\ldots,\zb z_s,M)$ & is a reconstructing multiple rank\mbox{-}1 lattice\\
	    	&										& for $I$ with probability at least $1-\delta$
\\    \cmidrule{1-3}
    \multicolumn{2}{l}{Complexity: $\OO{\lambda\log\log\lambda+ds}$}&{\small for fixed $c>1$: $\lambda\sim\max\{T,N\}$ and $s\sim\log T-\log\delta$}  
    \end{tabular}
\end{algorithm}

Due to Theorem \ref{thm:prob_bound_T},
Theorem \ref{thm:gen_mr1l} describes an approach that determines reconstructing sampling sets for
trigonometric polynomials supported on given frequency sets $I$ with probability at least $1-\delta_s$,
where 
\begin{align}
\delta_s=T\e^{-2\left(\frac{c-1}{c}\right)^2s}\label{eq:def:delta_s}
\end{align}
is an upper bound on the probability that the approach fails.
A slight simplification of the strategy is given as Algorithm \ref{alg:construct_mr1l_uniform}, where
we avoid to construct the set $P^{I}_{\lambda,1}$. To this end, we readjust the oversampling factor $c$
in line \ref{alg:construct_mr1l_uniform:line_c_readjust} such that the set $P^{I}_{\lambda,1}$ is forced to consist of the smallest prime larger than $\lambda$, which is contained in the interval $(\lambda,2\lambda]$ due to Bertrand's postulate and can be found using the sieve of Eratosthenes in $\OO{\lambda\log\log\lambda}$. We solely use this prime number as lattice size for each of the single rank\mbox{-}1 lattice parts of the output of Algorithm \ref{alg:construct_mr1l_uniform}, i.e., we fix $n=1$.
Furthermore, Algorithm \ref{alg:construct_mr1l_uniform_different_primes}
uses the same strategy as Algorithm \ref{alg:construct_mr1l_uniform} with
the difference that the output of Algorithm \ref{alg:construct_mr1l_uniform_different_primes}
is a multiple rank\mbox{-}1 lattice that consists of single rank\mbox{-}1 lattices with pairwise distinct
lattice sizes. At this point, we would like to discuss the choice of the rank\mbox{-}1 lattice
sizes $M_j$ in Algorithm \ref{alg:construct_mr1l_uniform_different_primes} line \ref{alg:construct_mr1l_uniform_different_primes:line_Mj}. 
The theoretical considerations in Corollary \ref{cor:distinct_mr1l} suggests
to choose $s$ distinct prime numbers $M_j$ from $P^I_{\lambda,n}$ at random without
replacement.
Due to the readjustment of $c$, cf. line \ref{alg:construct_mr1l_uniform_different_primes:line_readjust_c},
the $s$ smallest prime numbers larger than $\lambda$ are contained in $P^{I}_{\lambda,s}$, i.e., we choose $n=s$.
Since we have to choose each element of 
$P^{I}_{\lambda,s}$ exactly once and the choices of the generating vectors do not depend on each other,
the arrangement of the lattice sizes does not matter. Consequently, we can disregard the
randomness in the choice of the lattice sizes $M_1,\ldots,M_s$.
A simple example will demonstrate the advantages of the readjustment of $c$ in line \ref{alg:construct_mr1l_uniform:line_c_readjust} of Algorithm \ref{alg:construct_mr1l_uniform}.
In addition, the example treats the readjustment of $c$ in Algorithm \ref{alg:construct_mr1l_uniform_different_primes} as well.
\begin{Example}
The readjustment of $c$ in Algorithm \ref{alg:construct_mr1l_uniform}, cf. line \ref{alg:construct_mr1l_uniform:line_c_readjust},
allows for lower numbers of sampling nodes within the output of Algorithm \ref{alg:construct_mr1l_uniform}.
For instance, we assume $T=|I|=11$, $N_I=80$, $c=2$, and $\delta=\frac{11}{\e^6}\approx 0.0273$. Line \ref{alg:construct_mr1l_uniform:line_c_readjust} will readjust $c=8$ in that case.

We consider the above discussed approaches without using the readjustment compared to the indicated Algorithms \ref{alg:construct_mr1l_uniform} and \ref{alg:construct_mr1l_uniform_different_primes} in the following table. 
The number
$M=1-s+\sum_{r=1}^sM_r$ within the two lines right at the bottom is the number of sampling values that we achieve using the approach of distinct
prime lattice sizes. 
The other rows show $M$, which is in fact an upper bound on the number of the sampling values within $|\Lambda(\zb z_1,M_1,\ldots,\zb z_s,M_s)|$
since we may observe $|\Lambda(\zb z_j,M_j)\cap\Lambda(\zb z_r,M_r)|>1$ for $1\le j<r\le s$.
\begin{center}
\begin{tabular}{lrrrr|rrr}
\toprule
&$c$&$T$&$N_I$&$\delta$&$s$&$\lambda$&upper bound on $M$\\
\midrule
fixed $c$, $M_r\in P_{\lambda,1}^I$&$2$&$11$&$80$&$11\e^{-6}$&$12$&$80$&$985$\\
Algorithm \ref{alg:construct_mr1l_uniform}&$8$&$11$&$80$&$11\e^{-6}$&$4$&$80$&$329$\\
\midrule\midrule
fixed $c$, distinct $M_r\in P_{\lambda,s}^I$&$2$&$11$&$80$&$11\e^{-6}$&$12$&$80$&$1\,325$\\
Algorithm \ref{alg:construct_mr1l_uniform_different_primes}&$8$&$11$&$80$&$11\e^{-6}$&$4$&$80$&$367$\\
\bottomrule
\end{tabular}
\end{center}
\end{Example}

We would like to stress the highly interesting fact, that Algorithm \ref{alg:construct_mr1l_uniform}
as well as Algorithm \ref{alg:construct_mr1l_uniform_different_primes}
does not require the input of the frequency set $I$.
The only input parameters we need
are in some sense the key data of the frequency set $I$, i.e., the cardinality $T$ and the expansion $N$ 
of the frequency set $I$.
This observation leads to the following point of view.
Applying Algorithm \ref{alg:construct_mr1l_uniform} or \ref{alg:construct_mr1l_uniform_different_primes} with fixed $T$, $N$, $\delta$, and $c$, leads to
a multiple rank\mbox{-}1 lattice.
Then, with probability at least $1-\delta$ this multiple rank\mbox{-}1 lattice
is a reconstructing multiple rank\mbox{-}1 lattice for a frequency set $I$
with cardinality $|I|\le T$ and upper bound $N$ on the expansion $N_I$.
The crucial difference to the following considerations is that
the frequency set $I$ itself needs not to be known.

\begin{sloppypar}
Nevertheless, the multiple rank\mbox{-}1 lattices that are built using Algorithms \ref{alg:construct_mr1l_uniform} and \ref{alg:construct_mr1l_uniform_different_primes} do not guarantee the reconstruction property, i.e., the  full rank of the Fourier matrix $\zb A$, cf.~\eqref{eqn:Fourier_matrix}, for specific, given frequency sets $I$.
In general, this reconstruction property can not be checked easily. 
For instance, the computation of the echelon form of the matrix $\zb A(\Lambda(\zb z_1,M_1,\ldots,\zb z_s,M_s),I)$
or (lower bounds on) the smallest singular value of $\zb A(\Lambda(\zb z_1,M_1,\ldots,\zb z_s,M_s),I)$
(may) prove the reconstruction property. Corresponding test methods have a complexity in $\Omega(|I|^2)$, which will
cause huge computational costs if the frequency set $I$ has a high cardinality.
Accordingly, the direct validation of the reconstruction property is not convenient.
For that reason, we suggest to check the conditions on the multiple rank\mbox{-}1 lattice
$\zb A(\Lambda(\zb z_1,M_1,\ldots,\zb z_s,M_s),I)$ that
leads to the statements in Theorems \ref{thm:gen_mr1l} and \ref{thm:prob_bound_T}, i.e., we consider
the sets
$$
I_r:=\{\zb k\in I\colon \zb k\cdot\zb z_r\not\equiv\zb h\cdot\zb z_r\bmod{M_r}\text{ for all }\zb h\in I\setminus\{\zb k\} \},\quad r=1,\ldots,s.
$$
Our theoretical considerations in the proofs of Theorem \ref{thm:prob_bound_T}
estimated the probability that each $\zb k\in I$ is contained in at least one $I_r$, $r=1,\ldots,s$.
In other words, we estimated the probability that the equation 
\begin{align}
\bigcup_{r=1}^sI_r=I\label{def:union_Ir}
\end{align}
holds.
The complexity of the computation of each $I_r$ is in $\OO{(d+\log |I|)|I|}$. The union
of all $I_r$, $r=1,\ldots,s$ is also computable in $\OO{ds|I|\log (s|I|)}$, i.e., bounded by $C_{\delta}\,d\,|I|\log^2|I|$, where $C_\delta$ depends on $\delta$ and does not depend on the cardinality $|I|$ or the dimension $d$.
Accordingly, the computational costs to check \eqref{def:union_Ir} is in $\OO{d\,|I|\log^2|I|}$.
Certainly, this strategy will not detect each reconstructing multiple rank\mbox{-}1 lattice, cf. Example \ref{ex:mr1l_with_aliasing}, since equation \eqref{def:union_Ir} is sufficient but not necessary for the reconstruction property.
\end{sloppypar}

\begin{algorithm}[tb]
\caption{Determining reconstructing multiple rank\mbox{-}1 lattices with pairwise distinct lattice sizes (Corollary \ref{cor:distinct_mr1l} and Theorem \ref{thm:prob_bound_T})}\label{alg:construct_mr1l_uniform_different_primes}
  \begin{tabular}{p{1.4cm}p{4.05cm}p{8.8cm}}
    Input: 	& $T\in \N$ 		& upper bound on the cardinality of a frequency set $I$\\
    		& $d\in \N$			& dimension of the frequency set $I$\\
    		& $N\in \N$			& upper bound on the expansion of the frequency set $I$\\
    		& $\delta\in (0,1)$	& upper bound on failure probability\\
    		& $c\in\R$, $c>1$	& minimal oversampling factor
  \end{tabular}
		
  \begin{algorithmic}[1]
	\State $c=\max\left\{c,\frac{N}{T-1}\right\}$\label{alg:construct_mr1l_uniform_different_primes:line_readjust_c}
  	\State $\lambda=c(T-1)$
  	\State $s=\ceil{\left(\frac{c}{c-1}\right)^2\frac{\ln T-\ln \delta}{2}}$
  	\For{$\ell=1 \textnormal{ to }s$}
	  	\State $M_{\ell}=\argmin_{p \textnormal{ prime}}\{p>\max\{\lambda,M_1,\ldots,M_{\ell-1}\}\}$\label{alg:construct_mr1l_uniform_different_primes:line_Mj}
		\State choose $\zb z_{\ell}$ from $[0,M_{\ell}-1]^d\cap\Z^d$ uniformly at random
  	\EndFor
  \end{algorithmic}
  \begin{tabular}{p{1.4cm}p{4.05cm}p{8.84cm}}
    Output: & $M_1,\ldots,M_s$ 							& lattice sizes of rank\mbox{-}1 lattices and\\
		    & $\zb z_1,\ldots,\zb z_s$ 					& generating vectors of rank\mbox{-}1 lattices such that\\
		    & $\Lambda(\zb z_1,M_1,\ldots,\zb z_s,M_s)$ & is a reconstructing multiple rank\mbox{-}1 lattice\\
	    	&											& for $I$ with probability at least $1-\delta$
\\    \cmidrule{1-3}
    \multicolumn{2}{l}{Complexity: $\OO{\lambda\log\log\lambda+ds}$}&{\small for fixed $c$ and $\delta$: $\lambda\sim\max\{T,N\}$ and $s\sim\log T$}  
    \end{tabular}
\end{algorithm}

The last considerations give us a completely new perspective on the
construction of multiple rank\mbox{-}1 lattices. In general,
each $\zb k\in I$ may be contained in more than one $I_r$, $r\in\{1,\ldots,s\}$. 

On the one hand, it may happen that the equation
$\bigcup_{r=1}^{\tilde{s}}I_r=I$ with $\tilde{s}<s$ holds.
In that case, even the multiple rank\mbox{-}1 lattice $\Lambda(\zb z_1,M_1,\ldots,\zb z_{\tilde s},M_{\tilde s})$ is a reconstructing rank\mbox{-}1 lattice for the frequency set $I$, i.e., the last $s-\tilde{s}$ rank\mbox{-}1 lattices do not provide additional information and we can save $M_{\tilde{s}+1}+\ldots+M_s-s+\tilde{s}$ sampling points without loosing information about the trigonometric polynomial $p\in\Pi_I$.

On the other hand, it may happen that the inequality
$\bigcup_{r=1}^{s}I_r<I$ holds since there is a significant probability that
Algorithm \ref{alg:construct_mr1l_uniform} (or Algorithm \ref{alg:construct_mr1l_uniform_different_primes}, respectively) fails. Choosing a few additional rank\mbox{-}1 lattices may built a reconstructing multiple rank\mbox{-}1 lattice for $I$.

From this perspective, one can choose successively lattice sizes $M_\ell$ from $P^{I}_{\lambda,n}$ and generating vectors $\zb z_{\ell}$ from $[0,M_\ell-1]^d$, $\ell=1,\ldots$. The bound $\delta_s$, cf. \eqref{eq:def:delta_s}, decreases exponentially in $s$, which
implies that we can stop our strategy at a large enough $\tilde{s}$. In fact, a simple stopping criterion is easy to determine.
The set
\begin{align}
\bigcup_{r=1}^{\tilde{s}}\left\{\zb k\in I\colon \zb k\cdot\zb z_r\not\equiv\zb h\cdot\zb z_r\bmod{M_r}\text{ for all }\zb h\in I\setminus\{\zb k\}\right\}\label{eq:tilde_I}
\end{align}
must be equal to the frequency set $I$.

Moreover, the equality
\begin{align*}
\bigcup_{r=1}^{\ell-1}&\left\{\zb k\in I\colon \zb k\cdot\zb z_r\not\equiv\zb h\cdot\zb z_r\bmod{M_r}\text{ for all }\zb h\in I\setminus\{\zb k\}\right\}\\
&=\bigcup_{r=1}^{\ell}\left\{\zb k\in I\colon \zb k\cdot\zb z_r\not\equiv\zb h\cdot\zb z_r\bmod{M_r}\text{ for all }\zb h\in I\setminus\{\zb k\}\right\}
\end{align*}
yields that the additional sampling values on the sampling nodes that come from $\Lambda(\zb z_{\ell},M_{\ell})$ do not 
contribute to get closer to the goal, which is the equality of the frequency set $I$ and the joined frequency set in \eqref{eq:tilde_I}.
From this point of view, 
we can leave out $\Lambda(\zb z_{\ell},M_{\ell})$ without loosing significant information.

We recapitulate the
strategy based on Theorem \ref{thm:gen_mr1l} and the last-mentioned ideas in Algorithm \ref{alg:construct_mr1l_I} and estimate the number of sampling nodes $M=|\Lambda(\zb z_1,M_1,\ldots,\zb z_{\tilde{s}},M_{\tilde{s}})|$ of the outputs of Algorithm \ref{alg:construct_mr1l_I}.

With probability not less than $1-\delta$, $0<\delta<1$, Algorithm \ref{alg:construct_mr1l_I} determines a reconstructing multiple rank\mbox{-}1 lattice $\bigcup_{\ell=1}^{\tilde{s}}\Lambda(\zb z_\ell,M_\ell)$, $\tilde{s}\le s$ from \eqref{def:s} , for the frequency set $I$, which consists of at most 
\begin{align}
M:=|\Lambda(\zb z_1,M_1,\ldots,\zb z_{\tilde{s}},M_{\tilde{s}})|\le 1-\tilde{s}+\sum_{\ell=1}^{\tilde{s}}M_\ell\le s\max_{p\in P^{I,\lambda}_n}p\label{eq:upper_bound_M_general}
\end{align}
sampling nodes.
Since, we estimate $\tilde{s}\le s$ in \eqref{eq:upper_bound_M_general}, the estimate also holds for Algorithm \ref{alg:construct_mr1l_uniform}.
For the same reason, the statement of the following corollary
will also be valid for Algorithm~\ref{alg:construct_mr1l_uniform}.

\begin{Corollary}\label{cor:estimate_M_I_basic}
Let $0<\delta<1$, $1<c\in\R$, and $I\subset\Z^d$ a frequency set
of cardinality $T=|I|\ge 2$, $T<\infty$. We determine $\lambda$ and $s$ as defined in \eqref{eq:def_lambda1} and \eqref{def:s}.
Furthermore, we assume $N_I\le\lambda:=c(T-1)$, cf. \eqref{eq:def_expansion}, and we fix $n=1$.
Due to Bertrand's postulate and Lemma \ref{lem:PI_NI},
Algorithm \ref{alg:construct_mr1l_I}
composes a reconstructing multiple rank\mbox{-}1 lattice $\Lambda(\zb z_1,M_1,\ldots,\zb z_{\tilde{s}},M_{\tilde{s}})$ for the frequency set $I$ that consists of at most
\begin{align}
M\le 1-\tilde{s}+\sum_{\ell=1}^{\tilde{s}}M_\ell\le \ceil{\left(\frac{c}{c-1}\right)^2\frac{\ln T-\ln\delta}{2}}2c(T-1)
<C_{c,\delta}T\,\ln{T}
\label{eq:upper_bound_M_T_general}
\end{align}
sampling nodes
with probability $1-\delta$.
For the sake of completeness, we give a simple upper bound on 
$$C_{c,\delta}\le 2c\left(\left(\frac{c}{c-1}\right)^2\frac{1-\log_2\delta}{2}+\log_2\e\right),$$
that depends logarithmically on the upper bound $\delta$ of the failure probability.
\qed
\end{Corollary}
Not surprisingly, the reconstructing multiple rank\mbox{-}1 lattices that are constructed using Algorithm \ref{alg:construct_mr1l_I}
need some oversampling.
However, the occurring oversampling factors $\frac{M}{T}$
are bounded by a term $C_{c,\delta}\,\ln T$ with probability $1-\delta$ in the case that the expansion of the frequency set $I$ is bounded by $N_I\le c(T-1)$.
Consequently, the oversampling factor scales at most logarithmically in $T$ 
and is independent on the dimension $d$ of the frequency set $I$.

\begin{algorithm}[tb]
\caption{Determining reconstructing multiple rank\mbox{-}1 lattices
using the ideas in the context of \eqref{eq:tilde_I}
}\label{alg:construct_mr1l_I}
  \begin{tabular}{p{1.4cm}p{4.05cm}p{8.8cm}}
    Input: 	& $I\subset\Z^d$ 	& frequency set\\
    		& $c\in\R$, $c>1$	& minimal oversampling factor\\
    		& $n\in\N$			& upper bound on the number of distinct lattice sizes
  \end{tabular}
		
  \begin{algorithmic}[1]
  	\State $\lambda=c(|I|-1)$
	\State $P^{I}_{\lambda,n}=\{p_j\colon  p_j\text{ is the $j$th smallest prime satisfying }\lambda<p_j \text{ and } |I_{\bmod p_j}|=|I|\}$, cf. \eqref{eq:def_ImodM}
	\State $\tilde{I}=\emptyset$
	\State $\tilde{s}=0$
	\While{$|\tilde{I}|<|I|$}
		\State $\tilde{s}=\tilde{s}+1$
		\State choose $M_{\tilde{s}}$ from $P^{I}_{\lambda,n}$ at random
		\State choose $\zb z_{\tilde{s}}$ from $[0,M_{\tilde{s}}-1]^d\cap\Z^d$ uniformly at random
		\If {$\{\zb k\in I\colon \not\exists \zb h\in I\setminus\{\zb k\}\text{ with }\zb k\cdot \zb z_{\tilde{s}}\equiv\zb h\cdot \zb z_{\tilde{s}}\imod{M_{\tilde{s}}}\}\not\subset \tilde{I}$}
	    \State compute $\tilde{I}=\tilde{I}\cup \{\zb k\in I\colon \not\exists \zb h\in I\setminus\{\zb k\}\text{ with }\zb k\cdot \zb z_{\tilde{s}}\equiv\zb h\cdot \zb z_{\tilde{s}}\imod{M_{\tilde{s}}}\}$
	    \Else\label{alg:construct_mr1l_I_line:avoid_useless_r1l_1}
			\State	$\tilde{s}=\tilde{s}-1$
	    \EndIf\label{alg:construct_mr1l_I_line:avoid_useless_r1l_2}
	\EndWhile
  \end{algorithmic}
  \begin{tabular}{p{1.4cm}p{4.05cm}p{8.84cm}}
    Output: & $M_1,\ldots,M_{\tilde{s}}$ & lattice sizes of rank\mbox{-}1 lattices and\\
	    & $\zb z_1,\ldots,\zb z_{\tilde{s}}$ & generating vectors of rank\mbox{-}1 lattices such that\\
	    & $\Lambda(\zb z_1,M_1,\ldots,\zb z_{\tilde{s}},M_{\tilde{s}})$ &  is a reconstructing multiple rank\mbox{-}1 lattice for $I$
\\    \cmidrule{1-3}
    \multicolumn{3}{l}{Complexity: $\OO{|I|(\log|I|+d)\log|I|}$\quad{\small w.h.p.~for $c(|I|-1)\geq N_I$, $n\lesssim\log |I|$, and $c$ fixed}}
  \end{tabular}
\end{algorithm}

As mentioned above, we will also follow the approach of multiple rank\mbox{-}1 lattices as subsampling schemes of huge single rank\mbox{-}1 lattices.
To this end, we will construct reconstructing multiple rank\mbox{-}1 lattices that consists
of rank\mbox{-}1 lattices which have pairwise distinct lattice sizes $M_1,\ldots,M_{\tilde{s}}$, cf. Algorithm \ref{alg:construct_mr1l_I_distinct_primes}.
Due to Corollary \ref{cor:distinct_mr1l} and Theorem \ref{thm:prob_bound_T} the estimate
\begin{align}
M:=|\Lambda(\zb z_1,M_1,\ldots,\zb z_{\tilde{s}},M_{\tilde{s}})|\le \tilde{s}\,\max_{p\in P^{I,\lambda}_{\tilde{s}}}p\le s\max_{p\in P^{I,\lambda}_s}p
\label{eq:estimate_M_distinct_primes_basic}
\end{align}
hold for
$2\le|I|=T<\infty$, $1>\delta>0$, $1<c\in\R$, $\lambda$ and $s$ as
determined in \eqref{eq:def_lambda1} and \eqref{def:s}
with probability $1-\delta$.
In order to estimate the number of sampling nodes of the outputs of Algorithm \ref{alg:construct_mr1l_I_distinct_primes},
we need an upper bound on the largest prime within $P^{I}_{\lambda,s}$.
Taking into account some additional requirements on the number $\lambda$, the inclusion
$P^{I}_{\lambda,s}\subset(\lambda,2\lambda]$ holds.
A similar strategy is already applied in \cite[L.9]{ArGiRo16}.

Since we use the number $s$ as upper bound on $\tilde{s}$ in \eqref{eq:estimate_M_distinct_primes_basic}, the estimate of the following corollary hold for Algorithm \ref{alg:construct_mr1l_I_distinct_primes} and for 
Algorithm \ref{alg:construct_mr1l_uniform_different_primes} as well.

\begin{Corollary}\label{cor:estimate_M_Alg2}
We consider an arbitrary frequency set $I$, $2\le|I|=T<\infty$, with expansion $N_I$, cf. \eqref{eq:def_expansion}. Furthermore, we fix $0<\delta<1$, $1<c\in\R$, and determine $s=\ceil{\left(\frac{c}{c-1}\right)^2\frac{\ln T-\ln\delta}{2}}$ and $\lambda=\max\{c(T-1),N_I,4s\ln s\}$.
Then, with probability not less than $1-\delta$, Algorithm \ref{alg:construct_mr1l_I_distinct_primes} determines a reconstructing multiple rank\mbox{-}1 lattice $\bigcup_{\ell=1}^{\tilde{s}}\Lambda(\zb z_\ell,M_\ell)$ for the frequency set $I$, which consists of at most 
\begin{align}
M\le 2\ceil{\left(\frac{c}{c-1}\right)^2\frac{\ln T-\ln\delta}{2}}\max\{c(T-1),N_I,4s\ln s\}
\label{eq:estimate_M_distinct_primes}
\end{align}
sampling nodes.
If $c(T-1)$ is the dominating term in the maximum in \eqref{eq:estimate_M_distinct_primes}, we estimate
\begin{align}
M\le \ceil{\left(\frac{c}{c-1}\right)^2\frac{\ln T-\ln\delta}{2}}2c(T-1)<C_{c,\delta}T\,\ln T.\label{eq:estimate_M_distinct_primes_1}
\end{align}

\end{Corollary}
\begin{proof}
Since the statement in \eqref{eq:estimate_M_distinct_primes_basic} is a direct consequence of Theorem \ref{thm:prob_bound_T}, the goal is to prove the embedding  $P^{I,\lambda}_s \subset(\lambda,2\lambda]$ which yields \eqref{eq:estimate_M_distinct_primes}.

Due to the fact that $\lambda\ge N_I$ holds, the set $P^I$ contains all primes within $(\lambda,2\lambda]$, cf. Lemma \ref{lem:PI_NI}.
Thus, we have to ensure, that the interval $(\lambda,2\lambda]$ contains at least $s$ different prime numbers
$p_j$, $j=1,\ldots,s$. We follow the argumentation in the proof of \cite[L. 9]{ArGiRo16}
and distinguish four different cases:\newline
\underline{$s\ge 4$.}
We have $\lambda\ge4\,s\ln s>22$.
Due to \cite[Cor. 3]{RoSchoe62}, the number of primes within the interval $(\lambda,2\lambda]$ is bounded from below by $\frac{3\lambda}{5\ln\lambda}$ for all $\lambda\ge20.5$.
We consider the mapping $\lambda\mapsto \frac{3\lambda}{5\ln\lambda}$ for $\lambda>22$ and observe that this mapping monotonically increases with $\lambda$. 
We estimate
\begin{align*}
\frac{3\lambda}{5\ln\lambda}\ge s\frac{\ln{s^{12/5}}}{\ln(4 s\,\ln{s})}\ge s
\end{align*}
for $s^{12/5}\ge 4 s\ln{s}$, which is fulfilled for $s\ge 4$.
\newline
\underline{$s=3$.}
We determine $\lambda\ge 12\ln 3>13$ and
know that for $\lambda>22$ there exist at least $4$ prime numbers within
the interval $(\lambda,2\lambda]$.
The same interval for $13<\lambda\le22$
should contain at least $3$ prime numbers.
We determine
\begin{align*}
\{17,19,23\}&\subset(\lambda,2\lambda]\qquad\text{for}\qquad\lambda\in [11.5,17)\\
\{23,29,31\}&\subset(\lambda,2\lambda]\qquad\text{for}\qquad\lambda\in [17,22]
\end{align*}
Hence, for all $\lambda\ge 11.5$ the interval $(\lambda,2\lambda]$ contains at least $3$  different prime number.
\newline
\underline{$s=2$.}
We determine $\lambda\ge 8\ln{2}>5.5$.
With the argumentations above, we determine two prime numbers within the sets
$(\lambda,2\lambda]$.
\begin{align*}
\{7,11\}&\subset(\lambda,2\lambda]\qquad\text{for}\qquad\lambda\in [5.5,7)\\
\{11,13\}&\subset(\lambda,2\lambda]\qquad\text{for}\qquad\lambda\in [7,11)\\
\{17,19\}&\subset(\lambda,2\lambda]\qquad\text{for}\qquad\lambda\in [11,17)
\end{align*}
For larger $\lambda$ we have at least $3$  prime numbers in the interval under consideration.
\newline
\underline{$s=1$.}
We determine $\lambda\ge c(T-1)>1$. Due to Bertrand's postulate, there exists at least one
prime in the interval $(\lambda,2\lambda]$.

The term $4s\,\ln s$ within the termination of $\lambda$ ensures the existence of sufficiently many
prime numbers that are bounded from above by $2\lambda$ as well as large enough, i.e., larger than $\lambda$.
According to Corollary \ref{cor:distinct_mr1l}, Algorithm \ref{alg:construct_mr1l_I_distinct_primes} determines a multiple rank\mbox{-}1 lattice of pairwise distinct prime sizes less or equal to $2\lambda$ with probability at least $1-\delta$.
We call $M$ the total number of sampling nodes within the output of Algorithm \ref{alg:construct_mr1l_I_distinct_primes} and observe the estimates in \eqref{eq:estimate_M_distinct_primes} and \eqref{eq:estimate_M_distinct_primes_1}.
\end{proof}
The last corollary provides a specific estimate of the number of sampling nodes
within a reconstructing multiple rank\mbox{-}1 lattice  that is determined by Algorithm \ref{alg:construct_mr1l_I_distinct_primes}. We notice that the statement is almost the same as in Corollary
\ref{cor:estimate_M_I_basic}. The single difference is that an estimate $M\le 2sc(T-1)\le C_{c,\delta}T\,\ln{T}$ requires $\lambda:=c(T-1)\ge 4s\,\ln{s}$ in addition to $\lambda:=c(T-1)\ge N_I$.
A rough estimate yields $s\,\ln{s}\le s^2\in\OO{\log^2{T}}$
for fixed $c$ and $\delta$. Thus, a large enough $T$ 
implies the inequality $c(T-1)\ge 4s\,\ln{s}$.
Assuming \eqref{eq:estimate_M_distinct_primes_1} is fulfilled, the sampling sets that are determined
using Algorithm \ref{alg:construct_mr1l_I_distinct_primes} need oversampling, where
the oversampling factor scales logarithmically in terms of the cardinality $T$ of the frequency set $I$.

At this point, we would like to  stress on the modifications of Algorithm \ref{alg:construct_mr1l_I_distinct_primes}.
We deterministically choose the lattice sizes $M_\ell$
in Line \ref{alg:construct_mr1l_I_distinct_primes:determinstic_not_random_lattice_sizes}.
The theoretical results are based on the idea that we choose $s$ rank\mbox{-}1 lattices of distinct sizes
at random, where the lattice sizes are contained in the set $P^{I}_{\lambda,s}$. Since we have to choose each element of 
$P^{I}_{\lambda,s}$ exactly once and the choices of the generating vectors do not depend on each other,
the arrangement of the lattice sizes does not matter.
Moreover, the termination criterion of Algorithm \ref{alg:construct_mr1l_I_distinct_primes} is the one that we used in Algorithm \ref{alg:construct_mr1l_I}, which is already discussed above, cf. the context of \eqref{eq:tilde_I}.
This termination criterion may cause that the number $\tilde{s}$ of rank\mbox{-}1 lattices that are determined by Algorithm  \ref{alg:construct_mr1l_I_distinct_primes} is much less than the number $s$, cf. \eqref{def:s}, that is theoretically determined.
Taking this into account, the growing sequence of lattice sizes $M_1,\ldots,M_{\tilde{s}}$ may avoid
the largest possible lattice sizes.

Another point to mention is that Algorithm \ref{alg:construct_mr1l_I_distinct_primes}
constructs a reconstructing multiple rank\mbox{-}1 lattice only with high probability, provided that $\delta$ is small.
Possibly, the output of Algorithm \ref{alg:construct_mr1l_I_distinct_primes}
is not a reconstructing multiple rank\mbox{-}1 lattice for the frequency set $I$.
In detail, if $\tilde{s}=s$ and $\tilde I\subsetneq I$ hold, 
Algorithm \ref{alg:construct_mr1l_I_distinct_primes} is failing.
Clearly, one can simply detect these cases and may avoid
the corrupt output by restarting Algorithm \ref{alg:construct_mr1l_I_distinct_primes}
or increasing $s$, where the latter approach leads
to multiple rank\mbox{-}1 lattices with a number of sampling nodes 
that may not allow for the estimate in \eqref{eq:estimate_M_distinct_primes}.

\begin{algorithm}[tb]
\caption{Determining reconstructing multiple rank-1 lattices  with pairwise distinct lattice sizes
using the ideas in the context of \eqref{eq:tilde_I}
}\label{alg:construct_mr1l_I_distinct_primes}
  \begin{tabular}{p{1.4cm}p{5.05cm}p{7.8cm}}
    Input: 	& $I\subset\Z^d$ 	& frequency set\\
    		& $c\in(1,\infty)\subset\R$ 	& oversampling factor\\
    		& $\delta\in(0,1)\subset\R$			& upper bound on failure probability
  \end{tabular}
		
  \begin{algorithmic}[1]
  	\State $s=\ceil{\left(\frac{c}{c-1}\right)^2\frac{\ln T-\ln\delta}{2}}$
  	\State $\lambda=c(T-1)$
	\State determine $P^{I}_{\lambda,s}$, cf. \eqref{eq:def_PIlambdan}, and arrange $p_1<\ldots<p_s$
	\State $\tilde{I}=\emptyset$
	\State $\tilde{s}=0$
	\While {$|\tilde{I}|<|I|$ and $\tilde{s}<s$}
		\State $\tilde{s}=\tilde{s}+1$
		\State choose $M_{\tilde{s}}=p_{\tilde{s}}\in P^{I}_{\lambda,s}$\label{alg:construct_mr1l_I_distinct_primes:determinstic_not_random_lattice_sizes}
		\State choose $\zb z_{\tilde{s}}$ from $[0,M_{\tilde{s}}-1]^d\cap\Z^d$ uniformly at random
		\If {$\{\zb k\in I\colon \not\exists \zb h\in I\setminus\{\zb k\}\text{ with }\zb k\cdot \zb z_{\tilde{s}}\equiv\zb h\cdot \zb z_{\tilde{s}}\imod{M_{\tilde{s}}}\}\not\subset \tilde{I}$}
	    \State compute $\tilde{I}=\tilde{I}\cup \{\zb k\in I\colon \not\exists \zb h\in I\setminus\{\zb k\}\text{ with }\zb k\cdot \zb z_{\tilde{s}}\equiv\zb h\cdot \zb z_{\tilde{s}}\imod{M_{\tilde{s}}}\}$
	    \Else\label{alg:construct_mr1l_I_distinct_primes_line:avoid_useless_r1l_1}
			\State	$\tilde{s}=\tilde{s}-1$
	    \EndIf\label{alg:construct_mr1l_I_distinct_primes_line:avoid_useless_r1l_2}
	\EndWhile
  \end{algorithmic}
  \begin{tabular}{p{1.4cm}p{5.05cm}p{7.84cm}}
    Output: & $M_1,\ldots,M_{\tilde{s}}$ & lattice sizes of rank\mbox{-}1 lattices and\\
	    & $\zb z_1,\ldots,\zb z_{\tilde{s}}$ & generating vectors of rank\mbox{-}1 lattices such that\\
	    & $\Lambda(\zb z_1,M_1,\ldots,\zb z_{\tilde{s}},M_{\tilde{s}})$ &  is a reconstructing multiple rank\mbox{-}1 lattice\\
	   	&											& for $I$ with probability at least $1-\delta$  
\\    \cmidrule{1-3}
    \multicolumn{2}{l}{Complexity: $\OO{|I|(\log|I|+d)\log|I|}$}&{\small w.h.p.~for $c(|I|-1)\geq N_I$ and fixed $c$ and $\delta$}
  \end{tabular}
\end{algorithm}

Anyway, a simple example deals with the result of Corollary \ref{cor:estimate_M_Alg2}.
\begin{Example}
We fix $c=3$ and $\delta=\frac{1}{100}$.
For a frequency set $I\subset\Z^d$ with $T:=|I|\ge 64$ and $N_I\le 3(T-1)$, we apply Algorithm \ref{alg:construct_mr1l_I_distinct_primes} in order to construct a reconstructing multiple rank\mbox{-}1 lattice $\Lambda(\zb z_1,M_1,\ldots,\zb z_\ell,M_\ell)$ for $I$.
Then Algorithm \ref{alg:construct_mr1l_I_distinct_primes} will succeed with a total number of sampling values $M=1-s+\sum_{j=1}^\ell M_\ell< 16\,T\,\ln T$ with probability greater than $1-\delta=0.99$.
\end{Example}
\begin{proof}
We estimate
\begin{align*}
s&\le \left(\frac{c}{c-1}\right)^2\frac{\ln T-\ln\delta}{2}+1=
\frac{9}{4}\,\frac{\ln T+\ln 100}{2}+1
<\frac{9}{4}\,\frac{\ln T+\frac{10}{9}\ln 64}{2}+1
\\
&<\frac{19}{8}\,\ln T+1<\frac{21}{8}\,\ln T
\\
4s\,\ln s&\le 3\,(T-1)
\intertext{for $T\ge 64$.
Consequently, the equality $\lambda=3\,(T-1)$ in Cor. \ref{cor:estimate_M_Alg2} holds. The number $M$ of used sampling values is bounded by}
M&\le 2\lambda s\le 6\,(T-1)\,\frac{21}{8}\,\ln T< \frac{63}{4}\,T\,\ln T.
\end{align*}
\end{proof}

\section{Heuristic upgrades}\label{sec:heu_upgr}
In this Section, we consider Theorem \ref{thm:prob_bound_T} in more detail and iteratively apply the ideas therein.

\subsection{Iteratively determined lattices}\label{sec:decreasing_lambda}

We denote $I_1=I$, $T_1=T$, $\lambda_1=\lambda$ and fix the number $n=1$ in Theorem \ref{thm:gen_mr1l}, i.e., the set $P^{I_1}_{\lambda_1,1}$ contains only one prime number $M_1$ larger than $\lambda_1$.
If the strategies described in Section \ref{sec:main_results} succeeds, each Fourier coefficient of a specific frequency within $I_1$ can be uniquely reconstructed
using one of the $s$ rank\mbox{-}1 lattices $\Lambda(\zb v_\ell,M_1)$, $\ell=1,\ldots,s$, that are determined according to Theorem \ref{thm:gen_mr1l}, cf.
Theorem \ref{thm:prob_bound_T} and its interpretation in the context of \eqref{def:union_Ir}. This yields
\begin{align}
I_1&=\bigcup_{\ell=1}^{s}\{\zb k\in I_1\colon\zb k\cdot\zb v_\ell\not\equiv\zb h\cdot\zb v_\ell\imod{M_1},\,\forall\zb h\in I_1\setminus\{\zb k\} \}\nonumber
\intertext{and we estimate}
T&\le\sum_{\ell=1}^s\left|\{\zb k\in I_1\colon\zb k\cdot\zb v_\ell\not\equiv\zb h\cdot\zb v_\ell\imod{ M_1},\,\forall\zb h\in I_1\setminus\{\zb k\} \}\right|\nonumber\\
\frac{T}{s}&\le\max_{\ell=1,\ldots,s}\left|\{\zb k\in I_1\colon\zb k\cdot\zb v_\ell\not\equiv\zb h\cdot\zb v_\ell\imod{M_1},\,\forall\zb h\in I_1\setminus\{\zb k\} \}\right|.\label{eq:choose_best_of_s_vectors}
\end{align}
Accordingly, with probability at least $1-\delta$ 
we can choose $\ell'\in\{1,\ldots,s\}$, determine $\zb z_1=\zb v_{\ell'}$ such that
sampling along the rank\mbox{-}1 lattice $\Lambda(\zb z_1,M_1)$ allows for the unique
reconstruction of at least
$\frac{T}{s}$ Fourier coefficients of each polynomial $p_1=\sum_{\zb k\in I_1}\hat{p}_{\zb k}\e^{2\pi\ii\zb k\cdot\zb \circ}\in\Pi_{I_1}$.
Let $\Lambda(\zb z_{1},M_1)$ be a rank\mbox{-}1 lattice coming from the ideas above. We define
\begin{align*}
\tilde{I}_1=&\{\zb k\in I_1\colon\zb k\cdot\zb z_{1}\not\equiv\zb h\cdot\zb z_{1}\bmod M_1,\,\forall\zb h\in I_1\setminus\{\zb k\} \},
\end{align*}
where we assume that $\Lambda(\zb z_{1},M_1)$ is a rank\mbox{-}1 lattice such that $|\tilde{I}_1|\ge\frac{T}{s}$
holds.
We determine the Fourier coefficients of $p_1$
$$
\hat{p}_{\zb k}=M_{1}^{-1}\sum_{l=0}^{M_{1}-1}p_1\left(\frac{j}{M_{1}}\zb z_{1}\right)\e^{-2\pi\ii\frac{l}{M_{1}}\zb k\cdot\zb z_{1}}
$$
for all frequencies $\zb k\in \tilde{I}_1$ using a single one-dimensional FFT.
The essential steps are the computation of the frequency set $\tilde{I}_1$ and a rank\mbox{-}1 lattice FFT, cf. \cite[Alg. 3.2]{kaemmererdiss}, which have a complexity in $\OO{(d+\log{T_1})T_1}$ and $\OO{M_{1}\log M_{1}+dT_1}$, respectively.
Therefore the total complexity of this approach is in $\OO{M_{1}\log M_{1}+(d+\log T_1)T_1}$.
In a next step we consider the polynomial
\begin{align}
p_2(\zb x)=p_1(\zb x)-\sum_{\zb k\in \tilde{I}_1}\hat{p}_{\zb k}\e^{2\pi\ii\zb k\cdot\zb x}=\sum_{\zb k\in I_1\setminus \tilde{I}_1}\hat{p}_{\zb k}\e^{2\pi\ii\zb k\cdot\zb x}\label{eq:reduced_polynomial}
\end{align}
and reduce the problem to the reconstruction of the trigonometric polynomial $p_2\in\Pi_{I_2}$ with a frequency set $I_2=I_1\setminus\tilde{I}_1$ of lower cardinality. We apply Theorems \ref{thm:gen_mr1l} with $n=1$ and \ref{thm:prob_bound_T} to the trigonometric polynomial $p_2$ in \eqref{eq:reduced_polynomial}.
In this way we apply this strategy successively until the frequency set $I_j=\emptyset$ is empty.
Algorithm \ref{alg:construct_mr1l_3} depicts the construction of the sampling sets that
are formed using this strategy. A detailed analysis of the algorithm leads to the 
following findings.

\begin{algorithm}[tb]
\caption{Determining reconstructing multiple rank-1 lattices (Theorem \ref{thm:decreasing_lambda})}\label{alg:construct_mr1l_3}
  \begin{tabular}{p{1.4cm}p{5.05cm}p{7.8cm}}
    Input: 	& $I\subset\Z^d$ 	& frequency set\\
    		& $c\in(1,\infty)\subset\R$ 	& oversampling factor\\
    		& $\delta\in(0,1)\subset\R$			& upper bound on failure probability
  \end{tabular}
  		
  \begin{algorithmic}[1]
  \State $T_1=|I|$
  \State $\ell=0$
  \While{$|I|>0$}
  	\State $\ell=\ell+1$
  	\State $T=|I|$
	\State $s=\ceil{\left(\frac{c}{c-1}\right)^2\frac{\ln T + \ln T_1-\ln \delta}{2}}$
  	\State $\lambda=c(T-1)$
  	\State determine $M_{\ell}\in P^{I}_{\lambda,1}$\label{alg:construct_mr1l_3_determine_M_ell}
  	\For {$j=1,\ldots,s$}
		\State choose $\zb v_j\in[0,M_{\ell}-1]^d\cap\Z^d$ uniformly at random
		\State determine $K_j=\left|\left\{\zb k\in I\colon \zb k\cdot\zb v_j\not\equiv\zb h\cdot\zb v_j\imod{M_{\ell}},\,\forall\zb h\in I\setminus\{\zb k\}\right\}\right|$
	\EndFor
	\If{$\max_{j=1,\ldots,s}K_j\ge 1$}
	\State choose $j_0\in\{t\colon K_{t}=\max_{j=1,\ldots,s}K_j\}$ and determine $\zb z_\ell=\zb v_{j_0}$
	\State $I=I\setminus\{\zb k\in I\colon \zb k\cdot\zb z_{\ell}\not\equiv\zb h\cdot\zb z_{\ell}\imod{M_{\ell}},\,\forall\zb h\in I\setminus\{\zb k\}\}$
	\Else\label{alg:construct_mr1l_3_avoid_needless_lattices1}
		\State $\ell=\ell-1$
	\EndIf\label{alg:construct_mr1l_3_avoid_needless_lattices2}
  \EndWhile
  \end{algorithmic}
  \begin{tabular}{p{1.4cm}p{5.05cm}p{7.8cm}}
    Output: & $M_1,\ldots,M_{\ell}$ & lattice sizes of rank\mbox{-}1 lattices and\\
	    & $\zb z_1,\ldots,\zb z_{\ell}$ & generating vectors of rank\mbox{-}1 lattices such that\\
	    & $\Lambda(\zb z_1,M_1,\ldots,\zb z_{\ell},M_{\ell})$ &  is a reconstructing multiple rank\mbox{-}1 lattice
\\    \cmidrule{1-3}
    \multicolumn{2}{l}{Complexity: $\OO{d\,N_I\,|I|\log^2|I|}$}&{\small w.h.p.~for fixed $c$ and $\delta$}\\
    \multicolumn{2}{l}{\phantom{Complexity: }$\OO{d\,|I|\log^3|I|}$}&{\small w.h.p.~for $|I|\log^2|I|\gtrsim N_I^2$, fixed $c$ and $\delta$}
  \end{tabular}
\end{algorithm}

\begin{Theorem}\label{thm:decreasing_lambda}
We assume $I_1:=I\subset\Z^d$
is a frequency set of finite cardinality $|I_1|<\infty$,
$\delta\in\R$, $0<\delta<1$, and $c\in\R$, $c>1$ fixed real numbers.
In addition, we denote for $j=1,\ldots$
\begin{itemize}
\item $T_j:=|I_j|$,
\item $\lambda_j:=c(T_j-1)$,
\item $s_j:=\ceil{\left(\frac{c}{c-1}\right)^2\frac{\ln T_j+\ln T_1-\ln \delta}{2}}$,
\item $M_j\in P^{I_j}_{\lambda_j,1}$ prime number larger than $\lambda_j$,
\item $\zb z_j$ the best possible of $s_j$ chosen random generating vectors,
\item $I_{j+1}:=\{\zb k\in I_{j}\colon \exists\zb h\in I_{j}\setminus\{\zb k\} \text{ s.t. }\zb k\cdot\zb z_j\equiv\zb h\cdot\zb z_j\imod{M_j}\}$,
\end{itemize}
and a number $k:=\min\{\ceil{s_1\,\ln T_1},T_1\}$.
Then, with probability not less than $1-\frac{k\delta}{T_1}\ge 1-\delta$ the cardinality of the frequency set $I_{k+1}$ is zero, i.e.,
$\Lambda(\zb z_1,M_1,\ldots,\zb z_k,M_k)$ is a reconstructing multiple rank\mbox{-}1 lattice for $I$.
\end{Theorem}

\begin{proof}
We start with the frequency set $I_1$ and choose $s_1$ vectors from $[0,M_1-1]^d$ at random.
Due to Theorem \ref{thm:prob_bound_T}, Theorem \ref{thm:gen_mr1l}, and the considerations in the context of \eqref{eq:choose_best_of_s_vectors}, the best possible of the $s_1$ randomly chosen vectors provides
$T_2\le (1-1/s_1)\,T_1$ with probability of at least $1-\frac{\delta}{T_1}$, i.e., the probability of failing this
property is not greater than $\frac{\delta}{T_1}$.

Similar considerations can be done for $I_j$, $j=2,\ldots ,k$.
We have $s_j=\ceil{\left(\frac{c}{c-1}\right)^2\frac{\ln T_j-\ln \frac{\delta}{T_1}}{2}}$.
We apply Theorems \ref{thm:prob_bound_T} and \ref{thm:gen_mr1l} on $I_j$ and we determine the best of $s_j$ randomly chosen vectors.
With probability less or equal $\frac{\delta}{T_1}$ we observe $T_{j+1}>(1-1/s_j)\,T_j$. 
We estimate
\begin{align*}
\mathcal{P}\left(\bigcap_{j=1}^k\left\{T_{j+1}\le(1-1/s_j)\,T_j\right\}\right)&=1-\mathcal{P}\left(\bigcup_{j=1}^k\left\{T_{j+1}>(1-1/s_j)\,T_j\right\}\right)\\
&\ge 1-\sum_{j=1}^k\mathcal{P}\left(\left\{T_{j+1}>(1-1/s_j)\,T_j\right\}\right)\ge 1-\frac{k\delta}{T_1}.
\end{align*}
Now, we estimate the number $k$.
On the one hand, if $T_{j+1}\le(1-1/s_j)\,T_j$, $j=1\ldots, k$, hold, we have
$T_{k+1}\le T_1\prod_{j=1}^{k}(1-1/s_j)\le(1-1/s_1)^k\,T_1$ and in particular $T_{k+1}=0$ for
$(1-1/s_1)^k<T_1^{-1}$, which is fulfilled for
\begin{align}
k\ge s_1\,\ln T_1>\frac{\ln{T_1}}{\ln{s_1}-\ln(s_1-1)}.\label{eq:decreasing_lambda_k_est1}
\end{align}

On the other hand, since $\infty>s_j\ge 1$, the conditions $T_{j+1}\le(1-1/s_j)\,T_j$, $j=1\ldots, k$, imply
$$0\le T_{j+1}\le\begin{cases}
T_j-1&\colon T_j\geq 1,\\
0&\colon T_j=0,
\end{cases}$$
and $T_{T_1+1}= 0$, i.e., 
even if the estimate in \eqref{eq:decreasing_lambda_k_est1} yields $k\ge s_1\,\ln T_1$ the number $k$ of used lattices may be already bounded from above by $k\le T_1$.

In summary, with probability greater than $1-\frac{k\delta}{T_1}\ge 1-\delta$, we construct
a reconstructing multiple rank\mbox{-}1 lattice consisting of at most
$k=\min\{\ceil{s_1\,\ln T_1},T_1\}$ joined rank\mbox{-}1 lattices.
\end{proof}

The latter approach is indicated in Algorithm \ref{alg:construct_mr1l_3}.
An additional restriction on the expansion $N_I$ of the considered frequency set $I$ allows for a rough estimate of the number $M$ of sampling nodes of the reconstructing rank\mbox{-}1 lattice that is constructed by Algorithm \ref{alg:construct_mr1l_3}.

\begin{Corollary} \label{cor:estimate_M_decreasing_lambda}
We assume $I_1:=I\subset\Z^d$
is a frequency set of finite cardinality $T_1=|I_1|\ge 3$,
$\delta\in\R$, $0<\delta<1$, and $c\in\R$, $c>1$ fixed real numbers.
In addition, we assume $N_{I_1}\le\frac{c (T_1-1)}{\ln T_1}$.
Then, with probability at least $1-\delta$ Algorithm \ref{alg:construct_mr1l_3} constructs a reconstructing multiple rank\mbox{-}1 lattice $\Lambda(\zb z_1,M_1,\ldots,\zb z_\ell,M_\ell)$ of
\begin{align}
M\le 1-\ell+\sum_{j=1}^{\ell}M_j\le\left(2\left(\frac{c}{c-1}\right)^2\left(2\ln T_1-\ln \delta\right)+6\right)cT_1\label{eq:upper_bound_M_T_decreasing_lambda}
\end{align}
 sampling nodes.
\end{Corollary}

\begin{proof}
Due to Bertrand's postulate and Lemma \ref{lem:M_g_NI} we can find $M_j$, $j=1,\ldots,k$, in Theorem \ref{thm:decreasing_lambda}
such that $M_j\le 2\max\{\lambda_j,N_{I_j}\}\le 2\max\{\lambda_j,N_{I_1}\}$. Moreover, we know that the inequality
$\ell\le k$ holds with probability at least $1-\delta$. Accordingly, we estimate
\begin{align*}
M&\le 1-\ell+\sum_{j=0}^{\ell} M_j\le 2\sum_{j=1}^k\max\{\lambda_j,N_{I_1}\}
\le 2\left(kN_{I_1}+\sum_{j=1}^k \lambda_j\right)\\
&\le 2k N_{I_1}+2c\sum_{j=1}^k (T_j-1)
\le 2k \frac{c(T_1-1)}{\ln T_1}+2c\left(-k+\sum_{j=1}^k T_j\right)
\\
&\le 2ck \left(\frac{T_1-1}{\ln T_1}-1\right)+2cT_1\left(1+\sum_{j=1}^{k-1} \prod_{l=1}^j\left(1-1/s_{l}\right)\right)\\
&\le 2c \left(\left(s_1+\frac{1}{\ln T_1}\right)T_1\right)+2cT_1\sum_{j=0}^{\infty} \left(1-1/s_1\right)^j\\
&< 2cT_1\left(2s_1+\frac{1}{\ln T_1}\right)\overset{T_1>\e}{<}2(2s_1+1)cT_1
\le \left(2\left(\frac{c}{c-1}\right)^2\left(2\ln T_1-\ln \delta\right)+6\right)cT_1
\end{align*}
with probability at least $1-\delta$.
\end{proof}
We stress on the fact that the upper bound on the sampling nodes of a reconstructing multiple rank\mbox{-}1 lattice
in \eqref{eq:upper_bound_M_T_decreasing_lambda} is larger than the bound in \eqref{eq:upper_bound_M_T_general}.
Nevertheless the bounds are in the same order $\Theta(T_1\,\ln T_1)$ and $\Theta(\ln\delta)$ with respect to $T_1=|I_1|$ and $\delta$, respectively. Moreover, both upper bounds do not depend on the dimension $d$.

At this point, we comment on some characteristics of Algorithm \ref{alg:construct_mr1l_3}.
Against the theoretical considerations, we added the lines \ref{alg:construct_mr1l_3_avoid_needless_lattices1}--\ref{alg:construct_mr1l_3_avoid_needless_lattices2} in Algorithm \ref{alg:construct_mr1l_3}, that avoid to use a rank\mbox{-}1 lattice that does not
yield additional information compared to the already determined rank\mbox{-}1 lattices.
Furthermore, Algorithm \ref{alg:construct_mr1l_3} will not terminate until
a reconstructing rank\mbox{-}1 lattice for the input is determined.
Assuming Algorithm \ref{alg:construct_mr1l_3} will not terminate implies that
at one $\ell$ the while loop is an endless loop, which means that $|I|$
does not decrease. The lattice size $M_{\ell}$ is fixed in this endless loop.
Accordingly, we test $n s$, $n=1,\ldots$, vectors $\zb v_j$ for their reconstruction property.
Taking Theorem \ref{thm:gen_mr1l} and the estimates in \eqref{eq:proof_of_thm:gen_mr1l} into account,
we observe that the probability that a Fourier coefficient of a single fixed frequency within $I$ can not be reconstructed
using $n s$ randomly chosen vectors is at most  $\e^{-2ns\left(\frac{c-1}{c}\right)^2}$, which decays exponentially
in $n$. Thus, in practice we will not observe endless loops in Algorithm \ref{alg:construct_mr1l_3} and, consequently, the algorithm terminates.
Nevertheless, the output of Algorithm \ref{alg:construct_mr1l_3} may not fulfill the upper bound
in \eqref{eq:upper_bound_M_T_decreasing_lambda}.
From this point of view, each multiple rank-1 lattice $\Lambda(\zb z_1,M_1,\ldots,\zb z_\ell,M_\ell)$ which is the output of Algorithm \ref{alg:construct_mr1l_3}
is actually a reconstucting multiple rank-1 lattice for the frequency set $I$ under consideration. Consequently, the parameter $\delta$, the so-called upper bound on the failure probability, does not bound the probability that
the reconstruction property of the sampling set $\Lambda(\zb z_1,M_1,\ldots,\zb z_\ell,M_\ell)$ fails. However,
$\delta\in(0,1)$ is an upper bound on the probability that the number $\ell$ of joined rank-1 lattices is greater than $k$ in Theorem~\ref{thm:decreasing_lambda} and that the number of sampling nodes within $\Lambda(\zb z_1,M_1,\ldots,\zb z_\ell,M_\ell)$ does not fulfill inequality \eqref{eq:upper_bound_M_T_decreasing_lambda} in Corollary~\ref{cor:estimate_M_decreasing_lambda}.

\begin{algorithm}[tb]
\caption{Determining reconstructing MR1L (Theorem \ref{thm:decreasing_lambda})}\label{alg:construct_mr1l_4}
  \begin{tabular}{p{1.4cm}p{5.05cm}p{7.8cm}}
    Input: 	& $I\subset\Z^d$ 	& frequency set\\
    		& $c\in(1,\infty)\subset\R$ 	& oversampling factor\\
    		& $\delta\in(0,1)\subset\R$			& upper bound on failure probability
  \end{tabular}
  		
  \begin{algorithmic}[1]
	\Statex lines 1 to 7 identical with those of Algorithm \ref{alg:construct_mr1l_3}
	\makeatletter
	\setcounter{ALG@line}{7}
	\makeatother
  	\State determine $M_{\ell}=\min\left\{P^{I,\lambda}_{\ell}\setminus\{M_1,\ldots,M_{\ell-1}\}\right\}$
  	\Statex lines 9 to 19 identical with those of Algorithm \ref{alg:construct_mr1l_3}
  \end{algorithmic}
  \begin{tabular}{p{1.4cm}p{5.05cm}p{7.84cm}}
    Output: & $M_1,\ldots,M_{\ell}$ & lattice sizes of rank\mbox{-}1 lattices and\\
	    & $\zb z_1,\ldots,\zb z_{\ell}$ & generating vectors of rank\mbox{-}1 lattices such that\\
	    & $\Lambda(\zb z_1,M_1,\ldots,\zb z_{\ell},M_{\ell})$ &  is a reconstructing multiple rank\mbox{-}1 lattice
\\    \cmidrule{1-3}
    \multicolumn{2}{l}{Complexity: $\OO{d\,N_I\,|I|\log^2|I|}$}&{\small w.h.p.~for fixed $c$ and $\delta$}\\
    \multicolumn{2}{l}{\phantom{Complexity: }$\OO{d\,|I|\log^3|I|}$}&{\small w.h.p.~for $|I|\log^2|I|\gtrsim N_I^2$, fixed $c$ and $\delta$}
  \end{tabular}
\end{algorithm}

Furthermore, we would like to mention that the rank\mbox{-}1 lattice sizes $M_1,\ldots,M_{\ell}$, that are determined using Algorithm \ref{alg:construct_mr1l_3}, are not necessarily distinct, and thus, the determined reconstructing multiple rank\mbox{-}1 lattice is not necessarily a subsampling scheme of a huge single rank\mbox{-}1 lattice, cf. the context of Corollary \ref{cor:distinct_mr1l}.
A simple modification on line \ref{alg:construct_mr1l_3_determine_M_ell} of Algorithm \ref{alg:construct_mr1l_3}
provides even this nice property, cf. Algorithm \ref{alg:construct_mr1l_4}.
Certainly, an additional assumption $\lambda_1=c(T_1-1)\ge 4k\,\ln k$ in Corollary \ref{cor:estimate_M_decreasing_lambda} would guarantee the existence of
at least $k$ distinct primes within the interval $(\lambda_1,2\lambda_1]$, cf. Corollary \ref{cor:estimate_M_Alg2}  for details.
For $N_I$ small enough, an obvious estimate yields $\sum_{j=1}^\ell M_j\le 2kcT_1\in\OO{T_1\ln^2T_1}$, i.e., in comparison to the statement of Corollary \ref{cor:estimate_M_decreasing_lambda} the estimate suffers from an additional logarithmic term in $T_1$.
However, we did not exploit the decreasing $\lambda_j$, $j=1,\ldots, k$, in the obvious estimate.
Depending on $T_1$, $N_{I_1}$, $\delta$, and $c$, a detailed analysis of this estimate
might save a logarithmic term here.

\subsection{Stretching the set \texorpdfstring{$P^{I}$}{PI}}
\label{sec:stretching_PI}

Up to now, all presented algorithms need to choose the lattice sizes from a set $P^{I}_{\lambda,n}$, where
possibly $\min_{p\in P^{I}_{\lambda,n}}p\gg\lambda:=c(T-1)$, cf. \eqref{eq:def_lambda1}, occurs. In order to allow for the use of smaller
lattice sizes, we will weaken the requirements on the prime number sets
$P^{I}$.
To this end, we consider the set $I_{\bmod M}$, cf. \eqref{eq:def_ImodM}, in more detail and define the sets
\begin{align*}
I_{\bmod M}^{\textnormal{u}}:&=\{\zb h\in I_{\bmod M}\colon \zb h\equiv\zb k_1\imod{M}, \zb k_1\in I,\text{ and }\zb h\not\equiv \zb k_2 \imod{M}\forall \zb k_2\in I\setminus\{\zb k_1\}\}\\
I_{\bmod M}^{\textnormal{c}}:&=
\{
\zb h\in I_{\bmod M}\colon \exists \zb k_1,\zb k_2\in I, \zb k_1\neq\zb k_2, \text{ s.t. } \zb h\equiv \zb k_1\equiv \zb k_2\imod{M}
\}\\
&=I_{\bmod M}\setminus I_{\bmod M}^{\textnormal{u}}.
\end{align*}
The modulo $M$ operation on $I$ is a mapping from $I$ to $I_{\bmod M}$.
Accordingly, the sets $I_{\bmod M}^{\textnormal{u}}$ consists of the images $\zb h\in I_{\bmod M}$ of all uniquely mapped
vectors $\zb k\in I$ and the set
$I_{\bmod M}^{\textnormal{c}}$ contains the vectors $\zb h\in I_{\bmod M}$
that are the images of at least two different $\zb k_1,\zb k_2\in I$, i.e.,
the images of vectors from $I$ that collides under the modulo operation.
In addition, we define the sets  $I^{\textnormal{u}}_M$ as the inverse image of 
$I_{\bmod M}^{\textnormal{u}}$.

Sampling a trigonometric polynomial $p\in\Pi_I$ at a rank\mbox{-}1 lattice of lattice size $M$ reads as
\begin{align*}
p\left(\frac{j}{M}\zb z\right)=\sum_{\zb k\in I}\hat{p}_{\zb k}\e^{2\pi\ii\zb k\cdot\zb z\frac{j}{M}}
=\sum_{\zb k\in I}\hat{p}_{\zb k}\e^{2\pi\ii(\zb k\bmod{M})\cdot\zb z\frac{j}{M}}
=\sum_{\zb l\in I_{\bmod M} }\left(\sum_{\substack{\zb k\in I\\\zb l=\zb k\bmod M}}\hat{p}_{\zb k}\right)\e^{2\pi\ii\zb l\cdot\zb z\frac{j}{M}}.
\end{align*}
Accordingly, a reconstructing multiple rank\mbox{-}1 lattice for $I_{\!\bmod M}$ that consists of single rank\mbox{-}1 lattices of lattice sizes
$M_1=\ldots=M_s=M$ allow for the unique reconstruction of all
\begin{align}
\hat{\tilde{p}}_{\zb l}:=\sum_{\substack{\zb k\in I\\\zb l=\zb k\bmod M}}\hat{p}_{\zb k}.\label{eq:def_aliasing_modM}
\end{align}
Obviously, if $\zb k\in I_{M}^{\textnormal{u}}$ holds, the sum at the right hand side of \eqref{eq:def_aliasing_modM}
contains exactly one summand and we achieve $\hat{p}_{\zb k}=\hat{\tilde{p}}_{\zb k\bmod M}$, i.e., we uniquely reconstruct
all Fourier coefficients $\hat{p}_{\zb k}$, $\zb k\in I^{\textnormal{u}}_M$.
If we observe $I_{\bmod M}^{\textnormal{c}}\neq\emptyset$, we cannot reconstruct all Fourier coefficients of $p\in\Pi_I$
in this way. However, a straightforward strategy similar to the approach in Section \ref{sec:decreasing_lambda}
can be applied. We consider the trigonometric polynomial
\begin{align}
p_2(\zb x)=p(\zb x)-\sum_{\zb k\in I_{M}^{\textnormal{u}}}\hat{p}_{\zb k}\e^{2\pi\ii\zb k\cdot\zb x}=\sum_{\zb k\in I\setminus I_{M}^{\textnormal{u}}}\hat{p}_{\zb k}\e^{2\pi\ii\zb k\cdot\zb x}\label{eq:reduced_polynomial1}
\end{align}
in a next step.

At this point, we would like to mention that it is not necessary that the sets $I_{\bmod M_j}^{\textnormal{u}}$, $j=1,\ldots,\ell$ are non-empty in order to uniquely reconstruct all trigonometric polynomials with frequencies supported on $I$ using the multiple rank\mbox{-}1 lattice $\Lambda(\zb z_1,M_1,\ldots,\zb z_\ell,M_\ell)$, which is illustrated in the following example.

\begin{example}\label{ex:mr1l_with_aliasing}
Let the frequency set
$$
I=\{\zb k_1,\ldots,\zb k_6\}\subset\Z^d
$$
with $k_{1,j}=\ldots=k_{6,j}$, $j=2,\ldots,d$, and $k_{j,1}=(0,2,5,7,16,21)^\top$, i.e., the set $I$
is located on a line that is  parallel to the first coordinate axis.
We consider the multiple rank\mbox{-}1 lattice $\Lambda(\zb z_1,M_1,\zb z_2,M_2,\zb z_3,M_3)$ with
$z_{j,1}=1$ for $j=1,2,3$ and $M_1=2$, $M_2=3$, $M_3=5$, and
we observe that $I_{\bmod M_j}^{\textnormal{u}}=\emptyset$, $j=1,2,3$, hold.
Nevertheless, the Fourier matrix
$\zb A(\Lambda(\zb z_1,M_1,\zb z_2,M_2,\zb z_3,M_3),I)$ has full column rank.
\end{example}

According to the last example, even the requirement $I_{\bmod M}^{\textnormal{u}}\neq \emptyset$
seems to restrict the considered rank\mbox{-}1 lattice sizes unnecessarily. However, the approaches of the last sections
can easily deal with the sets $I_{\bmod M}$ and the requirement $I_{\bmod M}^{\textnormal{u}}\neq \emptyset$
allows for a successive reduction of the frequency set $I$, cf. the context of \eqref{eq:reduced_polynomial1}.

\subsubsection*{Bound the number of reconstructable Fourier coefficients from below}\label{sec:P_I_bound_below}

One way to guarantee the success in using the aforementioned strategy is to ensure that $I_{\bmod M}^{\textnormal{u}}$
contains at least a significant number of frequencies, which only depends on $M$ for a given frequency set $I$.
We search for the smallest prime number $M$ such that the number of frequencies within $I_{\bmod M}^{\textnormal{u}}$ fulfills $|I_{\bmod M}^{\textnormal{u}}|\ge\frac{|I|}{C}$, $C>1$, and $M>c(|I_{\!\bmod M}|-1)$, $c>1$, i.e., we define
\begin{align}
M^{I,C}_c:=\min\left\{M\in\N\colon M \text{ prime with }|I_{\bmod M}^{\textnormal{u}}|\ge\frac{|I|}{C}\text{ and }M>c(|I_{\!\bmod M}|-1)\right\}.
\label{eq:def_Ptilde}
\end{align}

\begin{algorithm}[tb]
\caption{Determining reconstructing multiple rank-1 lattices (Section \ref{sec:P_I_bound_below})}\label{alg:construct_mr1l_5}
  \begin{tabular}{p{1.4cm}p{5.05cm}p{7.8cm}}
    Input: 	& $I\subset\Z^d$, $|I|<\infty$ 		& frequency set\\
    		& $c\in(1,\infty)\subset\R$ 		& oversampling factor\\
    		& $C\in[1,|I|]$								& fixed constant
  \end{tabular}

  \begin{algorithmic}[1]
  \State $s=0$
  \While{$|I|>1$}
	\State determine $M^{I,C}_c$, cf. \eqref{eq:def_Ptilde}
	\State construct $I_{\bmod M^{I,C}_c}$, cf. \eqref{eq:def_ImodM}
	\State construct reconstructing multiple rank\mbox{-}1 lattice $\Lambda'=\Lambda(\zb z_{s+1},M_{s+1},\ldots,\zb z_{s+\ell'},M_{s+\ell'})$
	 \Statex\hskip\algorithmicindent for $I_{\bmod M^{I,C}_c}$ using Algorithm \ref{alg:construct_mr1l_I} with $M_{s+1}=\ldots=M_{s+\ell'}=M^{I,C}_c$
	\State $s=s+\ell'$
	\State determine $I=I\setminus\{\zb k\in I\colon \zb k\bmod M^{I,C}_c=\zb h\in I_{\bmod M^{I,C}_c}^{\textnormal{u}}\}$
  \EndWhile
	
  \end{algorithmic}
  \begin{tabular}{p{1.4cm}p{5.05cm}p{7.84cm}}
    Output: & $M_1,\ldots,M_s$ & lattice sizes of rank\mbox{-}1 lattices and\\
	    	& $\zb z_1,\ldots,\zb z_s$ & generating vectors of rank\mbox{-}1 lattices such that\\
	    	& $\Lambda(\zb z_1,M_1,\ldots,\zb z_s,M_s)$ &  is a reconstructing multiple rank\mbox{-}1 lattice
\\
    \midrule
    \multicolumn{3}{l}{Complexity: $\OO{N_I(d|I|\log |I|+\log\log N_I)}$\quad\small with high prob.~for fixed $c$ and $C$}
  \end{tabular}
\end{algorithm}

The application of one of the Algorithms \ref{alg:construct_mr1l_uniform} or
\ref{alg:construct_mr1l_I} with fixed rank\mbox{-}1 lattice sizes $M$ allows for the direct reconstruction of all Fourier coefficients
$\hat{p}_{\zb k}$, $\zb k\in I_{M}^{\textnormal{u}}$.
Taking the last considerations as well as the iterative approach described in the context of \eqref{eq:reduced_polynomial1} into account, we achieve the search strategy for reconstructing multiple rank-1 lattices
that is outlined in Algorithm~\ref{alg:construct_mr1l_5}.
Similar to the considerations in Theorem \ref{thm:decreasing_lambda}, one realizes
that the loop in Algorithm~\ref{alg:construct_mr1l_5} is passed at most
$\min\{C\log{T},T\}$ times.
Assuming that $M_c^{I,C}$ is in the magnitude of $T=|I|$,
with a probability of at least $1-\delta$ the number of sampling nodes in the output of Algorithm \ref{alg:construct_mr1l_5}
can simply be estimated by $T$ times a polylogarithmic term in $T$.

\subsection{Combining both strategies}
We can even combine the strategies described in Sections \ref{sec:decreasing_lambda} and \ref{sec:stretching_PI}. In each step of the construction in Algorithm \ref{alg:construct_mr1l_5}, we can choose the best of the determined $\ell'$ rank\mbox{-}1 lattices that is used for a reconstruction. With high probability one of the rank\mbox{-}1 lattices allows for the unique reconstruction of $\frac{|I|}{Cs}$ frequencies of $I$. Consequently, we can go straightforward in determining a reconstructing rank\mbox{-}1 lattice for a smaller frequency set of cardinality of at most $\left(1-\frac{1}{Cs}\right)|I|$ frequencies, where $s$ depends on $c$, $\delta$, $|I|$, and $|I_{\bmod M}|$. Restricting the used prime lattice sizes $M$ being distinct from those that are already used in each step, we can also determine reconstructing multiple rank\mbox{-}1 lattices
that are subsampling schemes of a huge single rank\mbox{-}1 lattice, cf. \cite[Cor. 2.2]{Kae16}.

\section{Numerical tests}\label{sec:num_test}

Since the complexity of the fast Fourier transform related to the reconstruction of the trigonometric polynomial
mainly depends on the number of used sampling nodes, we focus 
on the oversampling factor $M/|I|$, i.e., the ratio of the number of used sampling nodes $M$ compared to the number
$|I|$ of Fourier coefficients that will be reconstructed. In addition, the aforementioned complexity depends on the number $s$
of used single rank\mbox{-}1 lattices that are combined to a multiple rank-1 lattice. Consequently, we also present these values.
Furthermore, we will compute some of the condition numbers of the Fourier matrices $\zb A(\Lambda(\zb z_1,M_1,\ldots,\zb z_s,M_s),I)$, cf.~\eqref{eqn:Fourier_matrix}, that realizes the evaluation of the
considered trigonometric polynomials, even though we do not have any  theoretical statements on them.
Due to the computational costs, we computed the condition numbers only for frequency sets of low cardinality, i.e., $|I|\le20\,000$.

A first numerical test on a very specific frequency set $I$ shows the advantages of Algorithm \ref{alg:construct_mr1l_5}.
Second, we consider random frequency sets and approve in some sense
the universality of Algorithms \ref{alg:construct_mr1l_uniform} and
\ref{alg:construct_mr1l_uniform_different_primes}.
Last, we consider dyadic hyperbolic crosses as frequency sets $I$ and compare Algorithms \ref{alg:construct_mr1l_uniform} to \ref{alg:construct_mr1l_4}. We leave out Algorithm \ref{alg:construct_mr1l_5} here, since we do not expect significant advantages due to the structure of the frequency sets.

\subsection{Specific frequency set}

We fix the parameters $\delta=0.5$ and $c=1.1$ and we consider
the $d$-dimensional frequency set
$$
I:=\{\zb k_1,\ldots,\zb k_5\}=\left\{
\left(\begin{array}{c}
0\\
\zb h
\end{array}\right),
\left(\begin{array}{c}
6\,251\\
\zb h
\end{array}\right)
\left(\begin{array}{c}
10\,879\\
\zb h
\end{array}\right)
\left(\begin{array}{c}
15\,457\\
\zb h
\end{array}\right)
\left(\begin{array}{c}
19\,499\\
\zb h
\end{array}\right)
\right\},
$$
where $\zb h\in \Z^{d-1} $  is fixed.
Algorithms \ref{alg:construct_mr1l_uniform} and \ref{alg:construct_mr1l_uniform_different_primes}
will determine a multiple rank\mbox{-}1 lattice that contains at least one rank\mbox{-}1 lattice of size  $M_1=19\,501$
since this is the smallest prime number larger than $N_I=19\,499$.
The smallest prime number within $P^I$ is  $53$. Accordingly, the outputs of Algorithms \ref{alg:construct_mr1l_I} to
\ref{alg:construct_mr1l_4} will contain at least one rank\mbox{-}1 lattice of size $M_1=53$.
Applying Algorithm~\ref{alg:construct_mr1l_5} with $C=2$ determined $M_1=5$ and $M_2=3$
in each of  our $10\,000$ numerical tests, i.e., the determined reconstructing multiple rank\mbox{-}1 lattices 
consist of only $M=1-2+5+3=7$ sampling nodes. The observed condition numbers 
$\cond(\zb A)=\sqrt{\frac{19+\sqrt{21}}{11-\sqrt{61}}}\approx 2.7191$
of the Fourier matrices $\zb A$ were
fixed.

\subsection{Random frequency sets}

\begin{table}[tb]
\centering
\begin{tabular}{crrr|rr|r}
\toprule
&$T$&$s$&$1-\delta$& $M$& $M/T$ & {\#}reco\\
\midrule
\multirow{5}{*}{\begin{sideways} Algorithm \ref{alg:construct_mr1l_uniform} \end{sideways}}
& 148 & 10 & 2.784e-03 & 3\,061 & 2.07e+01 & 9\,904 \\
& 1\,808 & 15 & 2.346e-05 & 54\,241 & 3.00e+01 & 9\,986 \\
& 22\,026 & 20 & 2.115e-05 & 881\,041 & 4.00e+01 & 9\,999\\
& 268\,337 & 25 & 1.068e-06 & 13\,416\,901 & 5.00e+01 & 10\,000 \\
& 3\,269\,017 & 30 & 1.139e-07 & 196\,141\,261 & 6.00e+01 & 10\,000 \\
\midrule
\multirow{5}{*}{\begin{sideways} Algorithm \ref{alg:construct_mr1l_uniform_different_primes} \end{sideways}}
& 148 & 10 & 2.784e-03 & 3\,315 & 2.24e+01 & 9\,951 \\
& 1\,808 & 15 & 2.346e-05 & 55\,061 & 3.05e+01 & 9\,988 \\
& 22\,026 & 20 & 2.115e-05 & 882\,833 & 4.01e+01 & 9\,999 \\
& 268\,337 & 25 & 1.068e-06 & 13\,420\,041 & 5.00e+01 & 10\,000 \\
& 3\,269\,017 & 30 & 1.139e-07 & 196\,148\,197 & 6.00e+01 & 10\,000 \\
\bottomrule
\end{tabular}
\caption{Applying Algorithms \ref{alg:construct_mr1l_uniform} and \ref{alg:construct_mr1l_uniform_different_primes} for fixed $T$, $s$, and $c=2$, yielded $M=1-s+\sum_{\ell=1}^sM_{\ell}$ and $M/T$.
The resulting multiple rank\mbox{-}1 lattices have been tested for their reconstruction property against 10\,000 randomly chosen frequency sets $I\subset[1,300]^3$ of cardinality $T$, where each $\zb k\in I$ was  randomly and uniformly chosen in $[1,300]^3\cap\Z^3$.
Column {\#}reco shows (a lower bound on) the number of frequency sets $I$, for which each $p\in \Pi_I$ was successfully reconstructable
using the sampling values from the determined multiple rank\mbox{-}1 lattice.
}\label{tab:MR1L_uniform_test}
\end{table}

Since Algorithms~\ref{alg:construct_mr1l_uniform} and \ref{alg:construct_mr1l_uniform_different_primes}
only depend on the cardinality $T$ and the expansion $N_I$ of the considered frequency set $I$,
we performed the following numerical test.
We fix $c=2$ and $s=10,15,20,25,30$, and we compute $T=\left\lfloor\e^{\frac{s}{2}}\right\rfloor$, i.e.,
depending on $c=2$ and $s$ we choose $T$ as large as possible such that the estimate $1-\delta$ of the success probability in Theorem \ref{thm:prob_bound_T} is greater than zero.
Accordingly, the statement of Theorem~\ref{thm:prob_bound_T} ensures the construction of reconstructing multiple rank\mbox{-}1 lattices
only with a low probability.

For each $s\in\{10,15,20,25,30\}$ we constructed exactly one reconstructing multiple rank\mbox{-}1 lattice 
for a frequency set  $I$  with $N_I=299$ and 
$T_s=\left\lfloor\e^{\frac{s}{2}}\right\rfloor$.
Subsequently, we determined $10\,000$ randomly chosen frequency sets $I\subset[1,300]^3$
with $|I|= T_s$, where each frequency $\zb k\in I$ was chosen uniformly at random from $[1,300]^3$.
For each of the frequency sets $I$ we checked the equality in \eqref{def:union_Ir},
which guarantees a full column rank Fourier matrix and, thus, a unique reconstruction of
all trigonometric polynomials in $\Pi_I$ by the means of the sampling values at the multiple rank\mbox{-}1
lattice. In Table \ref{tab:MR1L_uniform_test}, we collect the results and present
the number of frequency sets $I$ that fulfilled the equality in \eqref{def:union_Ir}.
Due to the fact that we do not check the column rank  of the Fourier matrices, 
the numbers in the column {\#}reco in Table \ref{tab:MR1L_uniform_test} are only lower bounds on the
number of frequency sets $I$, where the Fourier matrix $\zb A(\Lambda(\zb z_1,M_1,\ldots,\zb z_s,M_s),I)$
has full column rank.

We observe that the determined multiple rank\mbox{-}1 lattices are reconstructing multiple rank\mbox{-}1 lattices
for at least 99\% of the tested frequency sets $I$ in our numerical tests.
We interpret this observation as a consequence of the rough estimates in our
proofs. In detail, the multiple application of union bounds affects
strongly the estimate on the failure probability $\delta$, thus  
we expect that the considered approaches behave much better
in practice.

\subsection{Dyadic hyperbolic crosses}\label{sec:num_test:hc}

\begin{figure}
\centering
\begin{tikzpicture}[baseline=(current axis.south)]
\begin{semilogxaxis}[
    clip=false,
    scale only axis,
    xmin=1,xmax=2664192, ymin=0, ymax=65,
    title={Oversampling factors},
    xtick={7,34,501,5336,47264,370688,2664192},
    x tick label style={align=center,font=\scriptsize},
    y tick label style={font=\scriptsize},
    xticklabels={{7\\[0.3em]1},{34\\[0.3em]2},{501\\[0.3em]4},{5\,336\\[0.3em]6},{47\,264\\[0.3em]8}, {370\,688\\[0.3em]10},{2\,664\,192\\[0.3em]12}},
    width=0.55\textwidth,
    height=0.35\textwidth,
    legend entries={Algorithm \ref{alg:construct_mr1l_uniform}, Algorithm \ref{alg:construct_mr1l_uniform_different_primes},Algorithm \ref{alg:construct_mr1l_I}, Algorithm \ref{alg:construct_mr1l_I_distinct_primes}, Algorithm \ref{alg:construct_mr1l_3}, Algorithm \ref{alg:construct_mr1l_4}
    },
    transpose legend,
    legend to name = leghypcross6d,
    legend columns=2,
    legend style={font=\footnotesize, /tikz/every even column/.append style={column sep=0.5cm}},
    cycle list name=MR1LOF
    ]
  \addplot coordinates {
(7, 1.043e+001) (34, 1.750e+001) (138, 2.401e+001) (501, 2.817e+001) (1683, 3.404e+001) (5336, 3.805e+001) (16172, 4.201e+001) (47264, 4.600e+001) (134048, 5.000e+001) (370688, 5.600e+001) (1003136, 6.000e+001) (2664192, 6.200e+001) 
};
  \addplot coordinates {
(7, 1.814e+001) (34, 2.221e+001) (138, 2.707e+001) (501, 2.919e+001) (1683, 3.479e+001) (5336, 3.836e+001) (16172, 4.211e+001) (47264, 4.605e+001) (134048, 5.004e+001) (370688, 5.601e+001) (1003136, 6.001e+001) (2664192, 6.200e+001) 
};
  \addplot+[dashed] coordinates {
(7, 3.571e+000) (34, 9.735e+000) (138, 1.001e+001) (501, 1.409e+001) (1683, 2.203e+001) (5336, 1.602e+001) (16172, 1.800e+001) (47264, 2.400e+001) (134048, 2.600e+001) (370688, 3.800e+001) (1003136, 3.000e+001) (2664192, 3.200e+001) 
};
  \addplot+[dashed] coordinates {
(7, 1.857e+000) (34, 1.085e+001) (138, 1.266e+001) (501, 1.222e+001) (1683, 1.617e+001) (5336, 2.212e+001) (16172, 2.002e+001) (47264, 2.801e+001) (134048, 2.601e+001) (370688, 2.600e+001) (1003136, 3.400e+001) (2664192, 3.800e+001) 
};
  \addplot+[dashed] coordinates {
(7, 1.857e+000) (34, 2.147e+000) (138, 2.268e+000) (501, 2.501e+000) (1683, 2.695e+000) (5336, 2.795e+000) (16172, 2.840e+000) (47264, 2.988e+000) (134048, 2.988e+000) (370688, 3.049e+000) (1003136, 3.049e+000) (2664192, 3.089e+000) 
};
  \addplot+[dashed] coordinates {
(7, 1.857e+000) (34, 2.029e+000) (138, 2.355e+000) (501, 2.637e+000) (1683, 2.725e+000) (5336, 2.862e+000) (16172, 2.800e+000) (47264, 2.995e+000) (134048, 3.000e+000) (370688, 3.014e+000) (1003136, 3.067e+000) (2664192, 3.082e+000) 
};
\node  at (axis description cs:0,0) [text width=31pt, anchor=north east, align=center,xshift=14pt,yshift=1.5pt] {\scriptsize $|H_{n}^6|$};
\node  at (axis description cs:0,0) [text width=31pt, anchor=north east, align=center,xshift=14pt,yshift=-11.5pt] {\scriptsize $n$};
\end{semilogxaxis}
\end{tikzpicture}
\begin{tikzpicture}[baseline=(current axis.south)]
\begin{loglogaxis}[
    clip=false,
    scale only axis,
    xmin=1,xmax=16172, ymin=0.9, ymax=10,
    title={Condition numbers},
    xtick={7,34,501,5336},
    ytick={1,2,5,7,9},
    x tick label style={align=center,font=\scriptsize},
    y tick label style={font=\scriptsize},
    xticklabels={{7\\[0.3em]1},{34\\[0.3em]2},{501\\[0.3em]4},{5\,336\\[0.3em]6}},
    yticklabels={1,2,5,7,9},
    width=0.25\textwidth,
    height=0.35\textwidth,
    cycle list name=MR1LOF
    ]
  \addplot coordinates {
(7, 1.947e+000) (34, 1.633e+000) (138, 1.525e+000) (501, 1.499e+000) (1683, 1.440e+000) (5336, 1.392e+000) (16172, 1.394e+000) 
};
  \addplot coordinates {
(7, 1.215e+000) (34, 1.341e+000) (138, 1.476e+000) (501, 1.453e+000) (1683, 1.415e+000) (5336, 1.423e+000) (16172, 1.402e+000) 
};
  \addplot+[dashed] coordinates {
(7, 2.172e+000) (34, 2.074e+000) (138, 2.037e+000) (501, 2.586e+000) (1683, 1.843e+000) (5336, 1.754e+000) (16172, 1.716e+000) 
};
  \addplot+[dashed] coordinates {
(7, 1.000e+000) (34, 1.883e+000) (138, 2.122e+000) (501, 1.857e+000) (1683, 1.712e+000) (5336, 1.608e+000) (16172, 1.763e+000) 
};
  \addplot+[dashed] coordinates {
(7, 1.000e+000) (34, 5.996e+000) (138, 7.933e+000) (501, 7.768e+000) (1683, 7.880e+000) (5336, 8.113e+000) (16172, 8.115e+000) 
};
  \addplot+[dashed] coordinates {
(7, 1.000e+000) (34, 8.201e+000) (138, 7.771e+000) (501, 8.844e+000) (1683, 8.958e+000) (5336, 7.857e+000) (16172, 8.502e+000) 
};
\node  at (axis description cs:0,0) [text width=31pt, anchor=north east, align=center,xshift=14pt,yshift=1.5pt] {\scriptsize $|H_{n}^6|$};
\node  at (axis description cs:0,0) [text width=31pt, anchor=north east, align=center,xshift=14pt,yshift=-11.5pt] {\scriptsize $n$};
\end{loglogaxis}
\end{tikzpicture}
\ref{leghypcross6d}
\caption{Oversampling factors of reconstructing multiple rank\mbox{-}1 lattices and the condition numbers of corresponding Fourier matrices for
hyperbolic cross frequency sets $H_n^6$.}
\label{fig:numtests_hypcross_d6}
\end{figure}
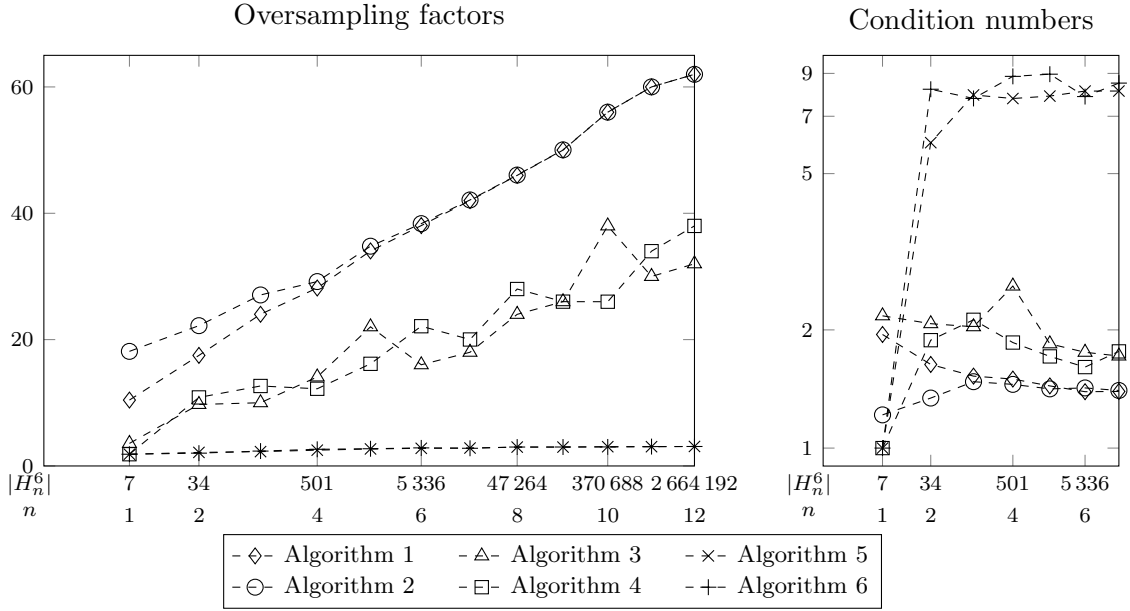

\begin{figure}[tb]
\centering
\begin{tikzpicture}[baseline=(current axis.south)]
  \begin{semilogxaxis}[
     clip=false,
     scale only axis,
     xmin=5,xmax=1900, ymin=0, ymax=75,
     xtick={13,34,89,229,593,1538},
     x tick label style={align=center, font=\scriptsize},
     y tick label style={font=\scriptsize},
     xticklabels={{13\\[0.3em]3},{34\\[0.3em]6},{89\\[0.3em]11},{229\\[0.3em]19},{593\\[0.3em]32},{1\,538\\[0.3em]53}},
    xlabel style={align=center, text width=7cm, font=\footnotesize},
    every axis legend/.append style={nodes={right}},
    legend entries={Algorithm \ref{alg:construct_mr1l_uniform}, Algorithm \ref{alg:construct_mr1l_uniform_different_primes},Algorithm \ref{alg:construct_mr1l_I}, Algorithm \ref{alg:construct_mr1l_I_distinct_primes}, Algorithm \ref{alg:construct_mr1l_3}, Algorithm \ref{alg:construct_mr1l_4}
    },
    transpose legend,
    legend to name = leghypcrossnfix,
    legend columns=2,
    legend style={font=\footnotesize, /tikz/every even column/.append style={column sep=0.5cm}},
    title style={font=\footnotesize},
    title={$n=2$},
    width=0.4\textwidth,
    cycle list name=MR1LOF,
    ]
  \addplot coordinates {
(8, 1.212e+01) (13, 1.515e+01) (19, 1.521e+01) (26, 1.604e+01) (34, 1.750e+01) (43, 1.844e+01) (53, 2.002e+01) (64, 1.970e+01) (76, 2.172e+01) (89, 2.201e+01) (103, 2.244e+01) (118, 2.219e+01) (134, 2.401e+01) (151, 2.432e+01) (169, 2.386e+01) (188, 2.413e+01) (208, 2.613e+01) (229, 2.589e+01) (251, 2.600e+01) (274, 2.591e+01) (298, 2.609e+01) (323, 2.600e+01) (349, 2.808e+01) (376, 2.793e+01) (404, 2.800e+01) (433, 2.833e+01) (463, 2.806e+01) (494, 2.806e+01) (526, 2.795e+01) (559, 2.995e+01) (593, 3.000e+01) (628, 3.005e+01) (664, 2.996e+01) (701, 3.013e+01) (739, 3.004e+01) (778, 3.004e+01) (818, 3.000e+01) (859, 3.004e+01) (901, 2.997e+01) (944, 3.200e+01) (988, 3.203e+01) (1033, 3.203e+01) (1079, 3.203e+01) (1126, 3.197e+01) (1174, 3.197e+01) (1223, 3.200e+01) (1273, 3.203e+01) (1324, 3.198e+01) (1376, 3.200e+01) (1429, 3.198e+01) (1483, 3.202e+01) (1538, 3.402e+01) (1594, 3.398e+01) (1651, 3.398e+01) (1709, 3.414e+01) 
  };
  \addplot coordinates {
(8, 1.888e+01) (13, 2.115e+01) (19, 2.111e+01) (26, 2.073e+01) (34, 2.221e+01) (43, 2.253e+01) (53, 2.474e+01) (64, 2.320e+01) (76, 2.554e+01) (89, 2.531e+01) (103, 2.556e+01) (118, 2.470e+01) (134, 2.671e+01) (151, 2.684e+01) (169, 2.617e+01) (188, 2.624e+01) (208, 2.806e+01) (229, 2.790e+01) (251, 2.848e+01) (274, 2.770e+01) (298, 2.745e+01) (323, 2.762e+01) (349, 2.995e+01) (376, 2.963e+01) (404, 2.925e+01) (433, 2.979e+01) (463, 2.942e+01) (494, 2.922e+01) (526, 2.915e+01) (559, 3.155e+01) (593, 3.129e+01) (628, 3.116e+01) (664, 3.178e+01) (701, 3.105e+01) (739, 3.089e+01) (778, 3.089e+01) (818, 3.117e+01) (859, 3.093e+01) (901, 3.101e+01) (944, 3.316e+01) (988, 3.281e+01) (1033, 3.281e+01) (1079, 3.319e+01) (1126, 3.276e+01) (1174, 3.263e+01) (1223, 3.300e+01) (1273, 3.288e+01) (1324, 3.247e+01) (1376, 3.273e+01) (1429, 3.273e+01) (1483, 3.277e+01) (1538, 3.487e+01) (1594, 3.467e+01) (1651, 3.450e+01) (1709, 3.477e+01) 
 };
  \addplot coordinates {
(8, 4.125e+00) (13, 2.231e+00) (19, 5.737e+00) (26, 6.038e+00) (34, 5.853e+00) (43, 8.209e+00) (53, 8.019e+00) (64, 9.859e+00) (76, 9.882e+00) (89, 1.001e+01) (103, 1.020e+01) (118, 1.413e+01) (134, 1.201e+01) (151, 1.014e+01) (169, 9.947e+00) (188, 1.207e+01) (208, 1.407e+01) (229, 9.961e+00) (251, 1.200e+01) (274, 1.395e+01) (298, 1.405e+01) (323, 1.400e+01) (349, 1.404e+01) (376, 1.397e+01) (404, 1.600e+01) (433, 1.214e+01) (463, 1.403e+01) (494, 1.203e+01) (526, 1.398e+01) (559, 1.797e+01) (593, 1.400e+01) (628, 1.402e+01) (664, 1.797e+01) (701, 1.607e+01) (739, 1.803e+01) (778, 1.602e+01) (818, 1.800e+01) (859, 1.402e+01) (901, 1.598e+01) (944, 1.600e+01) (988, 1.602e+01) (1033, 1.401e+01) (1079, 1.802e+01) (1126, 1.399e+01) (1174, 1.599e+01) (1223, 1.600e+01) (1273, 1.601e+01) (1324, 1.399e+01) (1376, 1.800e+01) (1429, 1.199e+01) (1483, 1.801e+01) (1538, 1.801e+01) (1594, 1.599e+01) (1651, 1.599e+01) (1709, 1.607e+01) 
  };
  \addplot coordinates {
(8, 4.375e+00) (13, 4.538e+00) (19, 4.053e+00) (26, 9.115e+00) (34, 6.147e+00) (43, 1.147e+01) (53, 6.170e+00) (64, 8.297e+00) (76, 1.062e+01) (89, 1.053e+01) (103, 1.086e+01) (118, 1.057e+01) (134, 1.246e+01) (151, 1.495e+01) (169, 1.030e+01) (188, 1.474e+01) (208, 1.456e+01) (229, 1.227e+01) (251, 1.696e+01) (274, 1.233e+01) (298, 1.437e+01) (323, 1.439e+01) (349, 1.666e+01) (376, 1.222e+01) (404, 1.424e+01) (433, 1.233e+01) (463, 1.224e+01) (494, 1.224e+01) (526, 1.845e+01) (559, 1.222e+01) (593, 1.219e+01) (628, 1.428e+01) (664, 1.447e+01) (701, 1.634e+01) (739, 1.618e+01) (778, 1.215e+01) (818, 1.635e+01) (859, 1.624e+01) (901, 1.840e+01) (944, 1.421e+01) (988, 1.413e+01) (1033, 1.825e+01) (1079, 1.844e+01) (1126, 1.617e+01) (1174, 1.614e+01) (1223, 1.625e+01) (1273, 1.827e+01) (1324, 1.612e+01) (1376, 2.029e+01) (1429, 1.617e+01) (1483, 1.620e+01) (1538, 1.619e+01) (1594, 1.612e+01) (1651, 2.015e+01) (1709, 1.825e+01) 
  };
  \addplot coordinates {
(8, 2.125e+00) (13, 2.231e+00) (19, 1.947e+00) (26, 2.115e+00) (34, 2.265e+00) (43, 2.442e+00) (53, 2.396e+00) (64, 2.359e+00) (76, 2.461e+00) (89, 2.371e+00) (103, 2.456e+00) (118, 2.432e+00) (134, 2.515e+00) (151, 2.722e+00) (169, 2.704e+00) (188, 2.612e+00) (208, 2.774e+00) (229, 2.694e+00) (251, 2.729e+00) (274, 2.719e+00) (298, 2.883e+00) (323, 2.820e+00) (349, 2.805e+00) (376, 2.742e+00) (404, 2.780e+00) (433, 2.871e+00) (463, 2.922e+00) (494, 2.877e+00) (526, 2.880e+00) (559, 2.828e+00) (593, 2.895e+00) (628, 2.842e+00) (664, 2.983e+00) (701, 2.909e+00) (739, 2.946e+00) (778, 2.955e+00) (818, 3.004e+00) (859, 2.972e+00) (901, 3.000e+00) (944, 2.927e+00) (988, 2.997e+00) (1033, 2.990e+00) (1079, 2.956e+00) (1126, 3.026e+00) (1174, 2.986e+00) (1223, 3.013e+00) (1273, 3.002e+00) (1324, 2.961e+00) (1376, 2.982e+00) (1429, 3.015e+00) (1483, 2.978e+00) (1538, 3.054e+00) (1594, 3.014e+00) (1651, 3.041e+00) (1709, 3.090e+00) 
  };
  \addplot coordinates {
(8, 2.125e+00) (13, 2.231e+00) (19, 1.947e+00) (26, 2.192e+00) (34, 2.029e+00) (43, 2.302e+00) (53, 2.358e+00) (64, 2.297e+00) (76, 2.487e+00) (89, 2.551e+00) (103, 2.612e+00) (118, 2.415e+00) (134, 2.515e+00) (151, 2.682e+00) (169, 2.645e+00) (188, 2.495e+00) (208, 2.793e+00) (229, 2.738e+00) (251, 2.761e+00) (274, 2.741e+00) (298, 2.822e+00) (323, 2.771e+00) (349, 2.777e+00) (376, 2.779e+00) (404, 2.879e+00) (433, 2.926e+00) (463, 2.901e+00) (494, 2.872e+00) (526, 2.800e+00) (559, 2.860e+00) (593, 2.922e+00) (628, 2.938e+00) (664, 2.944e+00) (701, 2.889e+00) (739, 2.978e+00) (778, 2.927e+00) (818, 3.001e+00) (859, 2.986e+00) (901, 2.951e+00) (944, 2.942e+00) (988, 3.009e+00) (1033, 3.039e+00) (1079, 2.937e+00) (1126, 2.997e+00) (1174, 2.984e+00) (1223, 2.977e+00) (1273, 3.013e+00) (1324, 2.998e+00) (1376, 2.991e+00) (1429, 2.969e+00) (1483, 2.988e+00) (1538, 3.051e+00) (1594, 3.055e+00) (1651, 2.993e+00) (1709, 3.056e+00) 
  };
\node  at (axis description cs:0,0) [
    text width=31pt, anchor=north east, align=center,xshift=14pt,yshift=1.5pt
    ]
    {\scriptsize $|H_{2}^d|$};
\node  at (axis description cs:0,0) [
    text width=31pt, anchor=north east, align=center,xshift=14pt,yshift=-11.5pt] {\scriptsize $d$};
 \end{semilogxaxis}
\end{tikzpicture}
\;
\begin{tikzpicture}[baseline=(current axis.south)]
  \begin{semilogxaxis}[
    clip=false,
    scale only axis,
    xmin=13,xmax=42000, ymin=0, ymax=75,
    xtick={38,138,518,1958,7665,30688},
    x tick label style={align=center, font=\scriptsize},
    y tick label style={font=\scriptsize},
    xticklabels={{38\\[0.3em]3},{138\\[0.3em]6},{518\\[0.3em]11},{1\,958\\[0.3em]19},{7\,665\\[0.3em]32},{30\,688\\[0.3em]53}},
    xlabel style={align=center, font=\footnotesize},
    title style={font=\footnotesize},
    title={$n=3$},
    width=0.4\textwidth,
    cycle list name=MR1LOF
    ]
  \addplot coordinates {
(20, 1.605e+001) (38, 1.850e+001) (63, 2.002e+001) (96, 2.178e+001) (138, 2.401e+001) (190, 2.388e+001) (253, 2.611e+001) (328, 2.608e+001) (416, 2.820e+001) (518, 2.806e+001) (635, 3.014e+001) (768, 3.012e+001) (918, 3.218e+001) (1086, 3.209e+001) (1273, 3.203e+001) (1480, 3.202e+001) (1708, 3.416e+001) (1958, 3.400e+001) (2231, 3.400e+001) (2528, 3.601e+001) (2850, 3.600e+001) (3198, 3.600e+001) (3573, 3.602e+001) (3976, 3.599e+001) (4408, 3.801e+001) (4870, 3.799e+001) (5363, 3.801e+001) (5888, 3.800e+001) (6446, 3.800e+001) (7038, 4.001e+001) (7665, 3.999e+001) (8328, 4.000e+001) (9028, 4.000e+001) (9766, 4.000e+001) (10543, 4.000e+001) (11360, 4.200e+001) (12218, 4.200e+001) (13118, 4.200e+001) (14061, 4.200e+001) (15048, 4.200e+001) (16080, 4.200e+001) (17158, 4.200e+001) (18283, 4.400e+001) (19456, 4.400e+001) (20678, 4.400e+001) (21950, 4.401e+001) (23273, 4.400e+001) (24648, 4.400e+001) (26076, 4.400e+001) (27558, 4.400e+001) (29095, 4.400e+001) (30688, 4.600e+001) (32338, 4.600e+001) (34046, 4.600e+001) (35813, 4.600e+001) 
  };
  \addplot coordinates {
(20, 2.175e+001) (38, 2.297e+001) (63, 2.357e+001) (96, 2.472e+001) (138, 2.707e+001) (190, 2.596e+001) (253, 2.864e+001) (328, 2.775e+001) (416, 2.978e+001) (518, 2.923e+001) (635, 3.099e+001) (768, 3.093e+001) (918, 3.312e+001) (1086, 3.310e+001) (1273, 3.288e+001) (1480, 3.273e+001) (1708, 3.479e+001) (1958, 3.452e+001) (2231, 3.455e+001) (2528, 3.658e+001) (2850, 3.650e+001) (3198, 3.659e+001) (3573, 3.641e+001) (3976, 3.646e+001) (4408, 3.832e+001) (4870, 3.827e+001) (5363, 3.834e+001) (5888, 3.820e+001) (6446, 3.819e+001) (7038, 4.033e+001) (7665, 4.021e+001) (8328, 4.025e+001) (9028, 4.020e+001) (9766, 4.020e+001) (10543, 4.017e+001) (11360, 4.218e+001) (12218, 4.221e+001) (13118, 4.215e+001) (14061, 4.221e+001) (15048, 4.212e+001) (16080, 4.213e+001) (17158, 4.213e+001) (18283, 4.413e+001) (19456, 4.413e+001) (20678, 4.413e+001) (21950, 4.411e+001) (23273, 4.410e+001) (24648, 4.410e+001) (26076, 4.409e+001) (27558, 4.410e+001) (29095, 4.407e+001) (30688, 4.609e+001) (32338, 4.611e+001) (34046, 4.610e+001) (35813, 4.609e+001) 
 };
  \addplot coordinates {
(20, 6.050e+000) (38, 6.184e+000) (63, 6.016e+000) (96, 7.927e+000) (138, 1.201e+001) (190, 9.953e+000) (253, 1.004e+001) (328, 1.204e+001) (416, 1.410e+001) (518, 1.603e+001) (635, 1.206e+001) (768, 1.205e+001) (918, 1.810e+001) (1086, 1.605e+001) (1273, 1.601e+001) (1480, 1.401e+001) (1708, 1.407e+001) (1958, 1.600e+001) (2231, 1.600e+001) (2528, 1.801e+001) (2850, 1.600e+001) (3198, 1.800e+001) (3573, 1.801e+001) (3976, 1.800e+001) (4408, 2.000e+001) (4870, 1.800e+001) (5363, 2.000e+001) (5888, 1.800e+001) (6446, 1.800e+001) (7038, 2.001e+001) (7665, 2.000e+001) (8328, 2.000e+001) (9028, 2.200e+001) (9766, 2.000e+001) (10543, 2.200e+001) (11360, 2.400e+001) (12218, 2.400e+001) (13118, 2.000e+001) (14061, 2.200e+001) (15048, 2.200e+001) (16080, 2.200e+001) (17158, 1.800e+001) (18283, 2.400e+001) (19456, 2.200e+001) (20678, 2.600e+001) (21950, 2.201e+001) (23273, 2.200e+001) (24648, 2.400e+001) (26076, 2.400e+001) (27558, 2.200e+001) (29095, 2.200e+001) (30688, 2.200e+001) (32338, 2.200e+001) (34046, 2.400e+001) (35813, 2.600e+001) 
  };
  \addplot coordinates {
(20, 9.050e+000) (38, 9.079e+000) (63, 6.238e+000) (96, 6.031e+000) (138, 1.266e+001) (190, 1.024e+001) (253, 1.262e+001) (328, 1.445e+001) (416, 1.448e+001) (518, 1.852e+001) (635, 1.624e+001) (768, 1.216e+001) (918, 1.629e+001) (1086, 1.427e+001) (1273, 1.827e+001) (1480, 1.414e+001) (1708, 1.826e+001) (1958, 1.811e+001) (2231, 1.612e+001) (2528, 1.611e+001) (2850, 1.610e+001) (3198, 2.019e+001) (3573, 2.013e+001) (3976, 2.016e+001) (4408, 2.007e+001) (4870, 1.805e+001) (5363, 2.009e+001) (5888, 2.004e+001) (6446, 1.603e+001) (7038, 2.411e+001) (7665, 2.206e+001) (8328, 2.005e+001) (9028, 2.005e+001) (9766, 1.803e+001) (10543, 2.004e+001) (11360, 2.003e+001) (12218, 2.205e+001) (13118, 2.003e+001) (14061, 2.005e+001) (15048, 2.002e+001) (16080, 2.003e+001) (17158, 2.203e+001) (18283, 2.203e+001) (19456, 2.203e+001) (20678, 2.404e+001) (21950, 2.203e+001) (23273, 2.202e+001) (24648, 2.203e+001) (26076, 2.402e+001) (27558, 2.002e+001) (29095, 2.001e+001) (30688, 2.403e+001) (32338, 2.403e+001) (34046, 2.403e+001) (35813, 2.202e+001) 
  };
  \addplot coordinates {
(20, 2.050e+000) (38, 2.237e+000) (63, 2.175e+000) (96, 2.177e+000) (138, 2.529e+000) (190, 2.574e+000) (253, 2.502e+000) (328, 2.576e+000) (416, 2.632e+000) (518, 2.693e+000) (635, 2.770e+000) (768, 2.809e+000) (918, 2.794e+000) (1086, 2.872e+000) (1273, 2.907e+000) (1480, 2.898e+000) (1708, 3.003e+000) (1958, 3.039e+000) (2231, 3.003e+000) (2528, 2.960e+000) (2850, 2.976e+000) (3198, 3.090e+000) (3573, 2.990e+000) (3976, 3.055e+000) (4408, 3.042e+000) (4870, 3.022e+000) (5363, 3.043e+000) (5888, 3.074e+000) (6446, 3.097e+000) (7038, 3.083e+000) (7665, 3.134e+000) (8328, 3.113e+000) (9028, 3.146e+000) (9766, 3.115e+000) (10543, 3.152e+000) (11360, 3.144e+000) (12218, 3.144e+000) (13118, 3.137e+000) (14061, 3.137e+000) (15048, 3.155e+000) (16080, 3.160e+000) (17158, 3.153e+000) (18283, 3.159e+000) (19456, 3.179e+000) (20678, 3.182e+000) (21950, 3.197e+000) (23273, 3.175e+000) (24648, 3.190e+000) (26076, 3.194e+000) (27558, 3.203e+000) (29095, 3.183e+000) (30688, 3.205e+000) (32338, 3.185e+000) (34046, 3.190e+000) (35813, 3.206e+000) 
  };
  \addplot coordinates {
(20, 2.050e+000) (38, 2.132e+000) (63, 2.175e+000) (96, 2.281e+000) (138, 2.283e+000) (190, 2.426e+000) (253, 2.573e+000) (328, 2.619e+000) (416, 2.675e+000) (518, 2.654e+000) (635, 2.874e+000) (768, 2.759e+000) (918, 2.920e+000) (1086, 2.844e+000) (1273, 2.892e+000) (1480, 2.893e+000) (1708, 2.978e+000) (1958, 3.001e+000) (2231, 2.987e+000) (2528, 2.983e+000) (2850, 3.007e+000) (3198, 3.082e+000) (3573, 3.035e+000) (3976, 3.039e+000) (4408, 3.080e+000) (4870, 3.030e+000) (5363, 3.080e+000) (5888, 3.072e+000) (6446, 3.079e+000) (7038, 3.078e+000) (7665, 3.154e+000) (8328, 3.122e+000) (9028, 3.099e+000) (9766, 3.136e+000) (10543, 3.138e+000) (11360, 3.146e+000) (12218, 3.156e+000) (13118, 3.163e+000) (14061, 3.163e+000) (15048, 3.156e+000) (16080, 3.154e+000) (17158, 3.179e+000) (18283, 3.186e+000) (19456, 3.186e+000) (20678, 3.196e+000) (21950, 3.163e+000) (23273, 3.193e+000) (24648, 3.187e+000) (26076, 3.179e+000) (27558, 3.194e+000) (29095, 3.205e+000) (30688, 3.192e+000) (32338, 3.209e+000) (34046, 3.183e+000) (35813, 3.194e+000) 
  };
\node  at (axis description cs:0,0) [
    text width=31pt, anchor=north east, align=center,xshift=14pt,yshift=1.5pt] {\scriptsize $|H_{3}^d|$};
\node  at (axis description cs:0,0) [
    text width=31pt, anchor=north east, align=center,xshift=14pt,yshift=-11.5pt] {\scriptsize $d$};
 \end{semilogxaxis}
\end{tikzpicture}
\\[1em]
\begin{tikzpicture}[baseline=(current axis.south)]

  \begin{semilogxaxis}[
    clip=false,
    scale only axis,
    xmin=36,xmax=760000, ymin=0, ymax=75,
    xtick={104,501,2586,13852,79593,481029},
    x tick label style={align=center, font=\scriptsize},
    y tick label style={font=\scriptsize},
    xticklabels={{104\\[0.3em]3},{501\\[0.3em]6},{2\,586\\[0.3em]11},{13\,852\\[0.3em]19},{79\,593\\[0.3em]32},{481\,029\\[0.3em]53}},
    xlabel style={align=center, text width=7cm,
    font=\footnotesize},
    title style={font=\footnotesize},
    title={{$n=4$}},
    width=0.4\textwidth,
    cycle list name=MR1LOF
    ]
  \addplot coordinates {
(48, 2.002e+001) (104, 2.222e+001) (192, 2.388e+001) (321, 2.592e+001) (501, 2.817e+001) (743, 3.000e+001) (1059, 3.215e+001) (1462, 3.202e+001) (1966, 3.398e+001) (2586, 3.599e+001) (3338, 3.601e+001) (4239, 3.810e+001) (5307, 3.799e+001) (6561, 3.799e+001) (8021, 4.004e+001) (9708, 4.000e+001) (11644, 4.200e+001) (13852, 4.204e+001) (16356, 4.200e+001) (19181, 4.401e+001) (22353, 4.400e+001) (25899, 4.400e+001) (29847, 4.400e+001) (34226, 4.601e+001) (39066, 4.600e+001) (44398, 4.600e+001) (50254, 4.800e+001) (56667, 4.800e+001) (63671, 4.800e+001) (71301, 4.800e+001) (79593, 4.800e+001) (88584, 5.000e+001) (98312, 5.000e+001) (108816, 5.000e+001) (120136, 5.000e+001) (132313, 5.000e+001) (145389, 5.200e+001) (159407, 5.200e+001) (174411, 5.200e+001) (190446, 5.200e+001) (207558, 5.200e+001) (225794, 5.400e+001) (245202, 5.400e+001) (265831, 5.400e+001) (287731, 5.400e+001) (310953, 5.400e+001) (335549, 5.400e+001) (361572, 5.400e+001) (389076, 5.600e+001) (418116, 5.600e+001) (448748, 5.600e+001) (481029, 5.600e+001) (515017, 5.600e+001) (550771, 5.600e+001) (588351, 5.600e+001) 
  };
  \addplot coordinates {
(48, 2.406e+001) (104, 2.532e+001) (192, 2.606e+001) (321, 2.724e+001) (501, 2.919e+001) (743, 3.099e+001) (1059, 3.302e+001) (1462, 3.277e+001) (1966, 3.467e+001) (2586, 3.655e+001) (3338, 3.639e+001) (4239, 3.846e+001) (5307, 3.828e+001) (6561, 3.823e+001) (8021, 4.020e+001) (9708, 4.011e+001) (11644, 4.219e+001) (13852, 4.213e+001) (16356, 4.214e+001) (19181, 4.418e+001) (22353, 4.412e+001) (25899, 4.408e+001) (29847, 4.409e+001) (34226, 4.608e+001) (39066, 4.606e+001) (44398, 4.605e+001) (50254, 4.806e+001) (56667, 4.806e+001) (63671, 4.806e+001) (71301, 4.805e+001) (79593, 4.804e+001) (88584, 5.004e+001) (98312, 5.004e+001) (108816, 5.004e+001) (120136, 5.004e+001) (132313, 5.003e+001) (145389, 5.203e+001) (159407, 5.203e+001) (174411, 5.202e+001) (190446, 5.202e+001) (207558, 5.202e+001) (225794, 5.402e+001) (245202, 5.402e+001) (265831, 5.402e+001) (287731, 5.402e+001) (310953, 5.402e+001) (335549, 5.402e+001) (361572, 5.401e+001) (389076, 5.601e+001) (418116, 5.601e+001) (448748, 5.601e+001) (481029, 5.601e+001) (515017, 5.601e+001) (550771, 5.601e+001) (588351, 5.601e+001) 
 };
  \addplot coordinates {
(48, 2.021e+000) (104, 1.213e+001) (192, 1.194e+001) (321, 1.396e+001) (501, 1.610e+001) (743, 1.400e+001) (1059, 1.407e+001) (1462, 1.601e+001) (1966, 1.599e+001) (2586, 1.599e+001) (3338, 2.201e+001) (4239, 1.805e+001) (5307, 2.000e+001) (6561, 1.600e+001) (8021, 2.202e+001) (9708, 2.000e+001) (11644, 2.000e+001) (13852, 2.202e+001) (16356, 2.400e+001) (19181, 2.000e+001) (22353, 2.200e+001) (25899, 2.600e+001) (29847, 2.000e+001) (34226, 2.201e+001) (39066, 2.400e+001) (44398, 2.200e+001) (50254, 2.200e+001) (56667, 2.800e+001) (63671, 2.600e+001) (71301, 2.600e+001) (79593, 2.800e+001) (88584, 3.000e+001) (98312, 2.600e+001) (108816, 2.600e+001) (120136, 3.000e+001) (132313, 2.400e+001) (145389, 2.600e+001) (159407, 2.800e+001) (174411, 2.600e+001) (190446, 2.800e+001) (207558, 2.800e+001) (225794, 2.800e+001) (245202, 2.600e+001) (265831, 2.800e+001) (287731, 3.000e+001) (310953, 3.000e+001) (335549, 3.000e+001) (361572, 2.800e+001) (389076, 3.200e+001) (418116, 3.000e+001) (448748, 2.800e+001) (481029, 2.800e+001) (515017, 3.200e+001) (550771, 3.000e+001) (588351, 2.600e+001) 
  };
  \addplot coordinates {
(48, 6.229e+000) (104, 8.529e+000) (192, 1.029e+001) (321, 1.215e+001) (501, 1.429e+001) (743, 1.833e+001) (1059, 1.621e+001) (1462, 1.414e+001) (1966, 1.617e+001) (2586, 1.812e+001) (3338, 1.606e+001) (4239, 2.014e+001) (5307, 2.209e+001) (6561, 2.005e+001) (8021, 2.004e+001) (9708, 2.002e+001) (11644, 2.003e+001) (13852, 2.004e+001) (16356, 1.802e+001) (19181, 2.004e+001) (22353, 2.404e+001) (25899, 2.202e+001) (29847, 2.001e+001) (34226, 2.402e+001) (39066, 3.002e+001) (44398, 2.401e+001) (50254, 2.401e+001) (56667, 2.802e+001) (63671, 2.402e+001) (71301, 2.601e+001) (79593, 2.801e+001) (88584, 2.401e+001) (98312, 2.601e+001) (108816, 2.601e+001) (120136, 2.601e+001) (132313, 2.801e+001) (145389, 2.601e+001) (159407, 3.001e+001) (174411, 2.601e+001) (190446, 2.801e+001) (207558, 2.801e+001) (225794, 2.800e+001) (245202, 2.801e+001) (265831, 3.001e+001) (287731, 2.600e+001) (310953, 2.800e+001) (335549, 2.800e+001) (361572, 3.000e+001) (389076, 2.800e+001) (418116, 3.000e+001) (448748, 3.200e+001) (481029, 2.800e+001) (515017, 3.200e+001) (550771, 2.800e+001) (588351, 3.200e+001) 
  };
  \addplot coordinates {
(48, 2.021e+000) (104, 2.087e+000) (192, 2.214e+000) (321, 2.464e+000) (501, 2.677e+000) (743, 2.693e+000) (1059, 2.745e+000) (1462, 2.839e+000) (1966, 2.840e+000) (2586, 2.878e+000) (3338, 2.886e+000) (4239, 2.965e+000) (5307, 2.988e+000) (6561, 3.030e+000) (8021, 3.040e+000) (9708, 3.098e+000) (11644, 3.057e+000) (13852, 3.058e+000) (16356, 3.081e+000) (19181, 3.105e+000) (22353, 3.142e+000) (25899, 3.113e+000) (29847, 3.159e+000) (34226, 3.123e+000) (39066, 3.181e+000) (44398, 3.172e+000) (50254, 3.187e+000) (56667, 3.187e+000) (63671, 3.196e+000) (71301, 3.180e+000) (79593, 3.201e+000) (88584, 3.197e+000) (98312, 3.203e+000) (108816, 3.210e+000) (120136, 3.220e+000) (132313, 3.225e+000) (145389, 3.212e+000) (159407, 3.230e+000) (174411, 3.219e+000) (190446, 3.226e+000) (207558, 3.228e+000) (225794, 3.212e+000) (245202, 3.238e+000) (265831, 3.240e+000) (287731, 3.247e+000) (310953, 3.235e+000) (335549, 3.239e+000) (361572, 3.239e+000) (389076, 3.232e+000) (418116, 3.240e+000) (448748, 3.256e+000) (481029, 3.248e+000) (515017, 3.253e+000) (550771, 3.250e+000) (588351, 3.254e+000) 
  };
  \addplot coordinates {
(48, 2.229e+000) (104, 2.087e+000) (192, 2.474e+000) (321, 2.321e+000) (501, 2.573e+000) (743, 2.629e+000) (1059, 2.811e+000) (1462, 2.886e+000) (1966, 2.922e+000) (2586, 2.930e+000) (3338, 2.934e+000) (4239, 2.920e+000) (5307, 3.020e+000) (6561, 3.040e+000) (8021, 3.003e+000) (9708, 3.090e+000) (11644, 3.074e+000) (13852, 3.050e+000) (16356, 3.110e+000) (19181, 3.116e+000) (22353, 3.139e+000) (25899, 3.131e+000) (29847, 3.152e+000) (34226, 3.143e+000) (39066, 3.107e+000) (44398, 3.169e+000) (50254, 3.150e+000) (56667, 3.184e+000) (63671, 3.167e+000) (71301, 3.186e+000) (79593, 3.191e+000) (88584, 3.191e+000) (98312, 3.202e+000) (108816, 3.219e+000) (120136, 3.199e+000) (132313, 3.228e+000) (145389, 3.224e+000) (159407, 3.224e+000) (174411, 3.212e+000) (190446, 3.234e+000) (207558, 3.228e+000) (225794, 3.219e+000) (245202, 3.237e+000) (265831, 3.238e+000) (287731, 3.242e+000) (310953, 3.245e+000) (335549, 3.247e+000) (361572, 3.230e+000) (389076, 3.244e+000) (418116, 3.252e+000) (448748, 3.248e+000) (481029, 3.252e+000) (515017, 3.246e+000) (550771, 3.252e+000) (588351, 3.251e+000) 
  };
\node  at (axis description cs:0,0) [
    text width=31pt, anchor=north east, align=center,xshift=14pt,yshift=1.5pt] {\scriptsize $|H_{4}^d|$};
\node  at (axis description cs:0,0) [
    text width=31pt, anchor=north east, align=center,xshift=14pt,yshift=-11.5pt] {\scriptsize $d$};
 \end{semilogxaxis}
\end{tikzpicture}
\;
\begin{tikzpicture}[baseline=(current axis.south)]
\begin{semilogxaxis}[
    clip=false,
    scale only axis,
    xmin=84,xmax=10000000,ymin=0,ymax=75,
    xtick={272, 1683, 11584, 85520, 704073, 6300482},
    x tick label style={align=center, font=\scriptsize},
    y tick label style={font=\scriptsize},
	xticklabels={{272\\[0.3em]3},{1\,683\\[0.3em]6},{11\,584\\[0.3em]11},{85\,520\\[0.3em]19},{704\,073\\[0.3em]32},{6\,300\,482\\[0.3em]53}},
    xlabel style={align=center,
    font=\footnotesize},
    title style={font=\footnotesize},
    title={$n=5$},
    width=0.4\textwidth,
    cycle list name=MR1LOF
    ]
  \addplot coordinates {
(112, 2.181e+01) (272, 2.610e+01) (552, 2.995e+01) (1002, 3.197e+01) (1683, 3.404e+01) (2668, 3.607e+01) (4043, 3.600e+01) (5908, 3.801e+01) (8378, 4.000e+01) (11584, 4.200e+01) (15674, 4.201e+01) (20814, 4.400e+01) (27189, 4.400e+01) (35004, 4.600e+01) (44485, 4.600e+01) (55880, 4.800e+01) (69460, 4.800e+01) (85520, 5.000e+01) (104380, 5.000e+01) (126386, 5.000e+01) (151911, 5.200e+01) (181356, 5.200e+01) (215151, 5.200e+01) (253756, 5.400e+01) (297662, 5.400e+01) (347392, 5.400e+01) (403502, 5.600e+01) (466582, 5.600e+01) (537257, 5.600e+01) (616188, 5.800e+01) (704073, 5.800e+01) (801648, 5.800e+01) (909688, 5.800e+01) (1029008, 6.000e+01) (1160464, 6.000e+01) (1304954, 6.000e+01) (1463419, 6.000e+01) (1636844, 6.200e+01) (1826259, 6.200e+01) (2032740, 6.200e+01) (2257410, 6.200e+01) (2501440, 6.200e+01) (2766050, 6.400e+01) (3052510, 6.400e+01) (3362141, 6.400e+01) (3696316, 6.400e+01) (4056461, 6.400e+01) (4444056, 6.600e+01) (4860636, 6.600e+01) (5307792, 6.600e+01) (5787172, 6.600e+01) (6300482, 6.600e+01) (6849487, 6.600e+01) (7436012, 6.800e+01) (8061943, 6.800e+01) 
  };
  \addplot coordinates {
(112, 2.404e+01) (272, 2.790e+01) (552, 3.152e+01) (1002, 3.289e+01) (1683, 3.479e+01) (2668, 3.655e+01) (4043, 3.632e+01) (5908, 3.825e+01) (8378, 4.027e+01) (11584, 4.218e+01) (15674, 4.219e+01) (20814, 4.412e+01) (27189, 4.408e+01) (35004, 4.608e+01) (44485, 4.605e+01) (55880, 4.805e+01) (69460, 4.807e+01) (85520, 5.004e+01) (104380, 5.004e+01) (126386, 5.003e+01) (151911, 5.203e+01) (181356, 5.203e+01) (215151, 5.202e+01) (253756, 5.402e+01) (297662, 5.402e+01) (347392, 5.401e+01) (403502, 5.601e+01) (466582, 5.601e+01) (537257, 5.601e+01) (616188, 5.801e+01) (704073, 5.801e+01) (801648, 5.801e+01) (909688, 5.801e+01) (1029008, 6.001e+01) (1160464, 6.000e+01) (1304954, 6.001e+01) (1463419, 6.001e+01) (1636844, 6.200e+01) (1826259, 6.200e+01) (2032740, 6.200e+01) (2257410, 6.200e+01) (2501440, 6.200e+01) (2766050, 6.400e+01) (3052510, 6.400e+01) (3362141, 6.400e+01) (3696316, 6.400e+01) (4056461, 6.400e+01) (4444056, 6.600e+01) (4860636, 6.600e+01) (5307792, 6.600e+01) (5787172, 6.600e+01) (6300482, 6.600e+01) (6849487, 6.600e+01) (7436012, 6.800e+01) (8061943, 6.800e+01) 
 };
  \addplot coordinates {
(112, 3.973e+00) (272, 1.004e+01) (552, 1.398e+01) (1002, 1.399e+01) (1683, 1.802e+01) (2668, 1.603e+01) (4043, 1.800e+01) (5908, 1.801e+01) (8378, 2.000e+01) (11584, 2.400e+01) (15674, 2.401e+01) (20814, 2.400e+01) (27189, 2.600e+01) (35004, 2.200e+01) (44485, 2.200e+01) (55880, 2.400e+01) (69460, 2.600e+01) (85520, 2.600e+01) (104380, 2.800e+01) (126386, 2.600e+01) (151911, 2.800e+01) (181356, 2.800e+01) (215151, 2.600e+01) (253756, 2.600e+01) (297662, 2.800e+01) (347392, 2.800e+01) (403502, 2.800e+01) (466582, 2.800e+01) (537257, 2.800e+01) (616188, 2.800e+01) (704073, 2.800e+01) (801648, 3.200e+01) (909688, 2.800e+01) (1029008, 3.000e+01) (1160464, 3.000e+01) (1304954, 3.200e+01) (1463419, 3.200e+01) (1636844, 3.600e+01) (1826259, 3.200e+01) (2032740, 3.000e+01) (2257410, 3.200e+01) (2501440, 3.200e+01) (2766050, 3.400e+01) (3052510, 3.400e+01) (3362141, 3.600e+01) (3696316, 3.200e+01) (4056461, 3.200e+01) (4444056, 3.200e+01) (4860636, 3.400e+01) (5307792, 3.400e+01) (5787172, 3.400e+01) (6300482, 3.600e+01) (6849487, 3.400e+01) (7436012, 3.400e+01) (8061943, 3.200e+01) 
  };
  \addplot coordinates {
(112, 6.045e+00) (272, 1.458e+01) (552, 1.219e+01) (1002, 1.414e+01) (1683, 1.823e+01) (2668, 2.020e+01) (4043, 2.008e+01) (5908, 2.007e+01) (8378, 1.806e+01) (11584, 2.003e+01) (15674, 2.407e+01) (20814, 2.203e+01) (27189, 2.402e+01) (35004, 2.603e+01) (44485, 2.401e+01) (55880, 2.601e+01) (69460, 2.602e+01) (85520, 2.801e+01) (104380, 2.601e+01) (126386, 2.601e+01) (151911, 2.601e+01) (181356, 3.001e+01) (215151, 2.601e+01) (253756, 2.400e+01) (297662, 3.001e+01) (347392, 2.800e+01) (403502, 3.200e+01) (466582, 2.800e+01) (537257, 3.200e+01) (616188, 2.800e+01) (704073, 3.200e+01) (801648, 3.200e+01) (909688, 3.000e+01) (1029008, 3.200e+01) (1160464, 3.200e+01) (1304954, 3.200e+01) (1463419, 3.200e+01) (1636844, 3.000e+01) (1826259, 3.600e+01) (2032740, 3.200e+01) (2257410, 3.200e+01) (2501440, 3.400e+01) (2766050, 3.200e+01) (3052510, 3.200e+01) (3362141, 3.600e+01) (3696316, 3.200e+01) (4056461, 3.400e+01) (4444056, 3.600e+01) (4860636, 3.400e+01) (5307792, 3.400e+01) (5787172, 3.400e+01) (6300482, 3.400e+01) (6849487, 3.400e+01) (7436012, 3.600e+01) (8061943, 3.600e+01) 
  };
  \addplot coordinates {
(112, 2.152e+00) (272, 2.261e+00) (552, 2.524e+00) (1002, 2.698e+00) (1683, 2.712e+00) (2668, 2.769e+00) (4043, 2.901e+00) (5908, 2.820e+00) (8378, 2.987e+00) (11584, 3.034e+00) (15674, 3.029e+00) (20814, 3.069e+00) (27189, 3.055e+00) (35004, 3.069e+00) (44485, 3.114e+00) (55880, 3.158e+00) (69460, 3.113e+00) (85520, 3.141e+00) (104380, 3.151e+00) (126386, 3.123e+00) (151911, 3.186e+00) (181356, 3.191e+00) (215151, 3.215e+00) (253756, 3.200e+00) (297662, 3.198e+00) (347392, 3.209e+00) (403502, 3.205e+00) (466582, 3.216e+00) (537257, 3.213e+00) (616188, 3.226e+00) (704073, 3.245e+00) (801648, 3.248e+00) (909688, 3.251e+00) (1029008, 3.242e+00) (1160464, 3.243e+00) (1304954, 3.249e+00) (1463419, 3.251e+00) (1636844, 3.263e+00) (1826259, 3.258e+00) (2032740, 3.261e+00) (2257410, 3.262e+00) (2501440, 3.262e+00) (2766050, 3.260e+00) (3052510, 3.262e+00) (3362141, 3.266e+00) (3696316, 3.271e+00) (4056461, 3.271e+00) (4444056, 3.272e+00) (4860636, 3.272e+00) (5307792, 3.273e+00) (5787172, 3.275e+00) (6300482, 3.275e+00) (6849487, 3.276e+00) (7436012, 3.275e+00) (8061943, 3.276e+00) 
  };
  \addplot coordinates {
(112, 2.152e+00) (272, 2.232e+00) (552, 2.418e+00) (1002, 2.568e+00) (1683, 2.749e+00) (2668, 2.627e+00) (4043, 2.868e+00) (5908, 2.925e+00) (8378, 2.979e+00) (11584, 3.041e+00) (15674, 3.004e+00) (20814, 3.016e+00) (27189, 3.046e+00) (35004, 3.110e+00) (44485, 3.098e+00) (55880, 3.136e+00) (69460, 3.133e+00) (85520, 3.158e+00) (104380, 3.154e+00) (126386, 3.165e+00) (151911, 3.177e+00) (181356, 3.176e+00) (215151, 3.199e+00) (253756, 3.202e+00) (297662, 3.204e+00) (347392, 3.210e+00) (403502, 3.219e+00) (466582, 3.227e+00) (537257, 3.218e+00) (616188, 3.235e+00) (704073, 3.223e+00) (801648, 3.235e+00) (909688, 3.235e+00) (1029008, 3.242e+00) (1160464, 3.247e+00) (1304954, 3.255e+00) (1463419, 3.243e+00) (1636844, 3.260e+00) (1826259, 3.256e+00) (2032740, 3.257e+00) (2257410, 3.254e+00) (2501440, 3.265e+00) (2766050, 3.273e+00) (3052510, 3.263e+00) (3362141, 3.261e+00) (3696316, 3.268e+00) (4056461, 3.269e+00) (4444056, 3.271e+00) (4860636, 3.272e+00) (5307792, 3.267e+00) (5787172, 3.271e+00) (6300482, 3.272e+00) (6849487, 3.278e+00) (7436012, 3.274e+00) (8061943, 3.277e+00) 
  };
\node  at (axis description cs:0,0) [
    text width=31pt, anchor=north east, align=center,xshift=14pt,yshift=1.5pt] {\scriptsize $|H_{5}^d|$};
\node  at (axis description cs:0,0) [
    text width=31pt, anchor=north east, align=center,xshift=14pt,yshift=-11.5pt] {\scriptsize $d$};
 \end{semilogxaxis}
\end{tikzpicture}
\\[1em]
\ref{leghypcrossnfix}
\caption{Oversampling factors of reconstructing multiple rank\mbox{-}1 lattices for
hyperbolic cross frequency sets $H_n^d$, $n=2,3,4,5$ fixed.}\label{fig:numtests_hypcross_nfix}
\end{figure}
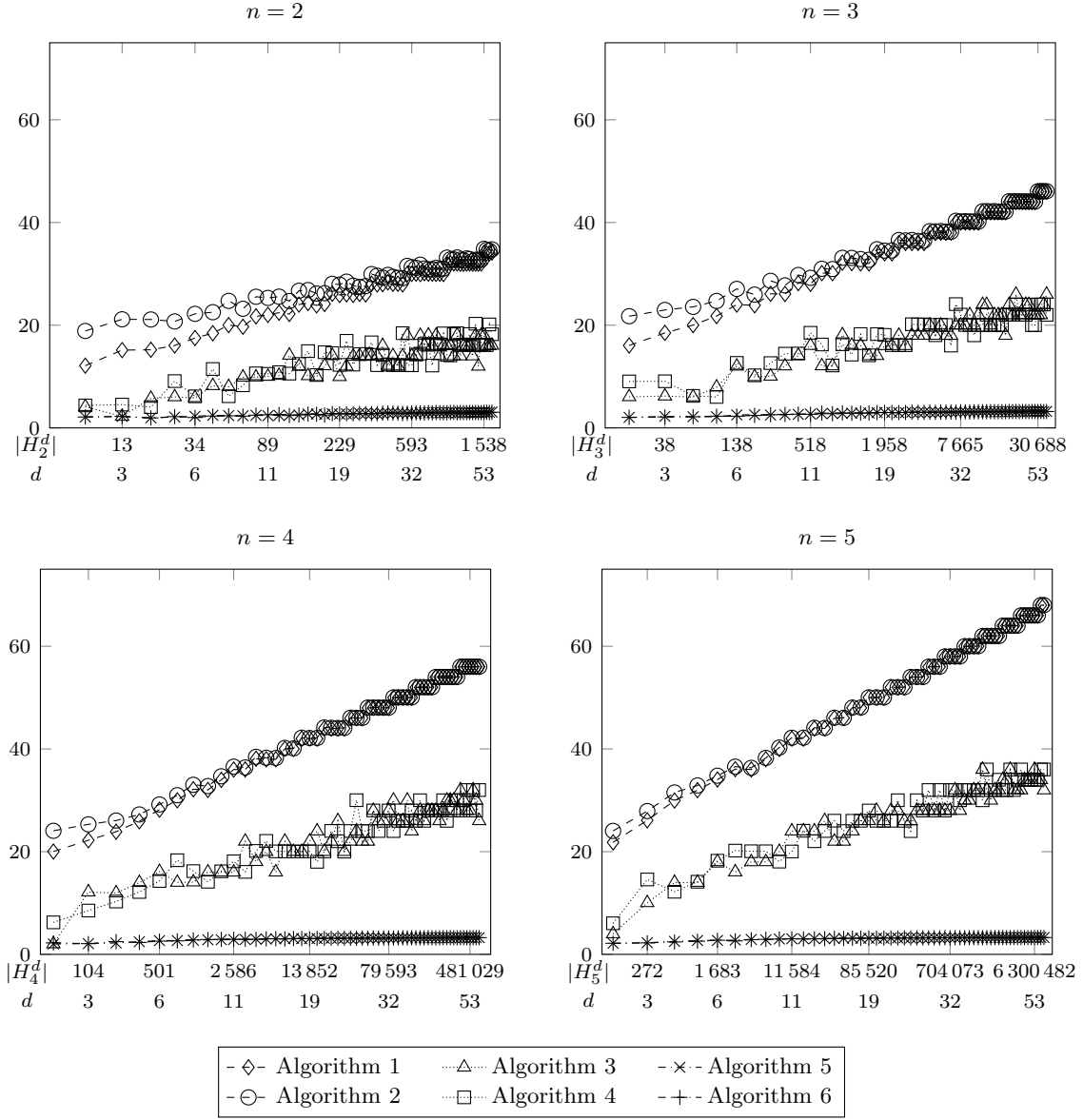

\begin{figure}[tb]
\centering
\begin{tikzpicture}
\begin{loglogaxis}[
    clip=false,
    scale only axis,
    xmin=100,xmax=20000, ymin=0.9, ymax=10,
    title={Condition numbers},
    xtick={272,1683,11584},
    ytick={1,2,5,7,9},
    x tick label style={align=center,font=\scriptsize},
    y tick label style={font=\scriptsize},
    xticklabels={{272\\[0.3em]3},{1\,683\\[0.3em]6},{11\,584\\[0.3em]11}},
    yticklabels={1,2,5,7,9},
    width=0.35\textwidth,
    height=0.25\textwidth,
    every axis legend/.append style={nodes={right}},
    legend entries={Algorithm \ref{alg:construct_mr1l_uniform}, Algorithm \ref{alg:construct_mr1l_uniform_different_primes},Algorithm \ref{alg:construct_mr1l_I}, Algorithm \ref{alg:construct_mr1l_I_distinct_primes}, Algorithm \ref{alg:construct_mr1l_3}, Algorithm \ref{alg:construct_mr1l_4}
    },
    legend pos=outer north east,
    legend style={font=\footnotesize, /tikz/every even column/.append style={column sep=0.5cm}},
    title style={font=\footnotesize},
    cycle list name=MR1LOF
    ]
  \addplot coordinates {
(112, 1.425e+00) (272, 1.400e+00) (552, 1.478e+00) (1002, 1.466e+00) (1683, 1.560e+00) (2668, 1.418e+00) (4043, 1.426e+00) (5908, 1.428e+00) (8378, 1.401e+00) (11584, 1.391e+00) (15674, 1.398e+00) 
  };
  \addplot coordinates {
(112, 1.398e+00) (272, 1.442e+00) (552, 1.427e+00) (1002, 1.471e+00) (1683, 1.426e+00) (2668, 1.429e+00) (4043, 1.468e+00) (5908, 1.443e+00) (8378, 1.403e+00) (11584, 1.424e+00) (15674, 1.418e+00) 
 };
  \addplot coordinates {
(112, 1.732e+00) (272, 1.869e+00) (552, 1.943e+00) (1002, 1.714e+00) (1683, 1.709e+00) (2668, 1.766e+00) (4043, 1.745e+00) (5908, 1.710e+00) (8378, 1.691e+00) (11584, 1.675e+00) (15674, 1.619e+00) 
  };
  \addplot coordinates {
(112, 1.822e+00) (272, 1.716e+00) (552, 1.831e+00) (1002, 1.860e+00) (1683, 1.745e+00) (2668, 1.685e+00) (4043, 1.658e+00) (5908, 1.707e+00) (8378, 1.720e+00) (11584, 1.716e+00) (15674, 1.580e+00) 
  };
  \addplot coordinates {
(112, 6.348e+00) (272, 8.385e+00) (552, 6.202e+00) (1002, 8.842e+00) (1683, 8.701e+00) (2668, 6.965e+00) (4043, 7.521e+00) (5908, 8.023e+00) (8378, 8.001e+00) (11584, 7.254e+00) (15674, 7.191e+00) 
  };
  \addplot coordinates {
(112, 6.348e+00) (272, 8.697e+00) (552, 7.645e+00) (1002, 7.751e+00) (1683, 7.060e+00) (2668, 8.960e+00) (4043, 7.927e+00) (5908, 7.672e+00) (8378, 7.805e+00) (11584, 8.488e+00) (15674, 7.058e+00) 
  };
\node  at (axis description cs:0,0) [
    text width=31pt, anchor=north east, align=center,xshift=14pt,yshift=1.5pt
    ]
    {\scriptsize $|H_{5}^d|$};
\node  at (axis description cs:0,0) [
    text width=31pt, anchor=north east, align=center,xshift=14pt,yshift=-11.5pt] {\scriptsize $d$};
 \end{loglogaxis}
\end{tikzpicture}
\caption{Condition numbers of Fourier matrices $\zb A(\Lambda(\zb z_1,M_1,\ldots,\zb z_s,M_s),H_5^d)$, $d=2,\ldots, 12$, for different reconstructing multiple rank\mbox{-}1 lattices $\Lambda(\zb z_1,M_1,\ldots,\zb z_s,M_s)$.}
\label{fig:numtests_hypcross_n5_cond}
\end{figure}
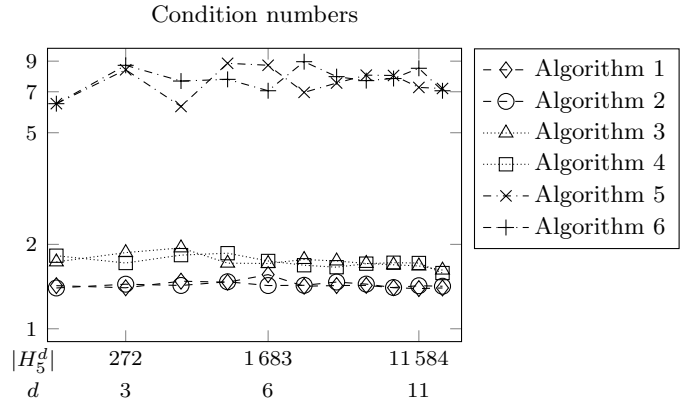

We consider dyadic hyperbolic crosses 
$$
I=H_n^d:=\bigcup_{\|\zb j\|_1=n}G_{j_1}\times\ldots\times G_{j_d},\qquad G_{j}=(2^{j-1},2^{j-1}]\cap\Z,
$$
as frequency sets $I$ and construct reconstructing multiple rank\mbox{-}1 lattices using Algorithms~\ref{alg:construct_mr1l_uniform} to \ref{alg:construct_mr1l_4} with fixed parameters $c=2$, $\delta=0.5$. The parameters $T$, $d$, and $N$ in Algorithms \ref{alg:construct_mr1l_uniform} and \ref{alg:construct_mr1l_uniform_different_primes} are determined by the frequency sets
$H_n^d$.
Similar to the considerations in \cite{Kae16}, we focus on two different settings.

First we fix the dimension $d=6$ and increase the refinement $n$ up to twelve.
The results are collected in the first rows of Table \ref{tab:numtests_hypcross}.
In addition, Figure \ref{fig:numtests_hypcross_d6} shows the oversampling factors of the constructed
multiple rank\mbox{-}1 lattices and some condition numbers. The constant slopes in the plots of the oversampling factors
are caused by logarithmic dependencies on the cardinality of the frequency set $I$.
Similar to the observations in \cite{Kae16} and even better than our theoretical results, the oversampling factors
of the multiple rank\mbox{-}1 lattices that are constructed using Algorithms \ref{alg:construct_mr1l_3} and \ref{alg:construct_mr1l_4}
seem to stagnate.

\begin{table}[tb]
\centering
\setlength{\tabcolsep}{2em}
\begin{tabular}{l|ccccc}
$d$&6&50&42&18&10\\
$n$&7&2&3&4&5\\
$|I|$&16\,172&1\,376&16\,080&11\,644&8\,378\\
$\operatorname{cond}(\zb A)$&1.8e4&3.5e5&1.6e7&1.3e6&1.1e5
\end{tabular}
\caption{Condition numbers of Fourier matrices $\zb A(\mathcal{X},H_n^d)$, $\mathcal{X}$ are the related sparse grids, for selected parameters $d$ and $n$.}\label{fig:cond_sg}
\end{table}

\begin{sidewaystable}[tb]
\centering
\scalebox{0.75}{
\begin{tabular}{crrr|rrr|rrr|rrr|rrr|rrr|rrr|}
&&&&
\multicolumn{3}{c|}{Algorithm \ref{alg:construct_mr1l_uniform}}&
\multicolumn{3}{c|}{Algorithm \ref{alg:construct_mr1l_uniform_different_primes}}&
\multicolumn{3}{c|}{Algorithm \ref{alg:construct_mr1l_I}}&
\multicolumn{3}{c|}{Algorithm \ref{alg:construct_mr1l_I_distinct_primes}}&
\multicolumn{3}{c|}{Algorithm \ref{alg:construct_mr1l_3}}&
\multicolumn{3}{c|}{Algorithm \ref{alg:construct_mr1l_4}}
\\
&$d$ &$n$ &$|I|$
& $s$ & $M/|I|$ & $\cond(\zb A)$
& $s$ & $M/|I|$ & $\cond(\zb A)$
& $\tilde{s}$ & $M/|I|$ & $\cond(\zb A)$
& $\tilde{s}$ & $M/|I|$ & $\cond(\zb A)$
& $\ell$ & $M/|I|$ & $\cond(\zb A)$
& $\ell$ & $M/|I|$ & $\cond(\zb A)$\\
\cmidrule(r){1-4}
\cmidrule(lr){5-7}
\cmidrule(lr){8-10}
\cmidrule(lr){11-13}
\cmidrule(lr){14-16}
\cmidrule(lr){17-19}
\cmidrule(lr){20-22}
\multirow{11}{*}{\begin{sideways} dimension  $d=6$ \end{sideways}}
& 6 & 1 & 7 & 6 & 10.4 & 1.9 &  6 & 18.1 & 1.2 &  2 & 3.6 & 2.2 &  1 & 1.9 & 1.0 &  1 & 1.9 & 1.0 &  1 & 1.9 & 1.0\\
& 6 & 2 & 34 & 9 & 17.5 & 1.6 &  9 & 22.2 & 1.3 &  5 & 9.7 & 2.1 &  5 & 10.9 & 1.9 &  2 & 2.1 & 6.0 &  2 & 2.0 & 8.2\\
& 6 & 3 & 138 & 12 & 24.0 & 1.5 &  12 & 27.1 & 1.5 &  5 & 10.0 & 2.0 &  6 & 12.7 & 2.1 &  3 & 2.3 & 7.9 &  3 & 2.4 & 7.8\\
& 6 & 4 & 501 & 14 & 28.2 & 1.5 &  14 & 29.2 & 1.5 &  7 & 14.1 & 2.6 &  6 & 12.2 & 1.9 &  4 & 2.5 & 7.8 &  4 & 2.6 & 8.8\\
& 6 & 5 & 1\,683 & 17 & 34.0 & 1.4 &  17 & 34.8 & 1.4 &  11 & 22.0 & 1.8 &  8 & 16.2 & 1.7 &  5 & 2.7 & 7.9 &  5 & 2.7 & 9.0\\
& 6 & 6 & 5\,336 & 19 & 38.1 & 1.4 &  19 & 38.4 & 1.4 &  8 & 16.0 & 1.8 &  11 & 22.1 & 1.6 &  6 & 2.8 & 8.1 &  6 & 2.9 & 7.9\\
& 6 & 7 & 16\,172 & 21 & 42.0 & 1.4 &  21 & 42.1 & 1.4 &  9 & 18.0 & 1.7 &  10 & 20.0 & 1.8 &  7 & 2.8 & 8.1 &  7 & 2.8 & 8.5\\
& 6 & 8 & 47\,264 & 23 & 46.0 & -- &  23 & 46.1 & -- &  12 & 24.0 & -- &  14 & 28.0 & -- &  9 & 3.0 & -- &  8 & 3.0 & --\\
& 6 & 9 & 134\,048 & 25 & 50.0 & -- &  25 & 50.0 & -- &  13 & 26.0 & -- &  13 & 26.0 & -- &  10 & 3.0 & -- &  10 & 3.0 & --\\
& 6 & 10 & 370\,688 & 28 & 56.0 & -- &  28 & 56.0 & -- &  19 & 38.0 & -- &  13 & 26.0 & -- &  11 & 3.0 & -- &  11 & 3.0 & --\\
& 6 & 11 & 1\,003\,136 & 30 & 60.0 & -- &  30 & 60.0 & -- &  15 & 30.0 & -- &  17 & 34.0 & -- &  12 & 3.0 & -- &  12 & 3.1 & --\\
& 6 & 12 & 2\,664\,192 & 31 & 62.0 & -- &  31 & 62.0 & -- &  16 & 32.0 & -- &  19 & 38.0 & -- &  13 & 3.1 & -- &  13 & 3.1 & --\\
\cmidrule(r){1-4}
\cmidrule(lr){5-7}
\cmidrule(lr){8-10}
\cmidrule(lr){11-13}
\cmidrule(lr){14-16}
\cmidrule(lr){17-19}
\cmidrule(lr){20-22}
\multirow{7}{*}{\begin{sideways} expansion N=3\end{sideways}}
& 2 & 2 & 8 &6 & 12.1 & 1.5 & 6 & 18.9 & 1.2 & 2 & 4.1 & 1.7 & 2 & 4.4 & 1.7 & 1 & 2.1 & 1.0 & 1 & 2.1 & 1.0\\
& 10 & 2 & 76 &11 & 21.7 & 1.7 & 11 & 25.5 & 1.7 & 5 & 9.9 & 2.1 & 5 & 10.6 & 1.9 & 2 & 2.5 & 4.6 & 3 & 2.5 & 6.8\\
& 18 & 2 & 208 &13 & 26.1 & 1.5 & 13 & 28.1 & 1.6 & 7 & 14.1 & 1.9 & 7 & 14.6 & 1.8 & 4 & 2.8 & 7.3 & 3 & 2.8 & 6.2\\
& 26 & 2 & 404 &14 & 28.0 & 1.5 & 14 & 29.3 & 1.5 & 8 & 16.0 & 1.8 & 7 & 14.2 & 1.9 & 4 & 2.8 & 6.7 & 4 & 2.9 & 7.9\\
& 34 & 2 & 664 &15 & 30.0 & 1.5 & 15 & 31.8 & 1.5 & 9 & 18.0 & 1.7 & 7 & 14.5 & 1.8 & 5 & 3.0 & 6.7 & 5 & 2.9 & 6.9\\
& 42 & 2 & 988 &16 & 32.0 & 1.4 & 16 & 32.8 & 1.5 & 8 & 16.0 & 1.8 & 7 & 14.1 & 1.9 & 5 & 3.0 & 6.9 & 5 & 3.0 & 7.2\\
& 50 & 2 & 1\,376 &16 & 32.0 & 1.5 & 16 & 32.7 & 1.5 & 9 & 18.0 & 1.7 & 10 & 20.3 & 1.7 & 5 & 3.0 & 6.8 & 5 & 3.0 & 7.8\\
\cmidrule(r){1-4}
\cmidrule(lr){5-7}
\cmidrule(lr){8-10}
\cmidrule(lr){11-13}
\cmidrule(lr){14-16}
\cmidrule(lr){17-19}
\cmidrule(lr){20-22}
\multirow{7}{*}{\begin{sideways} expansion N=7\end{sideways}}
& 2 & 3 & 20 &8 & 16.1 & 1.4 & 8 & 21.8 & 1.4 & 3 & 6.0 & 1.7 & 4 & 9.1 & 1.7 & 1 & 2.0 & 1.0 & 1 & 2.0 & 1.0\\
& 10 & 3 & 416 &14 & 28.2 & 1.4 & 14 & 29.8 & 1.6 & 7 & 14.1 & 1.9 & 7 & 14.5 & 1.7 & 4 & 2.6 & 7.5 & 4 & 2.7 & 9.5\\
& 18 & 3 & 1\,708 &17 & 34.2 & 1.4 & 17 & 34.8 & 1.5 & 7 & 14.1 & 1.9 & 9 & 18.3 & 1.7 & 6 & 3.0 & 7.1 & 6 & 3.0 & 7.7\\
& 26 & 3 & 4\,408 &19 & 38.0 & 1.4 & 19 & 38.3 & 1.4 & 10 & 20.0 & 1.7 & 10 & 20.1 & 1.7 & 6 & 3.0 & 7.5 & 6 & 3.1 & 7.0\\
& 34 & 3 & 9\,028 &20 & 40.0 & 1.4 & 20 & 40.2 & 1.4 & 11 & 22.0 & 1.6 & 10 & 20.0 & 1.7 & 7 & 3.1 & 7.2 & 7 & 3.1 & 6.8\\
& 42 & 3 & 16\,080 &21 & 42.0 & 1.4 & 21 & 42.1 & 1.4 & 11 & 22.0 & 1.7 & 10 & 20.0 & 1.7 & 8 & 3.2 & 7.4 & 8 & 3.2 & 7.2\\
& 50 & 3 & 26\,076 &22 & 44.0 & -- & 22 & 44.1 & -- & 12 & 24.0 & -- & 12 & 24.0 & -- & 9 & 3.2 & -- & 8 & 3.2 & --\\
\cmidrule(r){1-4}
\cmidrule(lr){5-7}
\cmidrule(lr){8-10}
\cmidrule(lr){11-13}
\cmidrule(lr){14-16}
\cmidrule(lr){17-19}
\cmidrule(lr){20-22}
\multirow{7}{*}{\begin{sideways} expansion N=15\end{sideways}}
& 2 & 4 & 48 &10 & 20.0 & 1.4 & 10 & 24.1 & 1.4 & 1 & 2.0 & 1.0 & 3 & 6.2 & 1.9 & 1 & 2.0 & 1.0 & 2 & 2.2 & 5.4\\
& 10 & 4 & 1\,966 &17 & 34.0 & 1.5 & 17 & 34.7 & 1.4 & 8 & 16.0 & 1.9 & 8 & 16.2 & 1.9 & 6 & 2.8 & 7.4 & 5 & 2.9 & 6.3\\
& 18 & 4 & 11\,644 &21 & 42.0 & 1.4 & 21 & 42.2 & 1.4 & 10 & 20.0 & 1.7 & 10 & 20.0 & 1.8 & 7 & 3.1 & 7.0 & 7 & 3.1 & 7.5\\
& 26 & 4 & 39\,066 &23 & 46.0 & -- & 23 & 46.1 & -- & 12 & 24.0 & -- & 15 & 30.0 & -- & 9 & 3.2 & -- & 9 & 3.1 & --\\
& 34 & 4 & 98\,312 &25 & 50.0 & -- & 25 & 50.0 & -- & 13 & 26.0 & -- & 13 & 26.0 & -- & 10 & 3.2 & -- & 10 & 3.2 & --\\
& 42 & 4 & 207\,558 &26 & 52.0 & -- & 26 & 52.0 & -- & 14 & 28.0 & -- & 14 & 28.0 & -- & 10 & 3.2 & -- & 10 & 3.2 & --\\
& 50 & 4 & 389\,076 &28 & 56.0 & -- & 28 & 56.0 & -- & 16 & 32.0 & -- & 14 & 28.0 & -- & 11 & 3.2 & -- & 11 & 3.2 & --\\
\cmidrule(r){1-4}
\cmidrule(lr){5-7}
\cmidrule(lr){8-10}
\cmidrule(lr){11-13}
\cmidrule(lr){14-16}
\cmidrule(lr){17-19}
\cmidrule(lr){20-22}
\multirow{7}{*}{\begin{sideways} expansion N=31\end{sideways}}
& 2 & 5 & 112 &11 & 21.8 & 1.4 & 11 & 24.0 & 1.4 & 2 & 4.0 & 1.7 & 3 & 6.0 & 1.8 & 2 & 2.2 & 6.3 & 2 & 2.2 & 6.3\\
& 10 & 5 & 8\,378 &20 & 40.0 & 1.4 & 20 & 40.3 & 1.4 & 10 & 20.0 & 1.7 & 9 & 18.1 & 1.7 & 7 & 3.0 & 8.0 & 7 & 3.0 & 7.8\\
& 18 & 5 & 69\,460 &24 & 48.0 & -- & 24 & 48.1 & -- & 13 & 26.0 & -- & 13 & 26.0 & -- & 9 & 3.1 & -- & 9 & 3.1 & --\\
& 26 & 5 & 297\,662 &27 & 54.0 & -- & 27 & 54.0 & -- & 14 & 28.0 & -- & 15 & 30.0 & -- & 11 & 3.2 & -- & 11 & 3.2 & --\\
& 34 & 5 & 909\,688 &29 & 58.0 & -- & 29 & 58.0 & -- & 14 & 28.0 & -- & 15 & 30.0 & -- & 12 & 3.3 & -- & 12 & 3.2 & --\\
& 42 & 5 & 2\,257\,410 &31 & 62.0 & -- & 31 & 62.0 & -- & 16 & 32.0 & -- & 16 & 32.0 & -- & 13 & 3.3 & -- & 13 & 3.3 & --\\
& 50 & 5 & 4\,860\,636 &33 & 66.0 & -- & 33 & 66.0 & -- & 17 & 34.0 & -- & 17 & 34.0 & -- & 14 & 3.3 & -- & 14 & 3.3 & --\\
\end{tabular}
}
\caption{Numbers $s$, $\tilde{s}$, $\ell$ of joined rank\mbox{-}1 lattices
for the construction of reconstructing multiple rank\mbox{-}1 lattices for hyperbolic cross frequency sets of different dimensions $d$ and refinements $n$ using Algorithms \ref{alg:construct_mr1l_uniform} to \ref{alg:construct_mr1l_4}; oversampling factors $M/|I|$ and condition numbers $\cond(\zb A)$ of corresponding Fourier matrices, parameters $c=2$ and $\delta=0.5$ fixed.}
\label{tab:numtests_hypcross}
\end{sidewaystable}

Second, we fix the refinements $n=2,3,4,5$ and consider growing dimension $d$.
The lower rows of Table \ref{tab:numtests_hypcross} presents some
results of this numerical experiment. Figure \ref{fig:numtests_hypcross_nfix}
depicts the numerical tests in more detail.

In accordance to our theoretical results, we observe at most linearity in the oversampling factor $M/T$
with respect to $\log{T}$. Another interesting observation is that Algorithms \ref{alg:construct_mr1l_I} and
\ref{alg:construct_mr1l_I_distinct_primes} constructs multiple rank\mbox{-}1 lattices of lower cardinality than
Algorithm \ref{alg:construct_mr1l_uniform} and \ref{alg:construct_mr1l_uniform_different_primes},
which coincides with the considerations that led
to Algorithms \ref{alg:construct_mr1l_I} and \ref{alg:construct_mr1l_I_distinct_primes}, cf. the context of \eqref{eq:tilde_I}.
We observe that the application of Algorithms \ref{alg:construct_mr1l_3} and \ref{alg:construct_mr1l_4} result in the smallest oversampling factors in practice.
The oversampling factors are quite small (less than 3.3) and seem to stagnate, which means that the oversampling factors behave notably better than the theoretical results.
A similar behavior of the oversampling factors was already observed in \cite{Kae16}.

As mentioned above, we additionally computed the condition numbers of the Fourier matrices $\zb A(\Lambda(\zb z_1,M_1,\ldots,\zb z_s,M_s),I)$, where the frequency sets $I$ are of cardinality up to $20\,000$, cf. Figures \ref{fig:numtests_hypcross_d6} and \ref{fig:numtests_hypcross_n5_cond}.
We observed condition numbers that were
bounded by two, three, and twelve using the multiple rank\mbox{-}1 lattices
$\Lambda(\zb z_1,M_1,\ldots,\zb z_s,M_s)$ that are determined by Algorithms \ref{alg:construct_mr1l_uniform} and \ref{alg:construct_mr1l_uniform_different_primes}, Algorithms \ref{alg:construct_mr1l_I} and \ref{alg:construct_mr1l_I_distinct_primes}, and Algorithms \ref{alg:construct_mr1l_3} and \ref{alg:construct_mr1l_4}, respectively. In general, the tests does not allow for an reliable
interpretation. Nevertheless, we suppose that
the condition numbers do not substantially increase for growing
cardinality of the frequency set~$I$.

At this point, we would like to compare the obtained numerical results to
those for dyadic sparse grids, since these are the natural spatial discretizations for the considered dyadic
hyperbolic crosses in frequency domain, cf. \cite{Halla92}.
Due to the correlated construction of dyadic sparse grids and dyadic hyperbolic crosses,
the oversampling factor is exactly one and we focus on the condition number, which was investigated in
\cite{KaKu10}. Therein, the authors showed lower bounds on the condition numbers of the Fourier
matrices $\zb A(\mathcal{X},H_n^d)$, cf. \eqref{eq:Fourier_matrix_X}, where the spatial discretizations $\mathcal{X}$ are the related sparse grids.
These bounds increase for growing matrix dimensions, i.e., for growing refinement $n$ and growing
spatial dimension $d$, cf. \cite{KaKu10} for details. Besides the theoretical considerations,
numerically determined condition numbers were presented. Some of them are stated in Table \ref{fig:cond_sg}.
Even for matrices of less than $20\,000$ columns, one computes condition numbers up to several millions.
The numerically determined, outstanding condition numbers for multiple rank-1 lattice
spatial discretizations are substantially lower, cf. rows 9, 21, 27, 31, 37 of Table~\ref{tab:numtests_hypcross}, and consequently, preferable.

\section*{Conclusion}

The concept of sampling schemes that consists of a set of rank\mbox{-}1 lattices
allows for the reconstruction of multivariate trigonometric polynomials
with frequencies supported on arbitrary frequency sets $I\subset\Z^d$, $N_I\lesssim|I|$.
The oversampling factor, i.e., the ratio of the required number of used sampling values
to the number of frequencies, is bounded by terms that are logarithmic in 
the number of frequencies with a very high probability.

Numerical tests, cf. also \cite{Kae16}, indicate that even the condition numbers of
the corresponding Fourier matrices are bounded, which is in fact the crucial remaining
unproved characteristic.
Reasonable bounds on these condition numbers that coincide roughly
with the numerical observations might
imply excellent approximation properties and even
a highly improved Marcinkiewicz-type theorem for hyperbolic cross
trigonometric polynomials --- at least in $L_2$, cf. \cite{ByKaUlVo16} and \cite{Tem17}, respectively.

\subsection*{Acknowledgments}
The author thanks the referees for the valuable comments and suggestions and
gratefully acknowledges the funding by the Deutsche Forschungsgemeinschaft (DFG, German Research Foundation) -- 380648269.

{\scriptsize
\bibliographystyle{abbrv}

}
\end{document}